\theoremstyle{plain}
   \newtheorem{theorem}{Theorem}[section]
   \newtheorem{proposition}[theorem]{Proposition}
   \newtheorem{lemma}[theorem]{Lemma}
   \newtheorem{conjecture}[theorem]{Conjecture}
\theoremstyle{definition}
   \newtheorem{definition}[theorem]{Definition}
   \newtheorem{example}[theorem]{Example}
   \newtheorem{remark}[theorem]{Remark}
\numberwithin{equation}{section}
\newcommand{\CC}{{\mathbb {C}}}
\newcommand{\ZZ}{{\mathbb {Z}}}
\newcommand{\xx}{{\mathbf{x}}}
\DeclareMathOperator{\Mat}{Mat}
\DeclareMathOperator{\Gr}{Gr}
\newcommand\scalemath[2]{\scalebox{#1}{\mbox{\ensuremath{\displaystyle #2}}}}
\def\t{\widetilde}
\def\T{\mathbb{T}}
\def\A{\mathcal{A}}
\def\XX{\mathcal{X}}
\def\x{{\bf{x}}}
\def\p{{\bf{p}}}
\def\P{\mathbb{P}}
\def\S{\Sigma}
\begin{document}

\title{Quasi-homomorphisms of quantum cluster algebras}
\author{Wen Chang, Min Huang, Jian-Rong Li}

\address{Wen Chang, School of Mathematics and Statistics, Shaanxi Normal University, Xi'an, China}
\email{changwen161@163.com}
\address{Min Huang, School of Mathematics (Zhuhai), Sun Yat-sen University, Zhuhai, China}
\email{huangm97@mail.sysu.edu.cn}
\address{Jian-Rong Li, Faculty of Mathematics, University of Vienna, Oskar-Morgenstern-Platz 1, 1090 Vienna, Austria}
\email{lijr07@gmail.com}

\date{}

\maketitle

\begin{abstract}
In this paper, we study quasi-homomorphisms of quantum cluster algebras, which are quantum analogy of quasi-homomorphisms of cluster algebras introduced by Fraser.  

For a quantum Grassmannian cluster algebra $\CC_q[\Gr(k,n)]$, we show that there is an associated braid group and each generator $\sigma_i$ of the braid group preserves the quasi-commutative relations of quantum Pl\"{u}cker coordinates and exchange relations of the quantum Grassmannian cluster algebra. We conjecture that $\sigma_i$ also preserves $r$-term ($r \ge 4$) quantum Pl\"{u}cker relations of $\CC_q[\Gr(k,n)]$ and other relations which cannot be derived from quantum quantum Pl\"{u}cker relations (if any). Up to this conjecture, we show that $\sigma_i$ is a quasi-automorphism of $\CC_q[\Gr(k,n)]$ and the braid group acts on $\CC_q[\Gr(k,n)]$.
\end{abstract}

\tableofcontents\addtocontents{toc}{\setcounter{tocdepth}{1}}

\section{Introduction}
Cluster algebras were introduced by Fomin and Zelevinsky in \cite{FZ02}.
To study the symmetries of the cluster algebras, Assem, Schiffler, and Shramchenko introduced cluster automorphisms in \cite{ASS}.
The concept was generalized to quasi-homomorphisms of cluster algebras by Fraser in \cite{Fra16}. More relations between cluster automorphisms and quasi-automorphisms were discovered in \cite{CS19}. Both the automorphisms and quasi-homomorphisms of cluster algebras are essential tools in the study of symmetries of cluster algebras. 

It is natural to consider quantum version of ``quasi-homomorphisms of cluster algebras''. In \cite{KQW22}, Kimura, Qin, and Wei introduced twist automorphisms for upper cluster algebras and cluster Poisson algebras with coefficients. In Section 8 of \cite{KQW22}, they introduced quantization of twist automorphisms for quantum upper cluster algebras.  

The goal of this paper is to study quasi-homomorphisms of quantum cluster algebras and to study braid group action on the quantum Grassmannian cluster algebra $\CC_q[\Gr(k,n)]$. 

We write a definition of a quasi-homomorphism $f$ from a quantum cluster algebra $\mathcal{A}_q$ to another quantum cluster algebra $\mathcal{A}_q'$, see Definition \ref{def:QQH} and Remark \ref{remark:literature}, which is more similar to the non-quantum version of quasi-homomorphisms given by Fraser \cite{Fra16}. We require that $f: \mathcal{A}_q \to \mathcal{A}_q'$ is an algebra homomorphism, sends quantum clusters in $\mathcal{A}_q$ to quantum clusters in $\mathcal{A}_q'$ (up to frozen variables), preserves the mutable part of exchange matrix, and sends quantum cluster $\mathcal{X}$-variables (denoted by $\widehat{y}_i$) to quantum cluster $\mathcal{X}$-variables. The quantum quasi-homomorphism in Definition \ref{def:QQH} is slightly different from the quantum twist automorphism in \cite{KQW22}. The quantum quasi-homomorphism in Definition \ref{def:QQH} is defined for quantum cluster algebras, while quantum twist automorphism in \cite{KQW22} is defined for quantum upper cluster algebras. We prove some properties of quantum quasi-homomorphisms which will be used in the study of braid group actions on quantum Grassmannian cluster algebras, see Proposition \ref{prop:equ-quasi-homo}. 

For $k \le n$, the Grassmannian $\Gr(k,n)$ is the set of $k$-dimensional subspaces in an $n$-dimensional vector space. Denote by $\CC[\Gr(k,n)]$ its coordinate ring. Scott proved that there is a cluster algebra structure on $\CC[\Gr(k,n)]$ \cite{Sco06}. The coordinate ring $\CC[\Gr(k,n)]$ is called a \emph{Grassmannian cluster algebra}. Fraser \cite{Fra17} has studied braid group action on $\CC[\Gr(k,n)]$. He introduced regular automorphisms $\sigma_i, 1 \leq i \leq d-1,$ on $\Gr(k,n)$, which induce quasi-automorphisms $\sigma^*_i$ (for simplicity, we also write $\sigma_i^*$ as $\sigma_i$) on the coordinate ring $\CC[\Gr(k,n)]$, where $d={\rm gcd}(k,n)$, and he proved that these automorphisms give a braid group action on $\CC[\Gr(k,n)]$.

In this paper we take $\CC_q = \CC(q^{\pm 1/2})$. The quantized coordinate ring $\CC_q[\Gr(k,n)]$ is the subalgebra of the quantum matrix algebra $\CC_q[M(k,n)]$ generated by the quantum Pl\"{u}cker coordinates, see \cite{TT91, GL14, Lau06, Lau10} and Section \ref{subsec:quantum Grassmannian coordinate ring}. Grabowski and Launois \cite{GL11, GL14} proved that the quantum deformation $\CC_q[\Gr(k,n)]$ of $\CC[\Gr(k,n)]$ has a quantum algebra structure. 

Jensen, King and Su showed (see the paragraphs before Theorem 1.3 in \cite{JKS22}) that there is a quantum cluster algebra containing cluster variables $X_J$ for all $k$-subsets $J$ of $[n]$, whose quasi-commutation rules are the same as the quasi-commutation rules of the quantum minors $\Delta_q^J$ of $\CC_q[\Gr(k,n)]$. They showed that the quantum cluster algebra is isomorphic to the quantum coordinate ring $\CC_q[\Gr(k,n)]$ by identifying $X_J$ with $\Delta_q^J$ for all $J$, see Theorem 1.3 in \cite{JKS22} (note that we take $\CC_q = \CC(q^{\pm 1/2})$). 

There is a bijection $\varphi$ between cluster variables in $\CC[\Gr(k,n)]$ and cluster variables in $\CC_q[\Gr(k,n)]$, see Section \ref{subsec:the bijection varphi between cluster monomials and quantum cluster monomials}. It is shown in \cite{Fra17} that in $\CC[\Gr(k,n)]$, for every cluster variable $x$, $\sigma_i(x) = y_1 y_2 \cdots y_r \tilde{x}$, where $r$ is a postive integer, $y_i$ is a frozen variable or the inverse of a frozen variable, and $\tilde{x}$ is a cluster variable. In $\CC_q[\Gr(k,n)]$, we define $\sigma_i(\varphi(x))$ to be the quantum cluster Laurent monomial $[\varphi(y_1)\varphi(y_2)\cdots \varphi(y_r) \varphi(\tilde{x})]$, see Sections \ref{subsec:commutative quantum cluster monomials}, \ref{subsec:the maps sigmai and main results}, and \ref{subsec:commutative quantum cluster monomials} for the definition of the notation $[x_1^{k_1} \cdots x_m^{k_m}]$. 

We prove that every map $\sigma_i$ extends to a map (also denoted by $\sigma_i$) that preserves the quasi-commutation relations of quantum Pl\"{u}cker coordinates and exchange relations of the quantum Grassmannian cluster algebra $\CC_q[\Gr(k,n)]$. We conjecture that $\sigma_i$ also preserves $r$-term ($r \ge 4$) quantum Pl\"{u}cker relations of $\CC_q[\Gr(k,n)]$ and other relations which cannot be derived from quantum Pl\"{u}cker relations (if any), see Conjecture \ref{conj:sigmai preserves r term plucker relations r is 4 or more}. Up to this conjecture, we show that $\sigma_i$ is a quasi-automorphism of $\CC_q[\Gr(k,n)]$ and the braid group acts on $\CC_q[\Gr(k,n)]$, see Theorem \ref{thm: braids are QQH}. This braid group action gives a way to discover many quantum cluster variables and quantum clusters of the quantum Grassmannian cluster algerba.

Symmetries of quantum Grassmannians have been studied in \cite{LL11}, \cite{AG12}, \cite{LL23}. Launois and Lenagan proved that a certain cycling map $\rho$ defines a  $\CC_q$-algebra isomorphism from $\CC_q[\Gr(k,n)]$ to its twisted algebra $T(\CC_q[\Gr(k,n)])$ and they showed that the cycling map $\rho$ is not a $\CC_q$-algebra automorphism of $\CC_q[\Gr(k,n)]$. For example, in $\CC_q[\Gr(2,4)]$, the map $\sigma_1$ is similar to but different from the cycling map $\rho$. Unlike $\rho$, the map $\sigma_1$ is a $\CC_q$-algebra automorphism on $\CC_q[\Gr(2,4)]$ (see Section \ref{sec:Gr24}). The braid group action gives a large family of $\CC_q$-algebra automorphisms on the quantum cluster algebra $\CC_q[\Gr(k,n)]$, see Lemma \ref{lem:sigmai and sigmai inverse are algebra automorphisms}. 

Marsh and Scott \cite{MS16} defined a twist automorphism on $\CC[\Gr(k,n)]$ which sends cluster variables to cluster variables up to multiplying frozen variables. This twist automorphism is closed related to Berenstein–Fomin–Zelevinsky twist automorphisms on unipotent cells \cite{BFZ96}. We expect that there is a quantum version of Marsh-Scott's twist map on $\CC_q[\Gr(k,n)]$ which should be closely related to the quantum twist map studied in \cite{KO19}.

The paper is organized as follows. In Section \ref{sec:quasi-hom of quantum cluster algebras}, we introduce the concept of quasi-homomorphisms of quantum cluster algebras. In Section \ref{sec:braid group action on quantum Grassmannian cluster algebras}, we study braid group actions on quantum Grassmannian cluster algebras. In Section \ref{sec:prove main theorem}, we prove our main result: Theorem \ref{thm: braids are QQH}. 

\subsection*{Acknowledgements}

We would like to thank Chris Fraser, Jan E. Grabowski, and Bach Nguyen for very helpful discussions. We would like to thank Fan Qin for telling us their work of twist automorphisms for upper cluster algebras and cluster Poisson algebras with coefficients, and quantum twist automorphisms for quantum upper cluster algebras in \cite{KQW22}, and for very helpful discussions. The authors would like to thank the anonymous referees for their very helpful comments and suggestions which have helped us to improve considerably the paper.

The first author is supported by Shaanxi Normal University and NSF of China: 12271321, and the project of Youth Innovation Team of Universities of Shandong Province (No. 2022KJ314). The second author is supported by NSF of China: 12101617, Guangdong Basic and Applied Basic Research Foundation 2021A1515012035, and Fundamental Research Funds for the Central Universities, Sun Yat-sen University (23qnpy51). The third author is supported by the Austrian Science Fund (FWF): P-34602. 

\subsection*{Notation}

Throughout the paper, we use $\ZZ$ as the set of integers and use the notation $[b]_+=\max(b,0)$ for $b\in \ZZ$. We use $\Mat_{m\times n}(\ZZ)$ to denote the set of $m\times n$ matrices over $\ZZ$. For a matrix $M=(b_{ij})\in \Mat_{m\times n}(\ZZ)$, we also use the notation $[M]_+=([b_{ij}]_+)$. For any positive integer $m$, we denote by $[1,m]$ the (ordered) set of positive integers smaller than $m+1$.
We will use $|I|$ to denote the cardinality of a set $I$.  

\section{Quasi-homomorphisms of quantum cluster algebras} \label{sec:quasi-hom of quantum cluster algebras}

We will introduce the notion of quasi-homomorphisms of quantum cluster algebras in this section.
We start with briefly recalling some background which will be used in the following, see details in \cite{BZ05}.

\subsection{Background on quantum cluster algebras}

\subsubsection{Quantum seeds}
Let $m$ and $n$ be two positive integers. A quantum seed is a triple $\Sigma=(\x,\t{B},\Lambda)$, where
\begin{itemize}
\item  $\x=(x_{1}, \ldots  ,x_{m})$ is an ordered set with $m$ elements;

\item  $\t{B}=\left(\begin{smallmatrix}B\\ C\end{smallmatrix}\right)\in \Mat_{m\times n}(\ZZ)$ is an extended skew-symmetrizable matrix, that is, there exists a diagonal matrix $D$ with positive integer entries such that $DB$ is skew-symmetric, where $B$ is the $n\times n$ submatrix of $\t{B}$ consisting of the first $n$ rows;

\item $\Lambda\in \Mat_{m\times m}(\ZZ)$ is a skew-symmetric matrix such that $(\t{B},\Lambda)$ is a compatible pair, that is ${\t{B}}^t\Lambda=(D, 0_{n\times (m-n)})$.
\end{itemize}
The set $\x$ is the \emph{cluster} of the quantum seed $\Sigma$. The elements in $\p=\{x_{n+1},\ldots,x_m\} \subset \x$ are called the \emph{coefficient variables} and the elements in $\x\setminus \p$ are called the \emph{cluster variables} of $\S$. The $n\times n$ matrix $B$ is called the \emph{exchange matrix} of $\S$; its rows are called \emph{exchangeable rows}, and the remaining rows of $\t{B}$ are called \emph{frozen rows}. We always assume that both $\t{B}$ and $B$ are indecomposable matrices, and we also assume that $n>1$ for convenience.

\subsubsection{Quantum torus} \label{subsec:quantum torus}
Denote $\CC_q = \CC(q^{\pm \frac{1}{2}})$, where $q$ is an indeterminate. A quantum torus $\T$ of rank $m \in \ZZ_{>0}$ is a $\CC_q$-algebra which has basis $\{X^a \mid a\in \ZZ^{m}\}$ with multiplication:
%{\wen {\tt It seems that we should consider the quantum torus and the the quantum cluster algebra over $\CC(q)$, at least over $\Q(q)$, see Remark 7.8 in \cite{GL14}. So I change $\ZZ[q^{\pm\frac{1}{2}}]$ to $\CC_q=\CC(q)$.}}
\begin{equation}\label{eq:qtorus1}
X^aX^b=q^{\frac{1}{2}\Lambda(a,b)}X^{a+b},
\end{equation}
where $\Lambda(-,-)$ is the bilinear form given by a skew-symmetric matrix $\Lambda$.

Let $\{e_1,\ldots,e_m\}$ be the standard basis of $\ZZ^m$, then we have
\begin{equation}\label{eq:qtorus2}
X^{e_i}X^{e_j}=q^{\frac{1}{2}\Lambda(e_i,e_j)}X^{e_i+e_j},X^{e_i}X^{e_j}
=q^{\Lambda(e_i,e_j)}X^{e_j}X^{e_i}.
\end{equation}

We associate with $\T$ the $\CC$-linear \emph{bar-involution} by setting
$$\overline{q^{\frac{1}{2}}X^a}=q^{-\frac{1}{2}}X^a$$
on each basis element. 
An element $X$ in $\T$ is a \emph{bar-invariant} if it is maintained by the bar-involution, that is, $\overline X=X$.

Given a quantum seed $\Sigma=(\x,\t{B},\Lambda)$ with notations as above, we associate to it a quantum torus $\T(\Sigma)$, where $X^{e_i}=x_i$ and $\Lambda(-,-)$ is the bilinear form given by $\Lambda$.
We denote by $\P$ the monoid (the multiplication is inherited from $\T(\Sigma)$) consisting of elements $X^{(0,a)}$ with $0\in \ZZ^n$ and $a\in \ZZ^{m-n}$.

\subsubsection{Quantum mutations}

%Given a quantum seed $\Sigma=(\x,\t{B},\Lambda)$ with quantum torus $\T(\Sigma)$.
Denote by $b_{ij}$ the entry in $\t{B}$ located at position $(i,j)$ for $1 \le i \le m$, $1 \le j \le n$ and by $b_k$ the $k$-th column of $\t{B}$ for $1\leq k \leq n$. Similarly we use notations $\lambda_{ij}$ and $\lambda_k$ for $\Lambda$ with $1\leq i,j,k \leq m$.
The quantum seed $\mu_k(\S)=(\mu_k(\x),\mu_k(\t{B}),\mu_k(\Lambda))$ obtained by the \emph{mutation} of $\S$ in direction $k$ is given by:
\begin{itemize}
				\item $\mu_k(\x) = (\x \setminus \{x_k\}) \sqcup \{\mu_k(x_k)\}$ where $\mu_k(x_k)$ is an element in $\T(\Sigma)$ defined by
\begin{equation}
\label{eq:cluster-variable-mutation}
\mu_k(x_k) = X^{-e_k+[b_k]_+}+X^{-e_k+[-b_k]_+}.
\end{equation}
				\item $\mu_k(\widetilde B)=(b'_{ji})_{m\times n} \in M_{m\times n}(\ZZ)$ is given by
\begin{equation}
\label{eq:matrix-mutation}
b'_{ji} = \left\{\begin{array}{ll}
						- b_{ji} & \textrm{ if } i=k \textrm{ or } j=k~; \\
						b_{ji} + [-b_{jk}]_+b_{ki} + b_{jk}[b_{ki}]_+ & \textrm{ otherwise.}
					\end{array}\right.
\end{equation}
               \item $\mu_k(\Lambda)=(\lambda'_{ji})_{m\times m} \in M_{m\times m}(\ZZ)$ is given by
\begin{equation}
\label{eq:lambda-matrix-mutation}
\lambda'_{ji} = \left\{\begin{array}{ll}
					    \lambda_{jk}+ \lambda^t_j [b_k]_+	& \textrm{ if } i=k~\vspace{1mm}; \\
					    \lambda_{ki}- \lambda^t_i [b_k]_+	& \textrm{ if } j=k~; \\
						\lambda_{ji} & \textrm{ otherwise.}
					\end{array}\right.
\end{equation}
\end{itemize}
It can be check that $(\mu_k(\t{B}),\mu_k(\Lambda))$ is still a compatible pair and $\mu(\Sigma)$ is a quantum seed.

\subsubsection{Quantum cluster algebra}
Let $\XX$ be the set of cluster variables in the quantum seeds obtained by iterated mutations from a quantum seed $\Sigma$. Then the algebra
\[\A_q=\A_q(\Sigma):=\CC_q\mathbb P[\XX]
\]
algebraically generated by elements in $\XX$ over $\CC_q\mathbb P$ is called the {\it quantum cluster algebra} of {\it rank} $n$ defined by $\Sigma$, which is a $\CC_q\mathbb P$-subalgebra of the quantum torus $\T(\Sigma)$. The elements in $\XX$ are called \emph{cluster variables}, and the elements in $\CC_q\mathbb P$ are called \emph{coefficients} of $\A_q$.

\subsection{Quasi-homomorphisms of quantum cluster algebras}

Let $\A_q$ and ${\A}'_q$ be two quantum cluster algebras of rank $n$.
Assume that $\S=\{\x=\{x_1,\ldots,x_m\},\t{B}=\left(\begin{smallmatrix}B\\ C\end{smallmatrix}\right),\Lambda\}$ and $\S'=\{\x'=\{x'_1,\ldots,x'_{m'}\},\t{B'}=\left(\begin{smallmatrix}B'\\ C'\end{smallmatrix}\right),\Lambda'\}$ be the initial seeds of them respectively. Correspondingly, we have quantum torus $\T$ and $\T'$.

\begin{definition}\label{def:QQH}
We call a $\CC_q$-algebra homomorphism $f: \A_q \to \A'_q$ a quantum quasi-homomorphism (with respect to $\S$ and $\S'$) if the following conditions are satisfied:
\begin{enumerate}
\item $B=B'$;

\item $f(x_i)=X'^{e'_i+\widetilde h_i}$ for all $1\leq i \leq n$, and $f(x_{j})=X'^{\t{l}_j}$ for all $n+1\leq i \leq m$, where $\t{h}_i=\left(\begin{smallmatrix}0\\ h_i\end{smallmatrix}\right)$ and $\t{l}_j=\left(\begin{smallmatrix}0\\ l_j\end{smallmatrix}\right)$ with $0\in \ZZ^{n}, h_i\in \ZZ^{{m'}-n}$ and $l_j\in \ZZ^{{m'}-n}$ such that $$f(\hat{y}_i)=\hat{y}'_i$$ for all $1\leq i \leq n$, where $\hat{y}_i=X^{b_i}$ and $\hat{y}'_i=X'^{b'_i}$.
\end{enumerate}
\end{definition}

\begin{remark} \label{remark:literature}
In Definition 3.1.3 of \cite{KQW22}, the variation map between Laurent polynomial rings preserving the Poisson structure was defined. It commutes with mutation sequences by Proposition 3.3.2 in \cite{KQW22}. It restricts to a map between upper cluster algebras by Lemma 3.3.4 in \cite{KQW22}. Since the Poisson structure is preserved, the map has a natural quantization (see the first paragraph in Section 8.2 in \cite{KQW22}) and the authors introduced quantum twist automorphisms for quantum upper cluster algebras.  
We would like to thank Fan Qin for telling us these after our paper was uploaded to Arxiv. 

The quantum quasi-homomorphism in Definition \ref{def:QQH} is slightly different from the quantum twist automorphism in \cite{KQW22}. The quantum quasi-homomorphism in Definition \ref{def:QQH} is defined for quantum cluster algebras, while quantum twist automorphism in \cite{KQW22} is defined for quantum upper cluster algebras. Definition \ref{def:QQH} is more similar to the non-quantum version of quasi-homomorphisms introduced by Fraser in \cite{Fra16}. 
\end{remark}

Let
\begin{align*}
L=(l_1,\ldots,l_{m-n}), \quad H=(h_1,\ldots,h_{n}), \quad R= \left(\begin{array}{cc} I_{n\times n}&0\\H&L\end{array}\right)\in \Mat_{{m'}\times m}(\ZZ).
\end{align*} 
Denote by $r_i$ the $i$-th column of $R$. Then for any $1\leq i \leq m$,
\begin{equation}\label{eq:map}
f(X^{e_i})=X'^{r_i}.
\end{equation} 

\begin{lemma}\label{Matrixmul}
For a quantum quasi-homomorphism $f: \A_q \to \A'_q$, we have that
$$R^t\Lambda'R=\Lambda.$$
\end{lemma}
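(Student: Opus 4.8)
The plan is to use the single structural fact that an algebra homomorphism must carry the quasi-commutation relations of $\T$ to valid relations in $\T'$, and then to extract the matrix identity by comparing exponents of $q$. The starting point is the identity \eqref{eq:map}, which gives $f(X^{e_i})=X'^{r_i}$ for every $1\leq i\leq m$, where $r_i$ is the $i$-th column of $R$. Since the generators of $\A_q$ are exactly the $X^{e_i}=x_i$, controlling $f$ on these generators together with the fact that $f$ is $\CC_q$-linear will suffice.

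First I would record the quasi-commutation relation inside $\T$. By \eqref{eq:qtorus2}, for all $1\leq i,j\leq m$ one has $X^{e_i}X^{e_j}=q^{\Lambda(e_i,e_j)}X^{e_j}X^{e_i}=q^{\lambda_{ij}}X^{e_j}X^{e_i}$, an equality that already holds inside the subalgebra $\A_q\subseteq\T$. Applying the $\CC_q$-algebra homomorphism $f$, and using that $q^{\lambda_{ij}}\in\CC_q$ is a scalar, substitution of \eqref{eq:map} yields
\[
X'^{r_i}X'^{r_j}=q^{\lambda_{ij}}X'^{r_j}X'^{r_i}
\]
as an identity in $\T'$. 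On the other hand, the intrinsic quasi-commutation rule of $\T'$ reads $X'^{r_i}X'^{r_j}=q^{\Lambda'(r_i,r_j)}X'^{r_j}X'^{r_i}$.

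Next I would compare these two expressions. By \eqref{eq:qtorus1}, $X'^{r_j}X'^{r_i}=q^{\frac12\Lambda'(r_j,r_i)}X'^{r_i+r_j}$ is a nonzero scalar multiple of a basis element of $\T'$, hence a unit that may be cancelled. Because $q$ is an indeterminate, equality of the scalar coefficients forces the exponents to agree:
\[
\lambda_{ij}=\Lambda'(r_i,r_j)=r_i^{\,t}\Lambda' r_j=(R^t\Lambda' R)_{ij}.
\]
As $\lambda_{ij}=\Lambda_{ij}$ and this holds for all $i,j$, we obtain $\Lambda=R^t\Lambda' R$, which is the claim.

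This argument is short, so the ``main obstacle'' is really one of bookkeeping and conventions rather than of substance: one must pin down the bilinear-form convention $\Lambda(a,b)=a^t\Lambda b$, hence the identification $\lambda_{ij}=\Lambda(e_i,e_j)$ and $r_i^{\,t}\Lambda' r_j=(R^t\Lambda' R)_{ij}$, and one must justify that the monomial being cancelled is invertible in the quantum torus so that equating exponents of the indeterminate $q$ is legitimate. The only external ingredient invoked is \eqref{eq:map}, which has already been derived from Definition \ref{def:QQH}; no deeper input (e.g. mutation-compatibility) is needed for this statement.
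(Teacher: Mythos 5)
Your proof is correct and follows essentially the same route as the paper: apply the $\CC_q$-algebra homomorphism $f$ to the relation $X^{e_i}X^{e_j}=q^{\lambda_{ij}}X^{e_j}X^{e_i}$, substitute $f(X^{e_i})=X'^{r_i}$ from \eqref{eq:map}, and compare $q$-exponents against the intrinsic relation in $\T'$ to get $\lambda_{ij}=\Lambda'(r_i,r_j)=(R^t\Lambda' R)_{ij}$. The paper phrases the comparison via the identity $\frac{1}{2}\Lambda'(r_i,r_j)=\Lambda(e_i,e_j)+\frac{1}{2}\Lambda'(r_j,r_i)$ and skew-symmetry rather than by cancelling the unit monomial $X'^{r_j}X'^{r_i}$, but this is only a bookkeeping difference; your explicit justification that the cancelled monomial is invertible and that $q$ being an indeterminate forces equality of exponents is a welcome extra bit of care.
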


\begin{proof}

For any $i,j\in [m]$, we have $X^{e_i}X^{e_j}=q^{\Lambda(e_i,e_j)}X^{e_j}X^{e_i}.$ As $f$ is a $\mathbb C_q$-algebra homomorphism, we obtain
$$f(X^{e_i})f(X^{e_j})=q^{\Lambda(e_i,e_j)}f(X^{e_j})f(X^{e_i}).$$
Then by equalities \eqref{eq:qtorus2} and \eqref{eq:map}, we have $\frac{1}{2}\Lambda'(r_i,r_j)=\Lambda(e_i,e_j)+\frac{1}{2} \Lambda'(r_j,r_i)$. Consequently, $r_i^t\Lambda' r_j=\Lambda'(r_i,r_j)=\Lambda(e_i,e_j)=\lambda_{ij}$ for $i,j\in [1,m]$.  
\end{proof}

For any $i\in\{1,\ldots, n\}$, denote by $R'\in {\rm Mat}_{m'\times m}(\mathbb Z)$ the matrix with the $k$-th column $r'_k$, $k=1,\ldots, m$,  where
\begin{equation*}
r'_{k} = \left\{\begin{array}{ll}
					     e'_i+ R[b_i]_+-[b'_i]_+	& \textrm{ if } k=i~\vspace{1mm}; \\
					     r_k	& \textrm{ if } k \neq i.
					\end{array}\right.
\end{equation*}

Note that if we set $q$ to be one, then the definition degenerates back to the non-quantum case introduced by  \cite{Fra16}.
In particular, the following lemma follows from the commutative case.
%{\tt do we need this lemma in the following? (1) is the same as in Proposition 2.7 (3).}

\begin{lemma}\label{lem-mm}
For a quantum quasi-homomorphism $f: \A_q \to \A'_q$, we have that
\begin{enumerate}[(1)]
\item $R\t B=\t {B'}$,

\item $R'\mu_i\t{B}=\mu_i\t{B'}.$
\end{enumerate}
\end{lemma}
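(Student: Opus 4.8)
The plan is to establish both identities by direct matrix computation, exploiting the compatibility conditions and the relation $R^t\Lambda' R = \Lambda$ from Lemma \ref{Matrixmul}, together with the fact that the statement reduces to the commutative case of Fraser \cite{Fra16} when specialized appropriately.

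For part (1), I would start from the defining conditions of a quantum quasi-homomorphism. Condition (1) gives $B = B'$, and condition (2) encodes that $f(\hat y_i) = \hat y_i'$, i.e. $f(X^{b_i}) = X'^{b_i'}$. Since $f(X^{e_i}) = X'^{r_i}$ by \eqref{eq:map} and $f$ is an algebra homomorphism respecting the quantum torus structure, applying $f$ to $X^{b_i} = X^{\sum_j b_{ji} e_j}$ (up to a power of $q$) and comparing with $X'^{b_i'}$ yields $R b_i = b_i'$ for each exchangeable column $i$, which is precisely the column-wise form of $R\t B = \t{B'}$. Concretely, I expect this to follow by matching lattice exponents: the homomorphism sends the weight $b_i \in \ZZ^m$ to $R b_i \in \ZZ^{m'}$, and the condition $f(\hat y_i) = \hat y_i'$ forces $R b_i = b_i'$. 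Because $R = \left(\begin{smallmatrix} I_{n\times n} & 0 \\ H & L \end{smallmatrix}\right)$ has an identity block on the mutable rows and the first $n$ rows of $\t B, \t{B'}$ coincide (as $B = B'$), the top $n$ rows of $R\t B = \t{B'}$ are automatic, and only the frozen rows carry content; this is exactly where $H$ and $L$ are pinned down.

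For part (2), the strategy is to reduce to part (1) via the mutation rule for the matrix $\t B$. The idea is that $R'$ is built from $R$ by the same recipe that relates $f$ in the initial seed to $f$ in the once-mutated seed at direction $i$: the modified column $r_i' = e_i' + R[b_i]_+ - [b_i']_+$ is engineered precisely so that $R'$ plays the role of the transition matrix after mutation. I would verify $R'\mu_i\t B = \mu_i \t{B'}$ column by column, splitting into the cases $k = i$ and $k \ne i$ in the definition of $\mu_i$ from \eqref{eq:matrix-mutation}. For $k = i$, mutation negates the $i$-th column, so $\mu_i\t B$ has $i$-th column $-b_i$, and I would compute $R'(-b_i)$ using the definition of $r_i'$ and the identity from part (1). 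For $k \ne i$, I would substitute the mutated entries $b_{jk}' = b_{jk} + [-b_{ji}]_+ b_{ik} + b_{ji}[b_{ik}]_+$ and expand, using $R\t B = \t{B'}$ to collapse terms. The bookkeeping with the $[\,\cdot\,]_+$ truncations is the only delicate part, and here the remark preceding the lemma — that setting $q = 1$ recovers Fraser's commutative setting — lets me invoke the corresponding commutative identity rather than grinding through every case by hand.

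The main obstacle I anticipate is the sign-bookkeeping in the $[\,\cdot\,]_+$ operations when expanding the mutated matrix entries in part (2): the piecewise definition means one must track the sign of $b_{ji}$ and $b_{ik}$ across several cases, and ensure the correction terms $R[b_i]_+ - [b_i']_+$ in the definition of $r_i'$ exactly cancel the discrepancy introduced by mutation. Since part (1) guarantees $Rb_i = b_i'$, the expression $R[b_i]_+$ and $[b_i']_+ = [Rb_i]_+$ need not agree entrywise (positivity is not preserved by $R$ on frozen rows), so the cancellation is genuinely using the structure of $R$ rather than a formal identity; this is the step I would write out most carefully, while leaning on \cite{Fra16} for the purely combinatorial verification.
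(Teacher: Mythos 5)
Your proposal is correct, and its core coincides with the paper's. The paper's own proof of Lemma \ref{lem-mm} is essentially a one-liner: the sentence immediately preceding the statement observes that setting $q=1$ degenerates Definition \ref{def:QQH} to Fraser's commutative notion of quasi-homomorphism, and both (1) and (2) are then quoted from \cite{Fra16}; this is legitimate because all matrices involved ($R$, $R'$, $\t B$, $\mu_i\t B$, $\mu_i\t{B'}$) are integer matrices defined by $q$-independent formulas. Your route differs mildly in part (1): instead of citing the commutative case you derive $Rb_i=b'_i$ directly from $f(\hat y_i)=\hat y'_i$ by matching exponents in the quantum torus. This is valid --- the monomials $X'^a$ form a $\CC_q$-basis of $\T'$, and Lemma \ref{Matrixmul} cancels the half-integer powers of $q$ exactly, which is precisely the computation the paper carries out later in Lemma \ref{Lem-bar} and in the ``if part'' of Proposition \ref{prop:equ-quasi-homo} --- and it has the merit of being self-contained in the quantum setting rather than outsourced. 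For part (2) your reduction matches the paper's: once (1) supplies $R\t B=\t{B'}$ (with $B=B'$ and $R$ of the stated block form), the identity $R'\mu_i\t B=\mu_i\t{B'}$ involves no $q$ at all, so invoking Fraser's commutative verification is exactly what the authors do, and your anticipated case analysis over \eqref{eq:matrix-mutation} is the content of that commutative proof. Your closing caveat --- that $R[b_i]_+$ and $[b'_i]_+=[Rb_i]_+$ need not agree entrywise on frozen rows, so the correction in $r'_i$ does real work --- is a correct and worthwhile observation; note that on the first $n$ rows they do agree (by $B=B'$ and the identity block of $R$), which is the fact Proposition \ref{prop:quasi-equ} later exploits.
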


For any quasi-homomorphism $f:\mathcal A_q(\Sigma)\to \mathcal A_q(\Sigma')$, it is clear that $f$ induces a ring homomorphism $f: \mathbb T(\Sigma)\to \mathbb T(\Sigma')$.

\begin{lemma}\label{Lem-bar}
Let $f:\mathcal A_q\to \mathcal A'_q$ be a quasi-homomorphism. Then for any $a=(a_1,\ldots, a_m)^t\in \mathbb Z^m$, we have that
\begin{align*}
f(X^a)=X'^{Ra}.
\end{align*}
In particular, $f(X^a)$ is bar-invariant.     
\end{lemma}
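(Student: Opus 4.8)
The plan is to reduce everything to the defining identity $f(X^{e_i})=X'^{r_i}$ of \eqref{eq:map} together with the compatibility $R^t\Lambda'R=\Lambda$ from Lemma \ref{Matrixmul}, and then to propagate these from the generators to an arbitrary monomial $X^a$ via the homomorphism property of $f$. The only genuine work is bookkeeping of the $q$-powers produced by the noncommutative multiplication \eqref{eq:qtorus1}; once these are tracked, the compatibility relation makes them cancel.

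First I would factor $X^a$ into an ordered product of generators. Since $\Lambda$ is skew-symmetric, $\Lambda(e_i,e_i)=0$, so $(X^{e_i})^{a_i}=X^{a_ie_i}$ for every $a_i\in\ZZ$ (for $a_i<0$ this uses that $X^{-e_i}$ is the inverse of $X^{e_i}$ in the quantum torus, where $f$ is understood as the induced ring homomorphism $\T(\Sigma)\to\T(\Sigma')$ noted just before the statement). Multiplying out with \eqref{eq:qtorus1} gives $\prod_{i=1}^m(X^{e_i})^{a_i}=q^{\frac12\sum_{i<j}a_ia_j\lambda_{ij}}X^a$, and hence $X^a=q^{-\frac12\sum_{i<j}a_ia_j\lambda_{ij}}\prod_{i=1}^m(X^{e_i})^{a_i}$.

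Next I would apply $f$, substitute $f(X^{e_i})=X'^{r_i}$, and perform the identical reassembly inside $\T'$, where $\Lambda'(r_i,r_i)=0$ and $\sum_i a_ir_i=Ra$. This yields
\[
f(X^a)=q^{\frac12\sum_{i<j}a_ia_j(\Lambda'(r_i,r_j)-\lambda_{ij})}X'^{Ra}.
\]
The prefactor collapses precisely because the proof of Lemma \ref{Matrixmul} records $\Lambda'(r_i,r_j)=r_i^t\Lambda'r_j=\lambda_{ij}$ for all $i,j$ (equivalently $R^t\Lambda'R=\Lambda$), so each exponent vanishes and $f(X^a)=X'^{Ra}$. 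I would note that the same conclusion can be reached by induction on $a$, whose step $X^{a\pm e_i}=q^{\mp\frac12\Lambda(a,e_i)}X^aX^{\pm e_i}$ uses $\Lambda'(Ra,r_i)=a^tR^t\Lambda'Re_i=\Lambda(a,e_i)$ directly; either way the structural input is exactly Lemma \ref{Matrixmul}.

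Finally, for the bar-invariance I would simply observe that $X'^{Ra}$ is a single basis vector of $\T'$ carrying the scalar $1=q^0$, and that the bar-involution fixes each basis element while sending $q^{1/2}\mapsto q^{-1/2}$; hence $\overline{X'^{Ra}}=X'^{Ra}$ and $f(X^a)$ is bar-invariant. I do not expect a substantive obstacle: the argument is routine, and the one delicate point---the cancellation of the $q$-exponents---is precisely supplied by the identity $R^t\Lambda'R=\Lambda$.
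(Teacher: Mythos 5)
Your proposal is correct and follows essentially the same route as the paper's own proof: factor $X^a$ as $q^{-\frac{1}{2}\sum_{i<j}a_ia_j\lambda_{ij}}x_1^{a_1}\cdots x_m^{a_m}$, apply the homomorphism $f$ using $f(X^{e_i})=X'^{r_i}$, reassemble in $\T'$, and cancel the $q$-exponents via Lemma \ref{Matrixmul}. Your extra care with negative exponents in the quantum torus and your explicit justification of bar-invariance (a single basis element is fixed by the bar-involution) are fine elaborations of points the paper leaves implicit.
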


\begin{proof}
By direct calculation, we have

\begin{equation*}
\begin{array}{rcl}
f(X^a)
&=&f(q^{-\frac{1}{2} \sum_{i<j} a_ia_j\lambda_{ij}}x_1^{a_1}\cdots x_m^{a_m})=q^{-\frac{1}{2} \sum_{i<j} a_ia_j\lambda_{ij}}X'^{a_1r_1}\cdots X'^{a_mr_m}\vspace{2.5pt}\\
&=& q^{-\frac{1}{2} \sum_{i<j} a_ia_j\lambda_{ij}}q^{\frac{1}{2} \sum_{i<j} a_ia_j\Lambda'(r_i,r_j)}X'^{a_1r_1+\cdots +a_mr_m}=X'^{a_1r_1+\cdots +a_mr_m}
\vspace{2.5pt}\\
&=& X'^{Ra},
\end{array}
\end{equation*}
where the last but one equality follows by Lemma \ref{Matrixmul}.
\end{proof}

\begin{proposition}\label{prop-bar}
 Let $f:\mathcal A_q\to \mathcal A'_q$ be a quasi-homomorphism. Then $f$ maps a bar-invariant element in $\mathcal A_q$ to a bar-invariant element in $\mathcal A'_q$, that is, $\overline{f(X)}=f(X)$ if $\overline{X}=X\in \mathcal A_q$. 
\end{proposition}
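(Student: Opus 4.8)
The plan is to reduce the statement to the bar-invariance computation already packaged in Lemma \ref{Lem-bar}. The key observation is that $\mathcal{A}_q$ is spanned over $\CC$ by monomials of the form $q^{c/2}X^{a}$ with $a\in\ZZ^m$ and $c\in\ZZ$, and the bar-involution is a $\CC$-semilinear involution fixing exactly those elements whose coefficient is suitably symmetric. So the natural strategy is: first show that $f$ intertwines the two bar-involutions, i.e. $f(\overline{X})=\overline{f(X)}$ for \emph{every} $X\in\mathcal{A}_q$, and then the proposition is immediate, since $\overline{X}=X$ forces $\overline{f(X)}=f(\overline{X})=f(X)$.

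To prove the intertwining property $f\circ(\ \overline{\phantom{x}}\ )=(\ \overline{\phantom{x}}\ )\circ f$, I would argue that both sides are $\CC$-semilinear (conjugate-linear with respect to the involution $q^{1/2}\mapsto q^{-1/2}$) ring maps, hence it suffices to check they agree on algebra generators together with the scalar $q^{1/2}$. On the scalars both composites send $q^{1/2}$ to $q^{-1/2}$, so the content is on the cluster variables $x_i=X^{e_i}$. For these, Lemma \ref{Lem-bar} gives $f(X^{e_i})=X'^{r_i}$, and the same lemma asserts that $f(X^{a})$ is bar-invariant for every $a$; in particular $\overline{f(X^{e_i})}=\overline{X'^{r_i}}=X'^{r_i}=f(X^{e_i})=f(\overline{X^{e_i}})$, where the last equality uses $\overline{X^{e_i}}=X^{e_i}$ in $\mathcal{A}_q$. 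Thus the two semilinear maps agree on generators and on the ground field, so they coincide on all of $\mathcal{A}_q$.

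With the intertwining identity in hand the proposition is one line. Concretely, suppose $X\in\mathcal{A}_q$ satisfies $\overline{X}=X$. Write $X=\sum_a c_a X^{a}$ as a (finite) $\CC_q$-combination of basis monomials of the quantum torus. Applying $f$ and using $\CC_q$-semilinearity of $\overline{\phantom{x}}$ together with Lemma \ref{Lem-bar}, we compute
\begin{equation*}
\overline{f(X)}=\overline{f\Big(\sum_a c_a X^{a}\Big)}=\overline{\sum_a c_a\, X'^{Ra}}=\sum_a \overline{c_a}\,\overline{X'^{Ra}}=\sum_a \overline{c_a}\, X'^{Ra}=f\Big(\overline{\sum_a c_a X^{a}}\Big)=f(\overline{X})=f(X),
\end{equation*}
where the middle equality $\overline{X'^{Ra}}=X'^{Ra}$ is exactly the bar-invariance supplied by Lemma \ref{Lem-bar}, and the final step uses the hypothesis $\overline{X}=X$.

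The main obstacle, such as it is, lies not in any deep estimate but in handling the semilinearity bookkeeping cleanly: one must be careful that the bar-involution acts nontrivially on the scalar field $\CC_q=\CC(q^{\pm 1/2})$ (it fixes $\CC$ but sends $q^{1/2}\mapsto q^{-1/2}$), so $f$ is only $\CC$-linear rather than $\CC_q$-linear in the relevant sense, and the coefficients $c_a$ must be conjugated correctly when the involution is pushed through $f$. Once this is tracked, the proof is essentially a formal consequence of Lemma \ref{Lem-bar}, and indeed the entire content has already been front-loaded into that lemma; the proposition merely upgrades ``$f$ of a monomial is bar-invariant'' to ``$f$ preserves bar-invariance of arbitrary elements.''
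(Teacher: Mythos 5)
Your proof is correct and takes essentially the same route as the paper's: expand $X=\sum_a \lambda(a)X^a$ via the quantum Laurent phenomenon and invoke Lemma \ref{Lem-bar} (that $f(X^a)=X'^{Ra}$ is bar-invariant), your displayed chain being the paper's computation reorganized so that the hypothesis $\overline{X}=X$ enters at the end through $f(\overline{X})=f(X)$ rather than through the coefficient-wise identity $\overline{\lambda(a)}=\lambda(a)$ that the paper extracts from uniqueness of the Laurent expansion. One cosmetic caution: the bar-involution is an \emph{anti}-automorphism of the quantum torus ($\overline{XY}=\overline{Y}\,\overline{X}$, since $\overline{X^aX^b}=q^{-\frac{1}{2}\Lambda(a,b)}X^{a+b}$), so in your preliminary intertwining paragraph the two composites are semilinear anti-homomorphisms rather than ring maps, and the generators of $\mathcal{A}_q$ are not just the initial $x_i$ (one should work in $\T(\Sigma)$, where $f$ extends) --- harmless here, since your concluding computation is self-contained and never uses that paragraph.
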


\begin{proof}
It follows from the second condition in Definition \ref{def:QQH} that $f(x_i), i\in \{1, \ldots, m\}$ are bar-invariant. For any $X\in \mathcal A_q$ such that $\overline{X}=X$, by the quantum Laurent phenomenon, we have $X=\sum_{a} \lambda(a)X^{a}$ for some $\lambda(a)\in \mathbb \CC_q$.
Then we have $f(X)=\sum_{a}\lambda(a) f(X^a)$ and $\overline{f(X)}=\sum_{a}\overline{\lambda(a)} f(X^a)$. As $\overline X=X$, we have $\overline {\lambda(a)}=\lambda(a)$ for each $a$. Then by Lemma \ref{Lem-bar}, we see that $\overline{f(X)}=f(X)$, that is, $f(X)$ is bar-invariant. 
\end{proof}

The following proposition shows that the definition of quasi-homomorphism of quantum cluster algebras is compatible with the mutations.

\begin{proposition}\label{prop:quasi-equ}
Let $f: \mathcal A_q\to \mathcal A'_q$ be a quantum quasi-homomorphism with respect to seeds $\Sigma$ and $\Sigma'$. Then for any sequence $(i_1,i_2,\ldots, i_s)$, $i_j \in \{1,\ldots,n\}$, we have that $f: \mathcal A_q\to \mathcal A'_q$ is a quantum quasi-homomorphism with respect to the seeds $\mu_{i_s}\cdots \mu_{i_1}\Sigma$ and $\mu_{i_s}\cdots \mu_{i_1}\Sigma'$.
\end{proposition}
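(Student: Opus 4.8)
The plan is to reduce the general claim to the single-step case by induction on the length $s$ of the mutation sequence, and to prove the single-step case by verifying the two defining conditions of Definition~\ref{def:QQH} directly. For the inductive step, it suffices to show that if $f$ is a quantum quasi-homomorphism with respect to seeds $\Sigma$ and $\Sigma'$, then for each $i\in\{1,\ldots,n\}$ it is also a quantum quasi-homomorphism with respect to $\mu_i\Sigma$ and $\mu_i\Sigma'$; iterating this statement along $(i_1,\ldots,i_s)$ then gives the full proposition. Throughout I will use that mutation in a fixed direction $i$ of both seeds produces matrices whose relevant data are linked by the matrix $R'$ introduced just before Lemma~\ref{lem-mm}.

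First I would verify condition~(1), namely that the mutable exchange matrices of $\mu_i\Sigma$ and $\mu_i\Sigma'$ agree. Since $B=B'$ by hypothesis, and since mutation of $B$ in direction $i$ depends only on $B$ itself (the formula \eqref{eq:matrix-mutation} restricted to the first $n$ rows involves only entries $b_{jk}, b_{ki}$ with $j,k\le n$), we get $\mu_i(B)=\mu_i(B')$ immediately. The more substantive input is part~(2) of Lemma~\ref{lem-mm}, which gives $R'\mu_i\widetilde B=\mu_i\widetilde{B'}$; this records that $R'$ plays for the mutated seeds exactly the role that $R$ played for $\Sigma,\Sigma'$, i.e. $R'$ is the transition matrix of the mutated quasi-homomorphism.

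Next I would verify condition~(2). The key point is to identify $f$ evaluated on the new cluster and coefficient variables of $\mu_i\Sigma$ with monomials in the new variables of $\mu_i\Sigma'$, governed by the columns $r'_k$ of $R'$. For $k\neq i$ the variables are unchanged, so $r'_k=r_k$ handles these by the original hypothesis. For $k=i$, I would apply $f$ to the mutation formula \eqref{eq:cluster-variable-mutation} for $\mu_i(x_i)$, using Lemma~\ref{Lem-bar} to push $f$ through each Laurent monomial as $f(X^a)=X'^{Ra}$, and then compare the result with the corresponding exchange relation in $\mathcal A'_q$ written in the mutated seed $\mu_i\Sigma'$. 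The exponent bookkeeping $e'_i+R[b_i]_+-[b'_i]_+$ appearing in $r'_i$ is exactly what is needed to match $f(\mu_i(x_i))$ with $X'^{e'_i+\widetilde h_i'}$ in the mutated coordinates. Finally I must check the $\mathcal X$-variable condition $f(\widehat y_i^{\,\mathrm{new}})=\widehat y_i'^{\,\mathrm{new}}$ after mutation; here I would invoke that in the non-quantum setting this compatibility is exactly Fraser's \cite{Fra16}, and that Lemma~\ref{Matrixmul} together with $R'\mu_i\widetilde B=\mu_i\widetilde{B'}$ upgrades it to the quantum level, since the quasi-commutation data and the exchange data are the only ingredients distinguishing the quantum case.

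The main obstacle I anticipate is the quantum coefficient bookkeeping in the single-step verification of condition~(2) for $k=i$: one must show not merely that the exponent vectors match, but that the powers of $q^{1/2}$ produced by reordering the monomials $X'^{a_1 r_1}\cdots X'^{a_m r_m}$ under $f$ agree on both sides of the mutated exchange relation. This is precisely where Lemma~\ref{Matrixmul} ($R^t\Lambda' R=\Lambda$) and its mutated analogue are indispensable, because they guarantee that the quasi-commutation form is transported correctly and hence that the $q$-prefactors cancel as in the proof of Lemma~\ref{Lem-bar}. Once the exponents are shown to match combinatorially (a calculation formally identical to the commutative case of \cite{Fra16}, obtained by setting $q=1$), the compatibility of $\Lambda$ under $R'$ ensures the quantum coefficients follow automatically, and the induction closes.
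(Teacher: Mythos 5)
Your proposal is correct and follows essentially the same route as the paper's proof: reduce to a single mutation by induction, push $f$ through the exchange monomials for $\mu_i(x_i)$ via Lemma \ref{Lem-bar}, match the exponent vectors using Lemma \ref{lem-mm} (yielding exactly the column $r'_i=e'_i+R[b_i]_+-[b'_i]_+$), and settle the powers of $q^{1/2}$ in the $\hat{y}$-condition by bar-invariance (Proposition \ref{prop-bar}). One small remark: the ``mutated analogue'' of Lemma \ref{Matrixmul} that you invoke is not actually needed and is logically a consequence of the proposition rather than an ingredient; the paper sidesteps it precisely by the bar-invariance argument you also cite.
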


\begin{proof}
By induction, it suffices to prove the case that $s=1$. Denote $i=i_1$. Then by Lemma \ref{Lem-bar} we have

\begin{equation*}
\begin{array}{rcl}
f(\mu_ix_i)
&=&f(X^{-e_i+[b_i]_+}+X^{-e_i+[-b_i]_+})=X'^{R(-e_i+[b_i]_+)}+X'^{R(-e_i+[-b_i]_+)}\vspace{2.5pt}\\
&=& 
X'^{-e'_i+R[b_i]_+}+X'^{-e'_i+R[-b_i]_+}.
\end{array}
\end{equation*}

By Lemma \ref{lem-mm} (1), we have $[b_i']_+-[-b'_i]_+=b'_i=Rb_i=R[b_i]_+-R[-b_i]_+$. As $\mu_i(x_i')=X'^{-e_i'+[b'_i]_+}+X'^{-e_i'+[-b'_i]_+}$, it follows that
$$f(\mu_ix_i)=(\mu_i(X'))^{e'_i+R[b_i]_+-[b'_i]_+}.$$ 

As $R= \left(\begin{array}{cc} I_{n\times n}&0\\H&L\end{array}\right)$ and $B=B'$, we see that the first $n$ coordinates of $R[b_i]_+-[b'_i]_+$ are $0$. 

For any $i\neq j\in \{1,\ldots, n\}$, as the first $n$ coordinates of $\widetilde h_j$ are $0$,
we have 
$$f(x_j)=X'^{e'_j+\widetilde h_j}=(\mu_i(X'))^{e'_j+\widetilde h_j}.$$

For any $j\in \{n+1,\ldots,m\}$, as the first $n$ coordinates of $\widetilde l_j$ are $0$,
we have 
$$f(x_j)=X'^{\widetilde l_j}=(\mu_i(X'))^{\widetilde l_j}.$$

Denote by $d_j$ and $d'_j$ the $j$-th column of the matrix $\mu_i\widetilde B$ and $\mu_i\widetilde B'$, respectively. Denote $\mu_i(\hat y_j)=(\mu_i(X))^{d_j}$ and $\mu_i(\hat {y'}_j)=(\mu_i(X'))^{{d'}_j}$ for any $j\in \{1, \ldots, n\}$.

Next, we show that $f(\mu_i(\hat y_j))=\mu_i(\hat y'_j)$. If $j=i$, we have $\mu_i(\hat y_i)=\hat y_i^{-1}$ and $\mu_i(\hat {y'}_i)=\hat {y'}_i^{-1}$, and thus 
$$f(\mu_i(\hat y_i))=f(\hat y_i^{-1})=\hat {y'}_i^{-1}=\mu_i(\hat y'_i).$$

If $j\neq i$, we have $\mu_i(\hat y_j)=(\mu_i(X))^{\sum_k d_{kj}}$. Then by Proposition \ref{prop-bar} and Lemma \ref{lem-mm} (2), we have
\begin{align*}
f(\mu_i(\hat y_j)) & =(\mu_i(X'))^{d_
{ij}(e'_i+R[b_i]_+-[b'_i]_+)+\sum_{1\leq k(\neq i)\leq n} d_{kj}(e'_k+\widetilde h_k)+\sum_{n+1\leq k\leq m}d_{kj}\widetilde l_j} \\
& =(\mu_i(X'))^{d'_j}=\mu_i(\hat y'_j).
\end{align*} 

The proof is complete.
\end{proof} 

So roughly speaking, a quasi-homomorphism of quantum cluster algebras is an algebra homomorphism between quantum cluster algebras of the same type which sends a cluster variable to a cluster variable possibly multiplying a Laurent monomial in frozen variables, sends a frozen variable to a Laurent monomial in frozen variables, in a way that keeps the hatted $y$-variables and commutes with the mutations.
On the other hand, from a view point of linear algebra, the following proposition says that a quasi-homomorphism between quantum cluster algebras is equivalent to two matrix equations.

\begin{proposition}\label{prop:equ-quasi-homo}
Let $\x$ and $\x'$ respectively be clusters of $\A_q$ and $\A'_q$ with the same exchange matrix.
A map $f: \x \mapsto \x'$ induces a quasi-homomorphism from $\A_q$ to $\A'_q$ if and only if the following conditions are satisfied,
\begin{enumerate}
\item $f(x_i)=X'^{e'_i+\widetilde h_i}, 1\leq j \leq n,$ where $\t{h}_i=\left(\begin{smallmatrix}0\\ h_i\end{smallmatrix}\right)$ with $h_i\in \ZZ^{{m'}-n}$;

\item $f(x_{n+j})=X'^{\t{l}_j}, 1\leq j \leq m-n,$ where $\t{l}_j=\left(\begin{smallmatrix}0\\ l_j\end{smallmatrix}\right)$ with $l_j\in \ZZ^{{m'}-n}$;

\item $R\t{B}=\t{B'}$, where $L=(l_1,\ldots,l_{m-n})$, $H=(h_1,\ldots,h_{n})$ and $R= \left(\begin{array}{cc} I_{n\times n}&0\\H&L\end{array}\right)$;

\item $R^t\Lambda'R=\Lambda$.
\end{enumerate}
\end{proposition}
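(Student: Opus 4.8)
The plan is to prove the two directions separately; the forward direction is essentially a repackaging of Definition \ref{def:QQH} and the lemmas already established, while the converse carries the real content. For the ``only if'' direction, assume $f$ is a quantum quasi-homomorphism. Then (1) and (2) are literally the two displayed formulas of Definition \ref{def:QQH}(2), (3) is Lemma \ref{lem-mm}(1), and (4) is Lemma \ref{Matrixmul}; so this direction needs no new work.

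For the ``if'' direction, assume (1)--(4) and extend the given assignment on clusters to a candidate map on quantum tori by setting $f(X^{e_i}) = X'^{r_i}$, where $r_i$ is the $i$-th column of $R$; conditions (1) and (2) say precisely that $r_i = e'_i + \t h_i$ for $i \le n$ and $r_{n+j} = \t l_j$, so this is consistent with the prescribed images. To see that it extends to a well-defined $\CC_q$-algebra homomorphism $\T(\Sigma) \to \T(\Sigma')$, I would check that the defining quasi-commutation relations are respected: by \eqref{eq:qtorus2} the relation $X^{e_i}X^{e_j} = q^{\lambda_{ij}} X^{e_j}X^{e_i}$ is carried to $X'^{r_i}X'^{r_j} = q^{\lambda_{ij}} X'^{r_j}X'^{r_i}$, which holds in $\T(\Sigma')$ exactly when $\Lambda'(r_i,r_j) = r_i^t \Lambda' r_j = \lambda_{ij}$. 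Ranging over $i,j$ this is precisely condition (4), $R^t \Lambda' R = \Lambda$; since each $X'^{r_i}$ is a unit, $f$ is determined on all of $\T(\Sigma)$, and the computation of Lemma \ref{Lem-bar} (which uses only the homomorphism property and condition (4)) gives $f(X^a) = X'^{Ra}$ for every $a \in \ZZ^m$.

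It then remains to verify the two features that make $f$ a quasi-homomorphism: the $\hat y$-condition and the fact that $f$ maps into $\A'_q$ rather than merely into $\T(\Sigma')$. The former is immediate, since $f(\hat y_i) = f(X^{b_i}) = X'^{Rb_i}$ and the $i$-th column of condition (3), $R\t B = \t{B'}$, reads $Rb_i = b'_i$, whence $f(\hat y_i) = X'^{b'_i} = \hat y'_i$. For the latter I would argue by induction on the length of a mutation sequence that $f$ sends each cluster variable of $\A_q$ to a cluster variable of the corresponding seed of $\A'_q$ times a Laurent monomial in frozen variables. The inductive step is exactly the computation in the proof of Proposition \ref{prop:quasi-equ}: from $f(X^a) = X'^{Ra}$ and $Rb_i = b'_i$ one obtains $f(\mu_i(x_i)) = (\mu_i(X'))^{e'_i + R[b_i]_+ - [b'_i]_+} = (\mu_i(X'))^{r'_i}$, the mutated cluster variable of $\mu_i\Sigma'$ up to a frozen monomial. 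This last identity is the assertion that conditions (1) and (2) regenerate at the seeds $(\mu_i\Sigma, \mu_i\Sigma')$ with the matrix $R'$; condition (3) regenerates as Lemma \ref{lem-mm}(2), and condition (4) regenerates for free by the Lemma \ref{Matrixmul} argument applied to the mutated generators. Hence hypotheses (1)--(4) propagate along the entire mutation sequence, the induction closes, and $f(\A_q) \subseteq \A'_q$.

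The main obstacle is precisely this last step: confirming that $f$ carries the quantum cluster algebra into $\A'_q$ and not just the ambient quantum torus. The torus-level verification (condition (4)) and the $\hat y$-identity (condition (3)) are short; the delicate point is to set up the induction without circularity, namely to note that the mutation-compatibility computation of Proposition \ref{prop:quasi-equ} relies only on the homomorphism property together with conditions (3) and (4), rather than on the a priori assumption that $f$ is a quasi-homomorphism, and that these hypotheses are self-reproducing under mutation---with condition (4) at each new seed being forced by the homomorphism property rather than assumed. Once this bookkeeping is in place, the proposition follows.
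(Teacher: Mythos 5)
Your proof is correct and follows essentially the same route as the paper's: the ``only if'' direction cites Definition \ref{def:QQH}, Lemma \ref{lem-mm}(1) and Lemma \ref{Matrixmul}, and the ``if'' direction uses condition (4) to extend $f$ to a well-defined homomorphism of quantum tori, condition (3) to get $f(\hat y_k)=\hat y'_k$, and the mutation computation of Proposition \ref{prop:quasi-equ} to see that cluster variables land in $\A'_q$ up to frozen Laurent monomials. The only difference is one of completeness: where the paper compresses the last step into ``by a similar argument used in Proposition \ref{prop:quasi-equ},'' you make the induction explicit and correctly flag the non-circularity point --- that the mutation computation uses only the homomorphism property together with (3) and (4), which regenerate at mutated seeds via $R'$, Lemma \ref{lem-mm}(2), and the Lemma \ref{Matrixmul} argument.
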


\begin{proof}
``Only If Part'': (1) and (2) follow by condition (2) in Definition \ref{def:QQH}. 
(3) follows by Lemma \ref{lem-mm} (1), and (4) follows by Proposition \ref{Matrixmul}.

``If Part": from the proof of Proposition \ref{Matrixmul}, we know that (4) is equivalent to $f(x_i)f(x_j)=q^{\Lambda(e_i,e_j)}f(x_j)f(x_i)$ for any $i,j\in [1,m]$. Therefore, $f$ can be extended to a well-defined algebra homomorphism from $\mathbb{T}$ to $\mathbb{T}'$.
\begin{equation*}
\begin{array}{rcl}
f(\hat y_k)
&=&f(q^{-\frac{1}{2} \sum_{i<j} b_{ik}b_{jk}\lambda_{ij}}x_1^{b_{1k}}\cdots x_m^{b_{mk}})=q^{-\frac{1}{2}\sum_{i<j} b_{ik}b_{jk}\lambda_{ij}}X'^{b_{1k}r_1}\cdots X'^{b_{mk}r_m}\vspace{2.5pt}\\
&=& q^{-\frac{1}{2} \sum_{i<j} b_{ik}b_{jk}\lambda_{ij}}q^{\frac{1}{2} \sum_{i<j} b_{ik}b_{jk}\Lambda'(r_i,r_j)}X'^{b_{1k}r_1+\cdots +b_{mk}r_m}=X'^{b_{1k}r_1+\cdots+ b_{mk}r_m}\vspace{2.5pt}\\
&=&X'^{Rb_k}=X'^{b'_k}=\hat y'_k,
\end{array}
\end{equation*}
where the last but one equality follows from (3).

Furthermore, by a similar argument used in Proposition \ref{prop:quasi-equ}, $f$ maps any cluster variable of $\mathcal A_q$ to a cluster variable possibly multiplying a coefficient monomial of $\mathcal A'_q$. In particular, $f$ induces a quasi-homomorphism from $\mathcal A_q$ to $\mathcal A'_q$.
\end{proof}

To introduce a concept of quasi-isomorphism of quantum cluster algebras, we need the following proportional relation, which is a quantum analogy of the one introduced by Fraser \cite{Fra16}.
For elements $x,y$ in a quantum cluster algebra $\mathcal{A}_q$, we say that $x$ \emph{is proportional to} $y$, writing $x \propto y$, if $x = q^{\frac{\ell}{2}}p y$ for some $\ell\in \mathbb Z$ and some coefficient monomial $p\in \mathbb P$. Likewise, let $\mathcal{A}_q$ and $\mathcal{A}'_q$ be a pair of quantum cluster algebras, and let $f_1$ and $f_2$ be two quasi-homomorphisms from $\mathcal A_q$ to $\mathcal A'_q$, we say that $f_1$ \emph{is proportional to} $f_2$, and write $f_1 \propto f_2$, if $f_1(x) \propto f_2(x)$ for every cluster variable $x \in \mathcal{A}_q$.

A quasi-homomorphism $f \colon \mathcal{A}_q \to \mathcal{A}'_q$ is called a \emph{quasi-isomorphism} if there is a quasi-homomorphism $g \colon \mathcal{A}'_q \to \mathcal{A}_q$ such that the composite $g \circ f $ is proportional to the identity map on $\mathcal{A}_q$ and the composite $f \circ g$ is proportional to the identity map on $\mathcal{A}'_q$. A \emph{quasi-automorphism} is a quasi-isomorphism from a quantum cluster algebra to itself.
Then we are able to consider the \emph{quasi-automorphism group} ${\rm QAut}(\A_q)$ of a quantum cluster algebra $\A_q$, which consists of the equivalence classes of quasi-automorphisms on $\A_q$ up to proportionality.

Note that for a classical (non-quantum) cluster algebra, Assem, Schiffler, and Shramchenko introduced cluster automorphisms in \cite{ASS}. Similarly, one may consider the quantum version, a \emph{quantum cluster automorphism}, that is, a $\CC_q$-algebra automorphism of a quantum cluster algebra which sends a cluster to a cluster, sends a cluster variable to a cluster variable (without multiplying frozen variables), and sends a frozen variable to a frozen variable. Then it is not hard to see that a quantum cluster automorphism is a special quantum quasi-automorphism, and a quantum quasi-homomorphism $f$ is a quantum cluster automorphism if and only if $(\t{B},\Lambda)=(\t{B'},\Lambda')$.

It would be interesting to compute the \emph{automorphism group} ${\rm Aut}(\A_q)$ of a quantum cluster algebra, which consists of all the quantum cluster automorphisms, and compare it with the group ${\rm QAut}(\A_q)$.
This is in some sense trivial for the quantum cluster algebras arising from the surfaces.
More precisely, let $\A$ be the (classical non-quantum) cluster algebra arising from a unpunctured surface $(S,M)$ with coefficients arising from the boundary segments, and let $\A_{\rm triv}$ be the associated cluster algebra with trivial coefficients. Since ${\rm Aut}(\A)\cong {\rm Aut}(\A_{\rm triv})\cong {\rm MCG}(S,M)$ \cite{BS15,CZ16}, where ${\rm MCG}(S,M)$ is the mapping class group of the surface, triangulations $T$ and $T'$ are homotopic if and only if the associated matrices $\t{B}$ and $\t{B'}$ coincide. Moreover, the Lambda matrix of the associated quantum surface cluster algebra $\mathcal A_q$ comes from the triangulation in the sense of Muller \cite{M16}, thus the matrices $\Lambda$ and $\Lambda'$ arising from $T$ and $T'$ coincide: $\Lambda=\Lambda'$. Therefore we have an isomorphism ${\rm Aut}(\A_q)\cong {\rm Aut}(\A)$. 
On the other hand, it is proved in \cite{CS19} that  ${\rm QAut}(\A)\cong {\rm Aut}(\A)$.
By using the fact that the Lambda matrix is arising from the triangulation, the method used in \cite{CS19} can be extended to the quantum case, and we have an isomorphism of groups: ${\rm QAut}(\A_q)\cong {\rm Aut}(\A_q)$. To sum up, for (almost all) the surface cluster algebras, both for the quantum case and the non-quantum case, the groups we mentioned above are all isomorphic.

\section{Braid group action on quantum Grassmannian cluster algebras} \label{sec:braid group action on quantum Grassmannian cluster algebras}

In this section, we introduce a braid group action on the quantum Grassmannian cluster algebra $\CC_q[\Gr(k,n)]$.
We collect some background in the following, \cite{TT91, LZ98, BG02, Sco05, Lau06,Lau10, GL14}. For $\CC_q[\Gr(k,n)]$, we assume that $q$ is not a root of unity.

\subsection{Background on the quantum Grassmannian coordinate ring $\CC_q[\Gr(k,n)]$} \label{subsec:quantum Grassmannian coordinate ring}
The quantum matrix algebra $\CC_q[M(k,n)]$ is a $\CC_q$-algebra generated by $\{x_{ij} \mid 1 \leq i \leq k, 1 \leq j \leq n\}$ subject to the following relations.
\begin{alignat*}{2}
& x_{ij} x_{il}  = q  x_{il} x_{ij},  && j < l, \\
& x_{ij} x_{rj}  = q x_{rj} x_{ij}, && i<r, \\
& x_{ij} x_{rl} = x_{rl} x_{ij}, && i<r, j>l, \\
& x_{ij} x_{rl} = x_{rl} x_{ij} + (q - q^{-1})x_{il}x_{rj}, \quad  && i < r, j < l.
\end{alignat*}

Let $m,n \in \mathbb{Z}_{>0}$. For any $I \subset [1,m]$, $J \subset [1,n]$ with $|I|=|J|=l >0$, a quantum minor $\Delta_{I,J}$ is defined by
\begin{align*}
\Delta_q^{I,J} = \sum_{\sigma \in S_l} (-q)^{\ell(\sigma)} x_{i_1, j_{\sigma(1)}} \cdots x_{i_l, j_{\sigma(l)}},
\end{align*}
where $\{i_1 < \cdots < i_l\} = I$, $\{j_1< \cdots < j_l\}=J$, and $\ell(\sigma)$ is the length of the permutation $\sigma$.

For the Grassmannian $\Gr(k,n)$, its quantized coordinate ring $\CC_q[\Gr(k,n)]$
is the subalgebra of the quantum matrix algebra $\CC_q[M(k,n)]$ generated by the quantum Pl\"{u}cker coordinates
\[\Delta_q^J := \Delta_q^{I,J},\]
where $I= \{1, \ldots, k\}$ and $J \subset [1,n]$ with $|J|=k$, see \cite{TT91, GL14}. We assume that the frozen quantum Pl\"{u}cker coordinates $\Delta_q^J$ are invertible, where $J$ consists of $k$ consecutive integers (up to cyclic shift).

It is shown in \cite{LZ98} that two quantum Pl\"{u}cker coordinates $\Delta_q^I$, $\Delta_q^J$ quasi-commute if and only if $I, J$ are weakly separated. We recall the definition of ``weakly separated'' in the following.

\begin{definition} \label{def:weakly separted k-subsets}
Given two $k$-subsets $I$ and $J$ of $[1,n]$, denote by $\min(J)$ the minimal element in $J$ and by $\max(I)$ the maximal element in $I$, we write $I \prec J$ if $\max(I)<\min(J)$. The sets $I$ and $J$ are called weakly separated if at least one of the
following two conditions holds:
\begin{enumerate}
\item $J-I$ can be partitioned into a disjoint union $J-I = J' \sqcup J''$ so that $J' \prec I-J \prec J''$;

\item $I-J$ can be partitioned into a disjoint union $I-J = I' \sqcup I''$ so that $I' \prec J-I \prec I''$.
\end{enumerate}
\end{definition}

In {\cite[Theorem 2]{Sco05}}, Scott computed quasi-commutation relations on weakly separated subsets.

\begin{theorem} [{ \cite[Theorem 2]{Sco05} }] \label{thm:scott}
Suppose $I$ and $J$ are weakly separated $k$-subsets of $[1,n]$. Then we have $\Delta_q^I \Delta_q^J=q^{\Lambda(\Delta_q^I, \Delta_q^J)} \Delta_q^J \Delta_q^I$, where
\begin{align*}
\Lambda(\Delta_q^I, \Delta_q^J)  = \begin{cases} |J''| - |J'|, & \text{ if (1) in Definition \ref{def:weakly separted k-subsets} is satisfied}, \\
|I'| - |I''|, & \text{ if (2) in Definition \ref{def:weakly separted k-subsets} is satisfied}.
\end{cases}
\end{align*}
\end{theorem}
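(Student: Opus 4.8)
The plan is to reduce the assertion to the computation of a single integer. Since every quantum Pl\"ucker coordinate $\Delta_q^J$ is a quantum minor with the fixed row set $\{1,\ldots,k\}$, the statement is a claim about quasi-commutation of quantum minors sharing their rows, and I would work entirely inside $\CC_q[M(k,n)]$. By the result of Leclerc--Zelevinsky cited above (\cite{LZ98}), weak separation of $I$ and $J$ already guarantees that $\Delta_q^I$ and $\Delta_q^J$ quasi-commute, i.e. $\Delta_q^I\Delta_q^J = q^{\Lambda}\Delta_q^J\Delta_q^I$ for some integer $\Lambda$. Hence the only remaining task is to evaluate $\Lambda$ and match it with the combinatorial quantity $|J''|-|J'|$ in case (1) of Definition \ref{def:weakly separted k-subsets} (resp. $|I'|-|I''|$ in case (2)).

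To extract $\Lambda$ I would fix the row-lexicographic total order on the generators, $(a,b)<(c,d)$ if and only if $a<c$, or $a=c$ and $b<d$, together with the induced monomial order on the PBW-type normal-form basis of $\CC_q[M(k,n)]$ (the algebra is an iterated Ore extension, so such a basis exists). Writing $\{i_1<\cdots<i_k\}=I$ and expanding $\Delta_q^I=\sum_{\sigma\in S_k}(-q)^{\ell(\sigma)}x_{1,i_{\sigma(1)}}\cdots x_{k,i_{\sigma(k)}}$, and similarly $\Delta_q^J$, the product $\Delta_q^I\Delta_q^J$ is a sum of degree-$2k$ monomials in the $x_{ij}$. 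Each monomial is straightened into normal form by the four defining relations: the first three multiply by $q^{\pm1}$, while the fourth introduces the correction $(q-q^{-1})x_{il}x_{rj}$. I would single out the normal-form monomial $M$ obtained by sorting the $2k$ generators $x_{1,i_1},\ldots,x_{k,i_k},x_{1,j_1},\ldots,x_{k,j_k}$ into increasing order, and argue that its coefficient in $\Delta_q^I\Delta_q^J$ is a pure power of $q$ coming solely from the reorderings, the correction terms contributing only to strictly smaller monomials. The same monomial $M$ occurs in $\Delta_q^J\Delta_q^I$, and the ratio of the two coefficients is exactly $q^{\Lambda}$; this turns $\Lambda$ into a signed count of the generator pairs that must be transposed, weighted $+1$ or $-1$ according to which relation governs the swap.

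The combinatorial core is then to show that this signed inversion count equals $|J''|-|J'|$. Here I would use the weak-separation decomposition: writing $I=C\sqcup A$ and $J=C\sqcup J'\sqcup J''$ with $C=I\cap J$, $A=I-J$, and $J'\prec A\prec J''$, the shared columns $C$ and the pairs internal to a single minor cancel in the signed count, and only the crossings between $A$ and $J'\sqcup J''$ survive. Each element of $J''$ (lying to the right of $A$) contributes $+1$ and each element of $J'$ (to the left) contributes $-1$, yielding $|J''|-|J'|$; case (2) is symmetric, obtained by interchanging the roles of $I$ and $J$ (equivalently, via $\Delta_q^J\Delta_q^I=q^{-\Lambda}\Delta_q^I\Delta_q^J$). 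It is convenient to reduce first to the case $C=\emptyset$ by checking that removing a common column leaves $\Lambda$ unchanged while shrinking $k$ by one, so that the surviving crossings are transparent.

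The step I expect to be the main obstacle is controlling the straightening precisely enough to guarantee that $M$ receives contributions only from the sorting of generators and not from the quadratic correction $(q-q^{-1})x_{il}x_{rj}$ of the fourth relation, together with the bookkeeping that assigns the correct sign to each transposition. This is exactly where the hypothesis is used: weak separation forces the interleaving of $I$ and $J$ into the sandwiched shape $J'\prec A\prec J''$, so that every correction term produces a monomial strictly below $M$ and the net power collapses to the simple difference $|J''|-|J'|$.
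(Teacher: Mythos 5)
You should first know that the paper does not prove this statement at all: Theorem~\ref{thm:scott} is imported verbatim from Scott \cite[Theorem 2]{Sco05} (itself building on Leclerc--Zelevinsky \cite{LZ98}), so there is no internal proof to compare yours against, and your proposal has to be judged as a reconstruction of the cited result. At that level your architecture is reasonable and of the same general flavor as the computations in \cite{LZ98, Sco05}: invoke weak separation to know a priori that $\Delta_q^I\Delta_q^J=q^{\Lambda}\Delta_q^J\Delta_q^I$ for some $\Lambda\in\ZZ$, then pin down $\Lambda$ by computing the coefficient of a single distinguished PBW monomial on both sides (given quasi-commutation, any one monomial whose coefficients are computable determines $q^{\Lambda}$).

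Two concrete problems remain, one fixable and one substantive. First, your monomial-order claim is inverted: with the row-lexicographic order you fix, the correction term of the fourth relation is \emph{larger}, not smaller, than the sorted monomial --- from $x_{ij}x_{rl}=x_{rl}x_{ij}+(q-q^{-1})x_{il}x_{rj}$ with $i<r$, $j<l$, the normal-form monomial $x_{il}x_{rj}$ beats $x_{ij}x_{rl}$ lexicographically because the row-$i$ factor now carries the larger column. So "corrections contribute only to strictly smaller monomials" fails as stated; the repair is to track the lex-\emph{minimal} monomial $M$ (the identity--identity term attaches the smallest columns to the smallest rows and a row-by-row induction shows no other pair $(\sigma,\tau)$ sorts to it), with all corrections escaping strictly upward. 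Second, and more seriously, the combinatorial payoff is asserted rather than proved, and your description of it cannot be literally right: "each element of $J''$ contributes $+1$" via crossings between $A=I-J$ and $J''$ would count $|A|\cdot|J''|$ pairs, yet for $I=\{1,2\}$, $J=\{3,4\}$ one has four such pairs but $\Lambda=2$. What actually survives is a per-row bookkeeping: e.g.\ for $I=\{1,3\}$, $J=\{2,3\}$ the coefficient of $M=x_{11}x_{12}x_{23}^2$ is $1$ in $\Delta_q^I\Delta_q^J$ but $q^{-1}$ in $\Delta_q^J\Delta_q^I$, the discrepancy coming from the single same-row swap $x_{12}x_{11}=q^{-1}x_{11}x_{12}$ occurring on one side only, with the relation-3 and relation-4 exact parts contributing factor $1$ and equal-column crossings contributing asymmetrically. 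Proving that this aggregate equals $|J''|-|J'|$ under weak separation is elementary but is precisely the content of the theorem, and your intermediate reduction to $I\cap J=\emptyset$ is likewise a claim requiring its own argument. In short: right skeleton, but the inequality in the straightening step points the wrong way, and the decisive signed count is the part you have not done.
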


Every maximal weakly separated collection of $k$-subsets of $[1,n]$ determines a seed in the cluster algebra $\CC[\Gr(k,n)]$, where the cluster in the seed is a set of Pl\"{u}cker coordinates parameterized by the subsets in the collection, see \cite{LZ98, Pos06, Sco06, OPS15}. According to Theorem 6.1 in \cite{BZ05} and Theorem 7.6 in \cite{GL14}, the exchange graphs of $\CC[\Gr(k,n)]$ and $\CC_q[\Gr(k,n)]$ are isomorphic. Together with the results in \cite{LZ98, Pos06, Sco05, Sco06, OPS15}, we have that each maximal weakly separated collection also determines a seed in the quantum cluster algebra $\CC_q[\Gr(k,n)]$, where the Lambda matrix is given by the quasi-commutation relations of the associated Pl\"{u}cker coordinates in Theorem \ref{thm:scott}.

\subsection{Quantum cluster Laurent monomials and quantum cluster mutations on $\CC_q[\Gr(k,n)]$} \label{subsec:commutative quantum cluster monomials}

Let $\{x_1,\cdots, x_m\}$ be a quantum cluster. For any $k_1,\cdots, k_m\in \mathbb Z$, we use the notation $[x_1^{k_1}\cdots x_m^{k_m}]$ to denote the quantum cluster Laurent monomial $X^{k_1e_1+\cdots +k_me_m}$. Note that changing the order of $x_1, \ldots, x_m$ in $[x_1^{k_1}\cdots x_m^{k_m}]$ does not affect the result. 

Quantum Grassmannian cluster algebras was studied in \cite{GL11, GL14}. We recall their results in the case of $\CC_q[\Gr(3,6)]$. We compute quantum cluster variables of $\CC_q[\Gr(3,6)]$, starting from the cluster in Figure \ref{fig:initial cluster Gr36}.

\begin{figure} 
\begin{tikzpicture}[scale=0.39]
     \node at (0,0) (v00) {\fbox{$\Delta_q^{123}$}};
     \node at (0,-4) (v10) {$\Delta_q^{124}$};
     \node at (0,-8) (v20) {$\Delta_q^{125}$};
     \node at (0,-12) (v30) {\fbox{$\Delta_q^{126}$}};

     \node at (4,-4) (v11) {$\Delta_q^{134}$};
     \node at (4,-8) (v21) {$\Delta_q^{145}$};
     \node at (4,-12) (v31) {\fbox{$\Delta_q^{156}$}};

     \node at (8,-4) (v12) {\fbox{$\Delta_q^{234}$}};
     \node at (8,-8) (v22) {\fbox{$\Delta_q^{345}$}};
     \node at (8,-12) (v32) {\fbox{$\Delta_q^{456}$}};

     \draw[->] (v10)--(v00);
     \draw[->] (v20)--(v10);
     \draw[->] (v30)--(v20);

     \draw[->] (v21)--(v11);
     \draw[->] (v31)--(v21);

     \draw[->] (v22)--(v12);
     \draw[->] (v32)--(v22);

     \draw[->] (v11)--(v10);
     \draw[->] (v12)--(v11);

     \draw[->] (v21)--(v20);
     \draw[->] (v22)--(v21);

     \draw[->] (v31)--(v30);
     \draw[->] (v32)--(v31);

     \draw[->] (v10)--(v21);
     \draw[->] (v21)--(v32);

     \draw[->] (v20)--(v31);

     \draw[->] (v11)--(v22);

\end{tikzpicture} 
\caption{An initial cluster for $\CC_q[\Gr(3,6)]$.}
\label{fig:initial cluster Gr36}
\end{figure}

We have relations
\begin{align*}
& \Delta_q^{124}   \Delta_q^{123} = \frac{1}{q} \Delta_q^{123}   \Delta_q^{124}, \quad \Delta_q^{124}   \Delta_q^{145} = q \Delta_q^{145}   \Delta_q^{124}, \quad \Delta_q^{124}   \Delta_q^{125} = q \Delta_q^{125}   \Delta_q^{124}, \\
& \Delta_q^{124}   \Delta_q^{134} = q \Delta_q^{134}   \Delta_q^{124}, \quad \Delta_q^{125}   \Delta_q^{134} = \Delta_q^{134}   \Delta_q^{125}, \quad \Delta_q^{123}   \Delta_q^{145} = q^2 \Delta_q^{145}   \Delta_q^{123}.
\end{align*}

Mutating at $\Delta_q^{124}$, we have that
\begin{align*}
(\Delta_q^{124})' = [(\Delta_q^{124})^{-1}  \Delta_{q}^{125} \Delta_{q}^{134}] + [\Delta_{q}^{123} (\Delta_q^{124})^{-1}  \Delta_{q}^{145}].
\end{align*}
On the other hand, we have the relation
\begin{align*}
\Delta_q^{124} \Delta_{q}^{135} = q \Delta_{q}^{125}  \Delta_{q}^{134} + \frac{1}{q}  \Delta_{q}^{123} \Delta_{q}^{145}  = q [\Delta_{q}^{125} \Delta_{q}^{134}] + [\Delta_{q}^{123} \Delta_{q}^{145}].
\end{align*}
Therefore
\begin{align*}
\Delta_q^{135} = (\Delta_q^{124})^{-1}   (q [\Delta_{q}^{125} \Delta_{q}^{134}] + [\Delta_{q}^{123} \Delta_{q}^{145})] =  [(\Delta_q^{124})^{-1}  \Delta_{q}^{125} \Delta_{q}^{134}] +  [\Delta_{q}^{123} (\Delta_q^{124})^{-1} \Delta_{q}^{145}].
\end{align*}
It follows that $(\Delta_q^{124})' = \Delta_q^{135}$. Using quantum mutations, we see that the quantum cluster variables are:
\begin{align*}
& \Delta_q^{124}, \ \Delta_q^{125}, \ \Delta_q^{134}, \ \Delta_q^{135}, \ \Delta_q^{136}, \ \Delta_q^{145}, \ \Delta_q^{146}, \ \Delta_q^{235}, \ \Delta_q^{236}, \ \Delta_q^{245}, \ \Delta_q^{246}, \ \Delta_q^{256}, \ \Delta_q^{346}, \ \Delta_q^{356},  \\
& q^{-\frac{3}{2}}( \Delta_q^{124} \Delta_q^{356} - \frac{1}{q} \Delta_q^{123} \Delta_q^{456} ), \ q^{-\frac{1}{2}}( \Delta_q^{145} \Delta_q^{236} - \frac{1}{q^2} \Delta_q^{123} \Delta_q^{456} ).
\end{align*}

\subsection{A bijection between cluster Laurent monomials in Grassmannian cluster algebra and the corresponding quantum Grassmannian cluster algebra} \label{subsec:the bijection varphi between cluster monomials and quantum cluster monomials}

We define a map $\varphi$ from the set of cluster variables of $\CC[\Gr(k,n)]$ to the set of cluster variables in $\CC_q[\Gr(k,n)]$ as follows. First for $\Delta^J$ ($\Delta^J$ can be a frozen variable) in the initial cluster, we define $\varphi(\Delta^J) = \Delta_q^J$. We use induction to define $\varphi(x)$ for other cluster variables in $\CC[\Gr(k,n)]$. We apply the same mutation sequence to $\CC[\Gr(k,n)]$ and $\CC_q[\Gr(k,n)]$. At each step of mutation at some vertex with cluster variable $x$, we define $\varphi(x') = (\varphi(x))'$, where  $x'$ is the cluster variable of $\CC[\Gr(k,n)]$ obtained by mutation at $x$ and $(\varphi(x))'$ is the cluster variable obtained by mutation at $\varphi(x)$ in $\CC_q[\Gr(k,n)]$. For a cluster variable or frozen variable $x$, we define $\varphi(x^{-1})=(\varphi(x))^{-1}$. According to Theorem 6.1 in \cite{BZ05} and Theorem 7.6 in \cite{GL14}, the exchange graph of the cluster algebra $\CC[\Gr(k,n)]$ and the exchange graph of the quantum cluster algebra $\CC_q[\Gr(k,n)]$ are isomorphic.  
The map $\varphi$ is clearly a bijection between the set of cluster variables in $\CC[\Gr(k,n)]$ and the set of cluster variables in $\CC_q[\Gr(k,n)]$. Therefore every cluster variable or frozen variable in $\CC_q[\Gr(k,n)]$ is of the form $\varphi(x)$ for some cluster variable $x$ in $\CC[\Gr(k,n)]$.

For a cluster monomial $x_1\cdots x_r$ in $\CC[\Gr(k,n)]$ and for $a_1, \ldots, a_r \in \ZZ$, we define 
\begin{align}\label{equ:extend}
\varphi(x_1^{a_1} \cdots x_r^{a_r}) = [\varphi(x_1)^{a_1} \cdots \varphi(x_r)^{a_r}].
\end{align} 

In \cite{GL11, GL14}, it is shown that $\CC_q[\Gr(k,n)]$ has a quantum cluster algebra structure. The exchange relations in $\CC_q[\Gr(k,n)]$ are of the form $x'_i = [x^{-1}_i \prod_{j \to i} x_j] + [x^{-1}_i \prod_{i \to j} x_j]$. 
Moreover, the extended exchange matrices of $\CC_q[\Gr(k,n)]$ and $\CC[\Gr(k,n)]$ are the same. 
This implies that for any exchange relation $\tilde{x}'_i = \tilde{x}^{-1}_i \prod_{j \to i} \tilde{x}_j + \tilde{x}^{-1}_i \prod_{i \to j} \tilde{x}_j$ in $\CC[\Gr(k,n)]$, if we replace $\tilde{x}_j$'s by $\varphi(\tilde{x}_j)$, we obtain an exchange relation $\varphi(\tilde{x}_i)' = [\varphi(\tilde{x}_i)^{-1} \prod_{j \to i} \varphi(\tilde{x}_j)] + [\varphi(\tilde{x}_i)^{-1} \prod_{i \to j} \varphi(\tilde{x}_j)]$ in $\CC_q[\Gr(k,n)]$.  

\begin{remark}
The algebras $\CC[\Gr(k,n)]$, $\CC_q[\Gr(k,n)]$ are not isomorphic. Therefore $\varphi$ cannot be extended to an algebra isomorphism between $\CC[\Gr(k,n)]$ and $\CC_q[\Gr(k,n)]$.
\end{remark} 

\subsection{Fraser's braid group action on Grassmannian cluster algebras} \label{subsec:Fraser braid group action}
We now recall Fraser's braid group action on $\CC[\Gr(k,n)]$. Let $V$ be a $k$-dimensional complex vector space. An $n$-tuple of vectors $(v_1, \ldots, v_n)$ is called consecutively generic if every cyclically consecutive $k$-tuple of vectors is linearly independent, i.e., $\det( v_{i+1}, \ldots, v_{i+k} ) \ne 0$ for $i=1,\ldots,n$ where the indices are treated modulo $n$. Denote by $(V^n)^{\circ} \subset V^n$ the quasi-affine variety consisting of consecutively generic $n$-tuples. 

Let $d={\rm gcd}(k,n)$. For $i \in [1,d-1]$, the map $\sigma_i: (V^n)^{\circ} \to (V^n)^{\circ}$ is defined as follows, see Definition 5.2 and Equation (18) in Remark 5.6 of \cite{Fra17}. Divide an element $(v_1, \ldots, v_n)$ in $(V^n)^{\circ}$ into $\frac{n}{d}$ windows: $[v_{1+jd}, \ldots, v_{(j+1)d}]$, $j \in [0, \frac{n}{d}-1]$. The map $\sigma_i$ sends the first window to $[v_1, \ldots, v_{i-1}, v_{i+1}, w_1, v_{i+2}, \ldots, v_d]$, where 
\begin{align*}
    w_1 = \frac{\det(v_i, v_{i+2}, \ldots, v_{i+k})}{\det( v_{i+1}, v_{i+2}, \ldots, v_{i+k} )} v_{i+1} - v_{i}.
\end{align*}
The $\ell$th window is defined by the same recipe by $d$-periodically augmenting indices. The map $\sigma_i^{-1}$ us defined similarly. In Theorem 5.3 of \cite{Fra17}, Fraser proved that the maps $\sigma_i$, $\sigma_i^{-1}$ are inverse regular automorphisms on $(V^n)^{\circ}$. The pullbacks $\sigma_i^*$, $(\sigma_i^{-1})^*$ are inverse quasi-automorphisms on $\CC[\Gr(k,n)]$ (our notation $\Gr(k,n)$ in this paper has the same meaning as $\widetilde{\Gr}(k,n)$ in \cite{Fra17}. It is the affine cone over the Grassmannian of $k$-dimensional subspaces in $\CC^n$. With slight abuse of notation, we write the Zariski-open subset $\widetilde{\Gr}^{\circ}(k,n)$ of $\widetilde{\Gr}(k,n)$ cut out by the non-vanishing of the frozen variables as $\Gr(k,n)$).

With a slight abuse of notation, we also write $\sigma_i^*$, $(\sigma_i^{-1})^*$ as $\sigma_i$, $\sigma_i^{-1}$ respectively. 

\subsection{The maps $\sigma_i$ and the main results} \label{subsec:the maps sigmai and main results}

We now define a map $\sigma_i: \CC_q[\Gr(k,n)] \to \CC_q[\Gr(k,n)]$ for any $i \in [1, d-1]$, $d = {\rm gcd}(k,n)$. 

It is shown in Theorem 5.3 of \cite{Fra17} that in $\CC[\Gr(k,n)]$, every $\sigma_i$ ($i \in [1, d-1]$, $d={\rm gcd}(k,n)$) is a quasi-automorphism. Therefore in $\CC[\Gr(k,n)]$, for every cluster variable or a frozen variable $x$, $\sigma_i(x) = y_1 y_2 \cdots y_r \tilde{x}$, where $y_i$ is a frozen variable or the inverse of a frozen variable, and $\tilde{x}$ is a cluster variable. In $\CC_q[\Gr(k,n)]$, we define $\sigma_i(\varphi(x))$ to be $[\varphi(y_1)\varphi(y_2)\cdots \varphi(y_r) \varphi(\tilde{x})]$. As explained in Section \ref{subsec:the bijection varphi between cluster monomials and quantum cluster monomials}, by Theorem 6.1 in \cite{BZ05} and Theorem 7.6 in \cite{GL14}, the exchange graph of the quantum Grassmannian cluster algebra $\CC_q[\Gr(k,n)]$ is isomorphic to the exchange graph of the corresponding Grassmannian cluster algebra $\CC[\Gr(k,n)]$. Therefore there is one to one correspondence between cluster monomials in $\CC_q[\Gr(k,n)]$ and $\CC[\Gr(k,n)]$. Thus every cluster monomial $x$ in $\CC_q[\Gr(k,n)]$ has a unique image $\sigma_i(x)$. 

We will prove in Lemma \ref{lem:sigma_i preserves quasicommutative relations} that the map $\sigma_i$ preserve the quasi-commutative relations between quantum Pl\"{u}cker coordinates which are in the same cluster, and prove in Lemma \ref{lem:sigma_i preserves exchange relations} that $\sigma_i$ preserves exchange relations. Up to Conjecture \ref{conj:sigmai preserves r term plucker relations r is 4 or more}, the map $\sigma_i$ is well-defined. 
  
We define $\sigma_i(\varphi(x^{-1})) = (\sigma_i(\varphi(x)))^{-1}$. We extend $\sigma_i$ to $\CC_q[\Gr(k,n)]$ by defining $\sigma_i(x y) = \sigma_i(x) \sigma_i(y)$ and $\sigma_i(ax+by)=a\sigma_i(x)+b\sigma_i(y)$ for any cluster monomials $x, y$ and $a, b \in \mathbb{C}_q$. 

For a cluster monomial $x$ in $\CC_q[\Gr(k,n)]$, we define $\sigma_i^{-1}(x)$ in the same way as above by replacing $\sigma_i$ by $\sigma_i^{-1}$. 

In the following, we describe the map $\sigma_i$ explicitly on some particular initial cluster. 
We recall a nice initial cluster $\x$ in $\CC_q[\Gr(k,n)]$, which is used frequently when consider Grassmannian cluster algebras, see for example \cite{Fra17} and the quantum version in \cite{GL14}. The cluster is defined as
\begin{equation}\label{eq:cluster}
\x=\{\Delta_q^{\widehat{I}} \colon \ \widehat{I}=[1,a] \cup [a+b+1,b+k], \ 0 \leq a \leq k , \ 0 \leq b \leq n-k \}.
\end{equation}
The quiver of the associated matrix of $\x$
is given in Figure \ref{fig:quiver-cluster}, where the coordinate at position $(a,b)$ is
\begin{equation}\label{eq:coordinate}
\Delta_q^{\widehat{I}_{(a,b)}},~~~ \widehat{I}_{(a,b)}=[1,a] \cup [a+b+1,b+k].
\end{equation}

\begin{figure}[H]
\begin{align*}
\scalebox{0.56}{
\xymatrix{
(0,n-k)\ar[dr] & (1,n-k)\ar[dr] & (2,n-k)\ar[dr] & \cdots\ar[dr] & (k-1,n-k)             \\
(0,n-k-1)\ar[dr] & (1,n-k-1)\ar[dr]\ar[u]\ar[l] & (2,n-k-1)\ar[dr]\ar[u]\ar[l] & \cdots\ar[l]\ar[dr] & (k-1,n-k-1)\ar[u]\ar[l]               \\
(0,n-k-2)\ar[dr] & (1,n-k-2)\ar[dr]\ar[u]\ar[l] & (2,n-k-2)\ar[dr]\ar[u]\ar[l] & \cdots\ar[l]\ar[dr]  & (k-1,n-k-2)\ar[u]\ar[l]               \\
%%%%%%%%%%%%%%%%%%%%%%%%%%%%%%%%%%%%%%%%%%%%%%%%%%%%%%%%%%%%%%%%%%%%%%
{\begin{array}{c}  \cdot \\  \cdot \\  \cdot \end{array}}\ar[dr] &
{\begin{array}{c}  \cdot \\  \cdot \\  \cdot \end{array}}\ar[u]\ar[dr] &
{\begin{array}{c}  \cdot \\  \cdot \\  \cdot \end{array}}\ar[u]\ar[dr] &
{\begin{array}{c}  \cdot \\  \cdot \\  \cdot \end{array}}\ar[dr] &
{\begin{array}{c}  \cdot \\  \cdot \\  \cdot \end{array}}\ar[u] \\
(0,3)\ar[dr] & (1,3)\ar[dr]\ar[u]\ar[l] & (2,3)\ar[dr]\ar[u]\ar[l] & \cdots\ar[l]\ar[dr]  & (k-1,3)\ar[u]\ar[l]               \\
(0,2)\ar[dr] & (1,2)\ar[dr]\ar[u]\ar[l] & (2,2)\ar[dr]\ar[u]\ar[l] & \cdots\ar[l]\ar[dr]  & (k-1,2)\ar[u]\ar[l]               \\
(0,1) & (1,1)\ar[u]\ar[l] & (2,1)\ar[u]\ar[l] & \cdots\ar[l]  & (k-1,1)\ar[u]\ar[l]               \\
(0,0)\ar[urrrr]&&&&\\
} }
\end{align*}
\caption{The quiver of the cluster $\xx$ in \eqref{eq:cluster}.}
\label{fig:quiver-cluster}
\end{figure}

The frozen variables of $\CC_q[\Gr(k,n)]$ correspond to $k$-subsets of the form $[j, j+k-1]$ for some $j \in [1, n-k+1]$ or $[1, j] \cup [n-k+j+1, n]$ for some $j \in [1, k-1]$.

Let $d={\rm gcd}(k,n)$. Note that $d=1$ is trivial for our purpose, so we always assume that $d\geq 2$. For $i \in [1,d-1]$, we now compute the image of a particular cluster under the map $\sigma_i$. For $\widehat{I}$ in $\Delta_q^{\widehat{I}} \in {\bf x}$, we set
\begin{equation}
\label{eq:Imod}
I =
\begin{cases}
{\widehat{I}}, & \text{ if ${\widehat{I}}$ is frozen}; \\
{\widehat{I}}, & \text{ if $a+b \not \equiv i \mod d$}; \\
[1,a-1] \cup \{a+b\} \cup [a+b+2,b+k+1], & \text{ if $a+b \equiv i \mod d$.}
\end{cases}
\end{equation}
Then these modified subset is still a weakly separated collection and the associated set $\x(i)=\{\Delta_q^{I}\}$ is a cluster, see Section 5.1 in \cite{Fra17}.

Let $\mathfrak{S}_n$ denote the symmetric group on $n$ symbols. Define $\overline{\sigma_i} \in \mathfrak{S}_n$ as the product of commuting transpositions
$\overline{\sigma}_i = \prod_{j=0}^{\frac{n}{d}-1}(jd+i,jd+i+1).$ This permutation acts by switching adjacent numbers that are equivalent to $i$ and $i+1 \mod d$.  

For $\Gr(k,n)$, in the case that $n<a+k-1 \le n+k-1$, the interval $[a,a+k-1]$ means $[1,2,\ldots, a+k-n-1, a,a+1, \ldots, n]$. For example, in the case of $\Gr(3,6)$, $[5,6,7]$ means $[1,5,6]$.  The following lemma follows from \cite[Section 5]{Fra17} and the definition of $\sigma_i$ on $\CC_q[\Gr(k,n)]$ in the beginning of this subsection.
\begin{lemma}\label{lem:sigma on an initial cluster}
The map $\sigma_i$ on $\x(i)$ is given by:
\begin{equation}
\label{eq:sigma-map}
\scalemath{0.9}{
\sigma_i(\Delta_q^{I}) =
\begin{cases}
[\Delta_q^{[j-1,j+k-2]}(\Delta_q^{[j,j+k-1]})^{-1}\Delta_q^{[j+1,j+k]}], &
\text{ if $I=[j,j+k-1]$ with $j \equiv i+1\mod d$}; \\
\Delta_q^{I},& \text{ if $I=[j,j+k-1]$ with $j \not \equiv i+1\mod d$}; \\
\Delta_q^{\overline{\sigma}_i(I)},& \text{otherwise}.
\end{cases} }
\end{equation} 
\end{lemma}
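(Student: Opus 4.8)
The plan is to reduce the statement to the corresponding classical identities in $\CC[\Gr(k,n)]$, which are contained in \cite[Section 5]{Fra17}, and then transport them through the bijection $\varphi$ of Section~\ref{subsec:the bijection varphi between cluster monomials and quantum cluster monomials}. Recall from the beginning of this subsection that $\sigma_i$ on $\CC_q[\Gr(k,n)]$ is \emph{defined} by the rule that if Fraser's classical map satisfies $\sigma_i(x)=y_1y_2\cdots y_r\tilde{x}$, with each $y_\ell$ a frozen variable or its inverse and $\tilde{x}$ a cluster variable, then $\sigma_i(\varphi(x))=[\varphi(y_1)\varphi(y_2)\cdots\varphi(y_r)\varphi(\tilde{x})]$. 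Since $\varphi(\Delta^J)=\Delta_q^J$ for every $k$-subset $J$, once the three classical formulas are in hand the lemma follows by applying $\varphi$ term by term and inserting the bracket $[\,\cdot\,]$. Thus no genuinely quantum input (no $\Lambda$-bookkeeping or $q$-powers) enters this lemma beyond the definition of the bracket notation.

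First I would treat the mutable coordinates, namely those $I$ in $\x(i)$ which are not cyclic intervals $[j,j+k-1]$. Pulling back $\Delta^I$ along the geometric map $\sigma_i$ of Section~\ref{subsec:Fraser braid group action} and using multilinearity of the determinant, one checks window by window (positions congruent to $i$ and to $i+1\bmod d$) that the modification \eqref{eq:Imod} has been arranged precisely so that at every such window $I$ either contains both of the paired indices $jd+i,jd+i+1$ or only the one congruent to $i$; in the first situation the two modified columns contribute $\det(\dots,v_{jd+i},v_{jd+i+1},\dots)$ unchanged, and in the second the single modified column simply carries $v_{jd+i+1}$ into the slot of $v_{jd+i}$. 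Consequently no frozen correction factor survives and the pullback is the pure relabeling $\sigma_i(\Delta^I)=\Delta^{\overline{\sigma}_i(I)}$; applying $\varphi$ and bracketing gives $\sigma_i(\Delta_q^I)=\Delta_q^{\overline{\sigma}_i(I)}$, the third case of \eqref{eq:sigma-map}.

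Next I would treat the frozen cyclic intervals $I=[j,j+k-1]$. When $j\not\equiv i+1\bmod d$, the interval is fixed as a set by $\overline{\sigma}_i$ (no window straddles its boundary), and correspondingly each pair of columns congruent to $i,i+1\bmod d$ lies either entirely inside or entirely outside $I$; hence by the same multilinearity computation $\Delta^{[j,j+k-1]}$ is unchanged and bracketing yields $\sigma_i(\Delta_q^{[j,j+k-1]})=\Delta_q^{[j,j+k-1]}$. When $j\equiv i+1\bmod d$, the interval is not fixed by $\overline{\sigma}_i$ and the affected boundary columns must be expanded: substituting $w_1=\tfrac{\det(v_i,v_{i+2},\dots,v_{i+k})}{\det(v_{i+1},\dots,v_{i+k})}v_{i+1}-v_i$ into the determinant and simplifying the resulting consecutive minors as in \cite[Section 5]{Fra17} produces the classical identity $\sigma_i(\Delta^{[j,j+k-1]})=\Delta^{[j-1,j+k-2]}(\Delta^{[j,j+k-1]})^{-1}\Delta^{[j+1,j+k]}$, a Laurent monomial in frozen coordinates. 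Applying $\varphi$ and the bracket gives exactly the first case of \eqref{eq:sigma-map}.

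The real content is therefore entirely classical and lives in \cite[Section 5]{Fra17}: verifying the frozen ratio identity from the determinant expansion, and --- what I expect to be the main point requiring care --- confirming that on the mutable part of $\x(i)$ the quasi-automorphism $\sigma_i$ collapses to the pure permutation $\overline{\sigma}_i$ with all frozen correction factors absent. This is exactly why the particular collection $\x(i)$, with the modification \eqref{eq:Imod}, is chosen; the cyclic boundary windows are the delicate cases to check, and one must rule out any configuration in which an index congruent to $i+1$ lies in a mutable $I$ without its partner congruent to $i$. Once these classical formulas are confirmed, the passage to $\CC_q[\Gr(k,n)]$ is formal and immediate.
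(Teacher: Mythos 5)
Your proposal is correct and takes essentially the same route as the paper, which proves this lemma simply by invoking the classical formulas of \cite[Section 5]{Fra17} together with the definition of $\sigma_i$ on $\CC_q[\Gr(k,n)]$ via $\varphi$ and the bracket notation --- exactly your reduction. Your window-by-window determinant analysis (including the check that in $\x(i)$ no mutable subset contains an index $\equiv i+1 \bmod d$ without its partner, and the Pl\"{u}cker-relation simplification in the frozen case $j \equiv i+1$) is a faithful reconstruction of the classical content that the paper delegates to the citation.
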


\begin{example}
In $\CC_q[\Gr(3,6)]$, $\sigma_1(\Delta_q^{124}) = \Delta_q^{125}$, and 
\begin{align*}
\sigma_1(\Delta_q^{234}) & =[\Delta_q^{123}(\Delta_q^{234})^{-1}\Delta_q^{345}] = \Delta_q^{123} (\Delta_q^{234})^{-1} \Delta_q^{345}  = \Delta_q^{345} (\Delta_q^{234})^{-1} \Delta_q^{123} \\
& = \frac{1}{q} (\Delta_q^{234})^{-1} \Delta_q^{123} \Delta_q^{345} = \frac{1}{q}\Delta_q^{123} \Delta_q^{345} (\Delta_q^{234})^{-1} \\
& = q (\Delta_q^{234})^{-1} \Delta_q^{345} \Delta_q^{123} = q \Delta_q^{345} \Delta_q^{123} (\Delta_q^{234})^{-1}.
\end{align*}  
\end{example}

\begin{lemma} \label{lem:sigma_i preserves quasicommutative relations}
Let $i \in [d-1]$ and let $x_1, x_2$ be any pair of cluster variables (including frozen variables) in a cluster. Then for $\widetilde{x}_1 \in \{  x_1, x_1^{-1} \}$, $\widetilde{x}_2 \in \{x_2, x_2^{-1} \}$, we have that $\widetilde{x}_1 \widetilde{x}_2=q^c\widetilde{x}_2 \widetilde{x}_1$ for some $c$ implies that $\sigma_i(\widetilde{x}_1) \sigma_i(\widetilde{x}_2) =q^c \sigma_i(\widetilde{x}_2) \sigma_i(\widetilde{x}_1)$, and $\sigma_i^{-1}(\widetilde{x}_1) \sigma_i^{-1}(\widetilde{x}_2) =q^c \sigma_i^{-1}(\widetilde{x}_2) \sigma_i^{-1}(\widetilde{x}_1)$.
\end{lemma}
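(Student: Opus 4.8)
The plan is to reduce the statement to a verification at the level of the $\Lambda$-form, exploiting that every quasi-commutation exponent between quantum Pl\"ucker coordinates lying in a common cluster is a purely combinatorial quantity supplied by Scott's theorem (Theorem \ref{thm:scott}). First I would dispose of the inverses. Bilinearity of the exponent in the defining relation $X^a X^b = q^{\Lambda(a,b)} X^b X^a$ shows that replacing a factor $x_\ell$ by $x_\ell^{-1}$ multiplies the quasi-commutation exponent by $-1$; concretely, if $x_1 x_2 = q^{\lambda} x_2 x_1$ then $\widetilde{x}_1 \widetilde{x}_2 = q^{\epsilon_1\epsilon_2\lambda}\widetilde{x}_2\widetilde{x}_1$ with $\epsilon_\ell=\pm 1$ recording the chosen power. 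Since $\sigma_i$ sends inverses to inverses of images by definition, the same sign factor $\epsilon_1\epsilon_2$ appears identically on both sides of the desired identity. Hence it suffices to treat $\widetilde{x}_1=x_1=\Delta_q^{I}$ and $\widetilde{x}_2=x_2=\Delta_q^{J}$, and to prove that $\lambda=\Lambda(\Delta_q^{I},\Delta_q^{J})$ is unchanged after applying $\sigma_i$.

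Next I would split into cases according to the three clauses of Lemma \ref{lem:sigma on an initial cluster}. In the generic case both coordinates are sent to single Pl\"ucker coordinates $\Delta_q^{\overline{\sigma}_i(I)}$ and $\Delta_q^{\overline{\sigma}_i(J)}$ (a fixed coordinate being the special instance $\overline{\sigma}_i(I)=I$). The task then becomes a combinatorial claim: the permutation $\overline{\sigma}_i$, which only interchanges the adjacent integers $jd+i$ and $jd+i+1$, preserves weak separation of the subsets occurring in the cluster and preserves the cardinalities $|J'|,|J''|$ (resp. $|I'|,|I''|$) entering Scott's formula. I would establish this by inspecting the shape of the subsets $I$ recorded in \eqref{eq:Imod}: each relevant transposition either leaves a given subset invariant (when it contains both or neither of $jd+i,jd+i+1$) or moves a single boundary element, and in the latter situation one checks that the effect on the $\max/\min$ comparisons and on the partition $J=J'\sqcup J''$ is compensating, so that the Scott exponent is unchanged.

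The main obstacle is the remaining case, in which a frozen consecutive coordinate $\Delta_q^{[j,j+k-1]}$ with $j\equiv i+1\bmod d$ is sent to the quantum cluster Laurent monomial $M=[\Delta_q^{[j-1,j+k-2]}(\Delta_q^{[j,j+k-1]})^{-1}\Delta_q^{[j+1,j+k]}]$. Writing $M=X^{a}$, bilinearity of the $\Lambda$-form gives $\Lambda(a,\cdot)=\Lambda(\Delta_q^{[j-1,j+k-2]},\cdot)-\Lambda(\Delta_q^{[j,j+k-1]},\cdot)+\Lambda(\Delta_q^{[j+1,j+k]},\cdot)$, where each summand is evaluated through Scott's formula against $\sigma_i(\Delta_q^{J})$. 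I would compute these three contributions explicitly---both when $\sigma_i(\Delta_q^{J})$ is a single Pl\"ucker coordinate and when it is itself such a three-term monomial---and verify that the signed sum equals the original exponent $\lambda$. This bookkeeping, which must track the weak-separation data of three consecutive frozen intervals simultaneously, is where the real work lies, and I expect it to be the crux of the argument.

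Finally, since $\sigma_i^{-1}$ admits the analogous explicit description (with the roles of $i$ and $i+1$ exchanged in $\overline{\sigma}_i$ and in the frozen substitution), the identical case analysis yields the companion statement for $\sigma_i^{-1}$, completing the proof.
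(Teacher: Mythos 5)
Your case analysis on the distinguished cluster $\x(i)$ is essentially the paper's own argument: the reduction of inverses via bilinearity of $\Lambda$, the observation that $\overline{\sigma}_i$ carries the weak-separation partitions of Scott's theorem to partitions of equal cardinalities (the paper's Cases 2 and 3), and the telescoping computation of the three Scott exponents $\lambda_{J,K}-1$, $-\lambda_{J,K}$, $\lambda_{J,K}+1$ for the frozen coordinates sent to three-term Laurent monomials (the paper's Case 1). That part of your plan would go through as written.

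However, there is a genuine gap: the lemma quantifies over \emph{any} cluster, while your entire argument is premised on the explicit formulas of Lemma \ref{lem:sigma on an initial cluster}, which describe $\sigma_i$ only on the one special cluster $\x(i)$ of \eqref{eq:Imod}. For a general cluster the variables need not be quantum Pl\"{u}cker coordinates at all (e.g.\ the quadratic variables $y$ and $z$ in $\CC_q[\Gr(3,6)]$, on which Scott's theorem says nothing), and even for Pl\"{u}cker coordinates in other clusters the image under $\sigma_i$ is not given by $\overline{\sigma}_i$ or the three-term frozen substitution (e.g.\ $\sigma_1(\Delta_q^{135})=[(\Delta_q^{156})^{-1}z]$). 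The paper closes this gap with a mutation-induction step that your proposal omits: assuming the statement for a cluster $\x$, one writes the mutated variable as $x_r' = q^a x_r^{-1} x_{j_1}\cdots x_{j_t} + q^b x_r^{-1} x_{e_1}\cdots x_{e_s}$, observes that quasi-commutation of $x_f$ with $x_r'$ forces both summands to pick up the same exponent $q^c$, and then checks term-by-term (using that $\sigma_i$ is defined multiplicatively and additively on such expressions) that $\sigma_i(x_f)\sigma_i(x_r') = q^c \sigma_i(x_r')\sigma_i(x_f)$. Since every cluster is reached from $\x(i)$ by mutations, this propagates the base-case verification to all clusters; without some such argument your proof establishes the lemma only for pairs inside $\x(i)$, which is strictly weaker than the statement and insufficient for its later uses (notably the proof that $\sigma_i$ preserves exchange relations in arbitrary clusters).
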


\begin{proof}
We will prove the statement for $\sigma_i$. The proof of the result for $\sigma_i^{-1}$ is the similar. 

Fraser \cite{Fra17} has proved that in non-quantum case, $\sigma_i$ sends a cluster to a cluster up to frozen variables. Therefore by the definition of $\sigma_i$ in quantum case in the beginning of this subsection, we have that for two cluster variables $x_1, x_2$ in the same cluster, $\sigma_i(x_1)$, $\sigma_i(x_2)$ are cluster variables (possibly multiply by Laurent monomials of frozen variables). 

It suffices to prove the result for $x_1, x_2$ in one particular cluster. Indeed, suppose that the result is true for the cluster $\x=(x_1, \ldots, x_m)$. Let $x_r'$ be the quantum cluster variable obtained by mutating $x_r$ and let $\x' = (x_1, \ldots, x_r', \ldots, x_m)$. Then $x_r' = [x_r^{-1} x_{j_1} \cdots x_{j_t}]+ [x_r^{-1} x_{e_1} \cdots x_{e_s}]$, where   
$s, t, j_l, e_l$ are some integers. Therefore $x_r' = q^a x_r^{-1}   x_{j_1}  \cdots  x_{j_t} + q^b x_r^{-1}  x_{e_1}  \cdots  x_{e_s}$ for some $a, b$. Suppose that $x_f   x_r' = q^c x_r'   x_f$ for some $c$ and some $x_f\in \x$. We want to show that $\sigma_i(x_f) \sigma_i(x_r') = q^c \sigma_i(x_r') \sigma_i(x_f)$. We have 
\begin{align*}
x_f x_r' = x_f ( q^a x_r^{-1}   x_{j_1}  \cdots  x_{j_t} + q^b x_r^{-1}  x_{e_1}  \cdots  x_{e_s} ).
\end{align*}
Since $x_f, x_r, x_{j_l}, x_{e_l}$ are in the same cluster, we have 
\begin{align*}
x_f x_r' = (q^{a+d} x_r^{-1}   x_{j_1}  \cdots  x_{j_t} + q^{b+d'} x_r^{-1}  x_{e_1}  \cdots  x_{e_s} )   x_f,
\end{align*}
for some $d, d'$. Since we have assumed that $x_f   x_r' = q^c x_r'   x_f$, we have $d=d'=c$. Since we have assumed that the result is true for the cluster $\x=(x_1, \ldots, x_m)$, we have that
\begin{align*}
& \sigma_i(x_f) \sigma_i(x_r') = \sigma_i(x_f) \sigma_i( q^a x_r^{-1}   x_{j_1}  \cdots  x_{j_t} + q^b x_r^{-1}  x_{e_1}  \cdots  x_{e_s} ) \\
& = \sigma_i(x_f) ( q^a \sigma_i(x_r^{-1})   \sigma_i(x_{j_1})  \cdots  \sigma_i(x_{j_t}) + q^b \sigma_i(x_r^{-1})  \sigma_i(x_{e_1})  \cdots  \sigma_i(x_{e_s}) ) \\
& = ( q^{a+c} \sigma_i(x_r^{-1})   \sigma_i(x_{j_1})  \cdots  \sigma_i(x_{j_t})  + q^{b+c} \sigma_i(x_r^{-1})  \sigma_i(x_{e_1})  \cdots  \sigma_i(x_{e_s}) ) \sigma_i(x_f).
\end{align*}
Therefore $\sigma_i(x_f) \sigma_i(x_r') = q^c \sigma_i(x_r') \sigma_i(x_f)$. It follows that $x_f^{\pm 1}   {x_r'}^{\pm 1} = q^c {x_r'}^{\pm 1}   x_f^{\pm 1}$ for some $c$ implies that $\sigma_i(x_f^{\pm 1}) \sigma_i({x_r'}^{\pm 1}) = q^c \sigma_i({x_r'}^{\pm 1}) \sigma_i(x_f^{\pm 1})$.

We now prove the following: for cluster variables (including frozen variables) $\Delta^{J}_q$, $\Delta^{K}_q$ in $\x(i)$ (see Equation (\ref{eq:Imod})), we have that $\Delta^{J}_q \Delta^{K}_q = q^{\lambda_{J,K}} \Delta^{K}_q \Delta^{J}_q$ implies that $\sigma_i(\Delta^{J}_q) \sigma_i(\Delta^{K}_q) = q^{\lambda_{J,K}} \sigma_i(\Delta^{K}_q) \sigma_i(\Delta^{J}_q)$. From this, it follows that $(\Delta^{J}_q)^{\pm 1} (\Delta^{K}_q)^{\pm 1} = q^{\lambda_{J,K}} (\Delta^{K}_q)^{\pm 1} (\Delta^{J}_q)^{\pm 1}$ implies that $\sigma_i((\Delta^{J}_q)^{\pm 1}) \sigma_i((\Delta^{K}_q)^{\pm 1}) = q^{\lambda_{J,K}} \sigma_i((\Delta^{K}_q)^{\pm 1}) \sigma_i((\Delta^{J}_q)^{\pm 1})$.

The proof is straightforward by checking all cases in (\ref{eq:sigma-map}). Note that the cluster $\x(i)$ is a very special cluster. 

{\bf Case 1}. Both of $\Delta^{J}_q$ and $\Delta^{K}_q$ are frozen variables. According to the formula (\ref{eq:sigma-map}), $\sigma_i$ sends a frozen $\Delta^{J}_q$ either to itself or to $\Delta^{J'}_q (\Delta^{J}_q)^{-1} \Delta^{J''}_q$, where $J'$ is the shift of $J$ to the left by $1$ and $J''$ is the shift of $J$ to the right by $1$.

If both of $\Delta^{J}_q$ and $\Delta^{K}_q$ are not change under $\sigma_i$, it is clear that the quasi-commutative relation is preserved. 

Suppose that under the map $\sigma_i$, $\Delta^{J}_q$ is not changed, and $\Delta^{K}_q$ is changed to $\Delta^{K'}_q (\Delta^{K}_q)^{-1} \Delta^{K''}_q$. We have that $K = [a,b]$, $K'=[a-1, b-1]$, $K''=[a+1,b+1]$ for some $a, b$ (here we only write the case that $a-1\ge 1$, $b+1 \le n$; the cases that $a-1=0 \equiv n \ (\text{mod } n)$ or $b+1=n+1 \equiv 1 \ (\text{mod } n)$ are similar). According to Theorem \ref{thm:scott} and the expression of $K, K', K''$, we have that 
\begin{align*}
& [\Delta^{K'}_q (\Delta^{K}_q)^{-1} \Delta^{K''}_q ]= \Delta^{K'}_q (\Delta^{K}_q)^{-1} \Delta^{K''}_q,
\end{align*}
and
\begin{align*}
& \Delta^{J}_q (\Delta^{K}_q)^{-1} = q^{-\lambda_{J,K}} (\Delta^{K}_q)^{-1} \Delta^{J}_q, \quad \Delta^{J}_q \Delta^{K'}_q = q^{\lambda_{J,K} - 1} \Delta^{K'}_q \Delta^{J}_q, \\
& \Delta^{J}_q \Delta^{K''}_q = q^{\lambda_{J,K} + 1} \Delta^{K''}_q \Delta^{J}_q.
\end{align*}
Therefore 
\begin{align*}
\Delta^{J}_q  [\Delta^{K'}_q (\Delta^{K}_q)^{-1} \Delta^{K''}_q] = q^{\lambda_{J,K}} [\Delta^{K'}_q (\Delta^{K}_q)^{-1} \Delta^{K''}_q]   \Delta^{J}_q.
\end{align*}

For example, in $\CC[\Gr(3,6)]$ we have that $\Delta_q^{123} \Delta_q^{345} = q^2 \Delta_q^{345} \Delta_q^{123}$. Apply $\sigma_2$, we obtain
\begin{align*}
\sigma_2( \Delta_q^{123} ) = \Delta_q^{123}, \quad \sigma_2(\Delta_q^{345}) =[\Delta_q^{234} (\Delta_q^{345})^{-1} \Delta_q^{456}].
\end{align*}
Since
\begin{align*}
\scalemath{0.9}{
\Delta_q^{123} \Delta_q^{234} = q \Delta_q^{234} \Delta_q^{123}, \ \Delta_q^{123} (\Delta_q^{345})^{-1} = q^{-2} (\Delta_q^{345})^{-1} \Delta_q^{123}, \ \Delta_q^{123} \Delta_q^{456} = q^3 \Delta_q^{456} \Delta_q^{123}, }
\end{align*}
we have that $\sigma_2(\Delta_q^{123}) \sigma_2(\Delta_q^{345}) = q^2 \sigma_2(\Delta_q^{345}) \sigma_2(\Delta_q^{123})$.

Suppose that $\sigma_i(\Delta^{J}_q)=[\Delta^{J'}_q (\Delta^{J}_q)^{-1} \Delta^{J''}_q]$ and $\sigma_i(\Delta^{K}_q)=[\Delta^{K'}_q (\Delta^{K}_q)^{-1} \Delta^{K''}_q]$ for some $J'$, $J''$, $K'$, $K''$.
We have that $J = [a,b]$, $J'=[a-1, b-1]$, $J''=[a+1,b+1]$ for some $a, b$ and $K = [c,d]$, $K'=[c-1, d-1]$, $K''=[c+1,d+1]$ for some $c, d$ (here we only write the cases of $a-1>0$, $c-1>0$, $b+1\le n$, $d+1 \le n$; the cases of $a=1$ or $c=1$ or $b=n$ or $d=n$ are similar). 

Using the result in the case above, since $(\Delta^{J}_q)^{-1} \Delta^{K}_q = q^{-\lambda_{J,K}} \Delta^{K}_q (\Delta^{J}_q)^{-1}$, we have that 
\begin{align*}
    (\Delta^{J}_q)^{-1}  [\Delta^{K'}_q (\Delta^{K}_q)^{-1} \Delta^{K''}_q] = q^{-\lambda_{J,K}} [\Delta^{K'}_q (\Delta^{K}_q)^{-1} \Delta^{K''}_q]   (\Delta^{J}_q)^{-1}. 
\end{align*}
We have $\Delta^{J'}_q \Delta^{K}_q = q^{\lambda_{J,K}+1} \Delta^{K}_q \Delta^{J'}_q$. Therefore
\begin{align*}
    \Delta^{J'}_q  [\Delta^{K'}_q (\Delta^{K}_q)^{-1} \Delta^{K''}_q] = q^{\lambda_{J,K}+1} [\Delta^{K'}_q (\Delta^{K}_q)^{-1} \Delta^{K''}_q]   \Delta^{J'}_q. 
\end{align*}
We have $\Delta^{J''}_q \Delta^{K}_q = q^{\lambda_{J,K}-1} \Delta^{K}_q \Delta^{J''}_q$. Therefore
\begin{align*}
    \Delta^{J''}_q  [\Delta^{K'}_q (\Delta^{K}_q)^{-1} \Delta^{K''}_q] = q^{\lambda_{J,K}-1} [\Delta^{K'}_q (\Delta^{K}_q)^{-1} \Delta^{K''}_q]   \Delta^{J''}_q. 
\end{align*}
It follows that 
\begin{align*}
[\Delta^{J'}_q (\Delta^{J}_q)^{-1} \Delta^{J''}_q]  [\Delta^{K'}_q (\Delta^{K}_q)^{-1} \Delta^{K''}_q] = q^{\lambda_{J,K}} [\Delta^{K'}_q (\Delta^{K}_q)^{-1} \Delta^{K''}_q] [\Delta^{J'}_q (\Delta^{J}_q)^{-1} \Delta^{J''}_q].
\end{align*}

{\bf Case 2}. $\Delta^{J}_q$ is a frozen variable and $\Delta^{K}_q$ is a cluster variable. Here we only write the case that $\sigma_i$ does not change $\Delta^{J}_q$, $J$ is an interval, and $\sigma_i$ changes $\Delta^{K}_q$ to $\Delta^{\overline{\sigma}_i(K)}_q$. The other cases are similar. 

By Theorem \ref{thm:scott}, we either have $\lambda_{J,K} = |K_1|-|K_2|$, where $K - J = K_1 \sqcup K_2$, $K_1 \prec J-K \prec K_2$, or $\lambda_{J,K} = |J_2|-|J_1|$, where $J - K = J_1 \sqcup J_2$, $J_1 \prec K-J \prec J_2$.  

Recall that $\overline{\sigma}_i$ is the product $(i,i+1)(d+i, d+i+1)\cdots (n-d+i, n-d+i+1)$ of simple reflections. After applying $\sigma_i$, we have either $\overline{\sigma}_i(K) - J = \overline{\sigma}_i(K_1) \sqcup \overline{\sigma}_i(K_2)$, $\overline{\sigma}_i(K_1) \prec J-\overline{\sigma}_i(K) \prec \overline{\sigma}_i(K_2)$ or $J - \overline{\sigma}_i(K) = J_1 \sqcup J_2$, $J_1 \prec \overline{\sigma}_i(K)-J \prec J_2$. Since $|\overline{\sigma}_i(K_j)|=|K_j|$, $j=1,2$, we have $|\overline{\sigma}_i(K_1)|-|\overline{\sigma}_i(K_2)| = |K_1|-|K_2|$ and hence the quasi-commutative relation does not change.

For example, in $\CC[\Gr(3,6)]$ we have that $\Delta_q^{123} \Delta_q^{146} = q^2 \Delta_q^{146} \Delta_q^{123}$. We have that
\begin{align*}
\sigma_1( \Delta_q^{123} ) = \Delta_q^{123}, \quad \sigma_1(\Delta_q^{146}) = \Delta_q^{256},
\end{align*}
and $\Delta_q^{123} \Delta_q^{256} = q^2 \Delta_q^{256} \Delta_q^{123}$. Therefore $\sigma_1(\Delta_q^{123}) \sigma_1(\Delta_q^{256}) = q^2 \sigma_1(\Delta_q^{256}) \sigma_1(\Delta_q^{123})$.

{\bf Case 3}. Both of $\Delta^{J}_q$ and $\Delta^{K}_q$ are cluster variables. In this case, $\Delta^{J}_q$ is sent to $\Delta^{\overline{\sigma}_i(J)}_q$ and $\Delta^{K}_q$ is sent to $\Delta^{\overline{\sigma}_i(K)}_q$. This case is very similar to Case 2. 

By Theorem \ref{thm:scott}, we either have $\lambda_{J,K} = |K_1|-|K_2|$, where $K - J = K_1 \sqcup K_2$, $K_1 \prec J-K \prec K_2$, or $\lambda_{J,K} = |J_2|-|J_1|$, where $J - K = J_1 \sqcup J_2$, $J_1 \prec K-J \prec J_2$.  

After applying $\sigma_i$, we have either $\overline{\sigma}_i(K) - \overline{\sigma}_i(J) = \overline{\sigma}_i(K_1) \sqcup \overline{\sigma}_i(K_2)$, $\overline{\sigma}_i(K_1) \prec \overline{\sigma}_i(J)-\overline{\sigma}_i(K) \prec \overline{\sigma}_i(K_2)$ or $\overline{\sigma}_i(J) - \overline{\sigma}_i(K) = \overline{\sigma}_i(J_1) \sqcup \overline{\sigma}_i(J_2)$, $\overline{\sigma}_i(J_1) \prec \overline{\sigma}_i(K)-\overline{\sigma}_i(J) \prec \overline{\sigma}_i(J_2)$. Since $|\overline{\sigma}_i(K_j)|=|K_j|$ and $|\overline{\sigma}_i(J_j)|=|J_j|$, $j=1,2$, we have $|\overline{\sigma}_i(K_1)|-|\overline{\sigma}_i(K_2)| = |K_1|-|K_2|$ and $|\overline{\sigma}_i(J_2)|-|\overline{\sigma}_i(J_1)| = |J_2|-|J_1|$, and hence the quasi-commutative relation does not change. 

For example, in $\CC[\Gr(3,6)]$ we have that $\Delta_q^{124} \Delta_q^{134} = q \Delta_q^{134} \Delta_q^{124}$. We have that
\begin{align*}
\sigma_1( \Delta_q^{124} ) = \Delta_q^{125}, \quad \sigma_1(\Delta_q^{134}) = \Delta_q^{235},
\end{align*}
and $\Delta_q^{125} \Delta_q^{235} = q \Delta_q^{235} \Delta_q^{125}$. Therefore $\sigma_1(\Delta_q^{125}) \sigma_1(\Delta_q^{235}) = q \sigma_1(\Delta_q^{235}) \sigma_1(\Delta_q^{125})$.
\end{proof}
 
\begin{lemma} \label{lem:sigma_i on commutative quantum cluster monomials}
In $\CC_q[\Gr(k,n)]$, for a quantum cluster Laurent monomial $[x_1^{a_1}x_2^{a_2}\cdots x_r^{a_r}]$, where $x_1, \ldots, x_r$ are cluster variables or frozen variables which are in the same cluster, $a_i \in \ZZ$, we have that $\sigma_i([x_1^{a_1}x_2^{a_2}\cdots x_r^{a_r}]) = [\sigma_i(x_1)^{a_1} \cdots \sigma_i(x_r)^{a_r}]$ as 
quantum cluster Laurent monomials. 
\end{lemma}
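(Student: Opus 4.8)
The plan is to reduce the statement to a bookkeeping identity about $q$-powers and then invoke the preservation of quasi-commutation relations from Lemma \ref{lem:sigma_i preserves quasicommutative relations}. The essential point is that the scalar relating an ordered product of normalized monomials to its bracket depends only on the pairwise quasi-commutation exponents, and $\sigma_i$ leaves these exponents unchanged. Concretely, I would first work inside the quantum torus of the (single) cluster containing $x_1,\ldots,x_r$, so that each $x_l=X^{v_l}$ is a bar-invariant normalized monomial. Writing $x_s x_t = q^{\lambda_{st}} x_t x_s$ for $s<t$ (so that $\lambda_{st}=\Lambda(v_s,v_t)$ by \eqref{eq:qtorus2}), a direct computation with \eqref{eq:qtorus1} gives the normalization formula
\begin{equation*}
x_1^{a_1} x_2^{a_2}\cdots x_r^{a_r} = q^{\frac{1}{2}\sum_{s<t} a_s a_t \lambda_{st}}\,[x_1^{a_1}x_2^{a_2}\cdots x_r^{a_r}],
\end{equation*}
where the left side is the product in this fixed order and the right side is the order-independent monomial $X^{\sum_l a_l v_l}$. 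Set $P=\frac{1}{2}\sum_{s<t} a_s a_t \lambda_{st}$.

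Next I would record the analogous identity for the images. Since $\sigma_i$ sends a cluster to a cluster up to frozen variables (Fraser), the elements $\sigma_i(x_1),\ldots,\sigma_i(x_r)$ all lie in one quantum torus and each is a normalized monomial $X'^{w_l}$; hence the bracket $[\sigma_i(x_1)^{a_1}\cdots\sigma_i(x_r)^{a_r}]$ is defined and, by the same formula,
\begin{equation*}
\sigma_i(x_1)^{a_1}\cdots\sigma_i(x_r)^{a_r} = q^{P'}\,[\sigma_i(x_1)^{a_1}\cdots\sigma_i(x_r)^{a_r}], \qquad P'=\tfrac{1}{2}\sum_{s<t} a_s a_t \lambda'_{st},
\end{equation*}
where $\sigma_i(x_s)\sigma_i(x_t)=q^{\lambda'_{st}}\sigma_i(x_t)\sigma_i(x_s)$. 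Applying Lemma \ref{lem:sigma_i preserves quasicommutative relations} to each pair $x_s,x_t$ (and their inverses, as needed) yields $\lambda'_{st}=\lambda_{st}$ for all $s<t$, so $P'=P$.

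Finally I would combine these. Using $[x_1^{a_1}\cdots x_r^{a_r}]=q^{-P}x_1^{a_1}\cdots x_r^{a_r}$, the $\CC_q$-linearity of $\sigma_i$, and its multiplicativity together with $\sigma_i(x^{-1})=\sigma_i(x)^{-1}$ (so that $\sigma_i(x_l^{a_l})=\sigma_i(x_l)^{a_l}$), I obtain
\begin{equation*}
\sigma_i\!\left([x_1^{a_1}\cdots x_r^{a_r}]\right)=q^{-P}\,\sigma_i(x_1)^{a_1}\cdots\sigma_i(x_r)^{a_r}=q^{-P}q^{P'}[\sigma_i(x_1)^{a_1}\cdots\sigma_i(x_r)^{a_r}]=[\sigma_i(x_1)^{a_1}\cdots\sigma_i(x_r)^{a_r}],
\end{equation*}
since $P=P'$, which is exactly the claim.

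I do not expect a serious obstacle here: the statement is essentially a consistency check that the two $q$-shifts cancel, and its only substantive input is the equality $\lambda'_{st}=\lambda_{st}$ supplied by Lemma \ref{lem:sigma_i preserves quasicommutative relations}. The one point that demands care is that an individual $\sigma_i(x_l)$ need not be a single cluster or frozen variable — it may be a Laurent monomial in several frozen variables — so one must confirm the normalization formula is being applied to genuine normalized torus elements. This is automatic, because by definition each $\sigma_i(x_l)$ has the form $[\varphi(y_1)\cdots\varphi(\tilde{x})]$, a single bar-invariant element $X'^{w_l}$ of the image torus, and its internal factorization plays no role in the quasi-commutation bookkeeping above.
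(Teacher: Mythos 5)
Your proposal is correct and follows essentially the same route as the paper's own proof: the same normalization identity relating the ordered product to the bracket (with the cancelling $q^{\pm P}$ factors), the multiplicativity of $\sigma_i$ giving $\sigma_i(x_1^{a_1}\cdots x_r^{a_r})=\sigma_i(x_1)^{a_1}\cdots\sigma_i(x_r)^{a_r}$, and Lemma \ref{lem:sigma_i preserves quasicommutative relations} supplying $\Lambda(x_s,x_t)=\Lambda(\sigma_i(x_s),\sigma_i(x_t))$ so that $P=P'$. Your closing remark that each $\sigma_i(x_l)$ is a single normalized torus element $X'^{w_l}$ despite its frozen-variable factors is a worthwhile point of care that the paper handles implicitly via the same observation from \cite{Fra17}.
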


\begin{proof}
By \cite{Fra17} and the definition of $\sigma_i$ in the quantum setting in the beginning of this subsection, we have that $\sigma_i(x_1), \ldots, \sigma_i(x_r)$ are cluster variables in the same cluster (possibly multiply by Laurent monomials of frozen variables). 
Recall that 
\begin{align*}
[x_1^{a_1}x_2^{a_2}\cdots x_r^{a_r}] = q^{ - \frac{1}{2} \sum_{1 \le s <t \le r} a_s a_t  \Lambda( x_s, x_t) } x_1^{ a_1} \cdots  x_r^{ a_r},
\end{align*}
and 
\begin{align*}
[\sigma_i(x_1)^{a_1} \sigma_i(x_2)^{a_2}\cdots \sigma_i(x_r)^{a_r}] = q^{ - \frac{1}{2} \sum_{1 \le s <t \le r} a_s a_t  \Lambda( \sigma_i(x_s), \sigma_i(x_t)) } \sigma_i(x_1)^{ a_1} \cdots  \sigma_i(x_r)^{ a_r}.
\end{align*}
By the definition of $\sigma_i$ in the beginning of this subsection, we have that $\sigma_i(x_1^{ a_1} \cdots  x_r^{ a_r}) = \sigma_i(x_1)^{ a_1} \cdots  \sigma_i(x_r)^{ a_r}$. By Lemma \ref{lem:sigma_i preserves quasicommutative relations}, for any $1 \le s <t \le r$, we have that $\Lambda( x_s, x_t) = \Lambda( \sigma_i(x_s), \sigma_i(x_t))$. Therefore $\sigma_i([x_1^{a_1}x_2^{a_2}\cdots x_r^{a_r}]) = [\sigma_i(x_1)^{a_1} \cdots \sigma_i(x_r)^{a_r}]$. 
\end{proof}

\begin{lemma} \label{lem:sigma_i and varphi commute}
For any cluster monomial $x$ ($x$ can contain frozen variables or inverses of frozen variables) in $\CC[\Gr(k,n)]$ and any $i \in [d-1]$, $d = {\rm gcd}(k,n)$, we have $\varphi(\sigma_i(x))=\sigma_i(\varphi(x))$ and $\varphi(\sigma_i^{-1}(x))=\sigma_i^{-1}(\varphi(x))$. 
\end{lemma}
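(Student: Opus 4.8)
The plan is to reduce the whole statement to the defining formula for $\sigma_i$ on $\CC_q[\Gr(k,n)]$ given at the start of Section \ref{subsec:the maps sigmai and main results}, together with the multiplicativity of $\varphi$ in \eqref{equ:extend} and Lemma \ref{lem:sigma_i on commutative quantum cluster monomials}; once the bracket notation is unwound, the identity holds essentially by construction. I would prove it first for a single variable and then promote it to arbitrary cluster monomials by an exponent-vector bookkeeping argument.

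First I would treat the base case of a single cluster variable or frozen variable $x$. Fraser's $\sigma_i$ on $\CC[\Gr(k,n)]$ writes $\sigma_i(x) = y_1 y_2 \cdots y_r \tilde{x}$, with each $y_l$ a frozen variable or the inverse of one and $\tilde{x}$ a cluster variable, all lying in a common image cluster. By the definition of $\sigma_i$ on $\CC_q[\Gr(k,n)]$ we have $\sigma_i(\varphi(x)) = [\varphi(y_1)\varphi(y_2)\cdots \varphi(y_r)\varphi(\tilde{x})]$, while $\sigma_i(x)$ is a cluster monomial, so \eqref{equ:extend} gives $\varphi(\sigma_i(x)) = \varphi(y_1 \cdots y_r \tilde{x}) = [\varphi(y_1)\cdots \varphi(y_r)\varphi(\tilde{x})]$. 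Hence $\varphi(\sigma_i(x)) = \sigma_i(\varphi(x))$ for every single variable.

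Next I would pass to a general cluster monomial $x = x_1^{a_1}\cdots x_s^{a_s}$, where $x_1, \ldots, x_s$ lie in one cluster and $a_j \in \ZZ$. On the quantum side, $\varphi(x) = [\varphi(x_1)^{a_1}\cdots \varphi(x_s)^{a_s}]$ by \eqref{equ:extend}, and Lemma \ref{lem:sigma_i on commutative quantum cluster monomials} lets me pull $\sigma_i$ inside the bracket, so that $\sigma_i(\varphi(x)) = [\sigma_i(\varphi(x_1))^{a_1}\cdots \sigma_i(\varphi(x_s))^{a_s}]$. Substituting the base case $\sigma_i(\varphi(x_j)) = \varphi(\sigma_i(x_j))$ and collapsing the nested brackets—each $\sigma_i(\varphi(x_j))$ being a quantum cluster Laurent monomial $X^{v_j}$ in the common image cluster, so that $[\prod_j (X^{v_j})^{a_j}] = X^{\sum_j a_j v_j}$—gives $\sigma_i(\varphi(x)) = [\prod_j \varphi(\sigma_i(x_j))^{a_j}]$. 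On the classical side $\sigma_i$ is an algebra homomorphism, so $\sigma_i(x) = \prod_j \sigma_i(x_j)^{a_j}$, and since all factors $\sigma_i(x_j)$ involve variables of the single image cluster, \eqref{equ:extend} yields $\varphi(\sigma_i(x)) = [\prod_j \varphi(\sigma_i(x_j))^{a_j}]$. The two expressions coincide, proving $\varphi(\sigma_i(x)) = \sigma_i(\varphi(x))$. The identity for $\sigma_i^{-1}$ is obtained verbatim, replacing $\sigma_i$ by $\sigma_i^{-1}$ throughout.

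The only genuine point requiring care, and the step I expect to be the main obstacle, is justifying that all the variables appearing after applying $\sigma_i$ (the cluster variables $\tilde{x}_j$ and the frozen factors $y_{j,l}$) lie in a single cluster, so that \eqref{equ:extend}, the bracket notation, and Lemma \ref{lem:sigma_i on commutative quantum cluster monomials} all apply and the nested brackets collapse unambiguously. This is exactly Fraser's statement that $\sigma_i$ carries a cluster to a cluster up to frozen variables \cite{Fra17}, already invoked in the proofs of Lemmas \ref{lem:sigma_i preserves quasicommutative relations} and \ref{lem:sigma_i on commutative quantum cluster monomials}; once it is in hand, the remainder is the routine exponent-vector bookkeeping sketched above.
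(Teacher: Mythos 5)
Your proposal is correct and follows essentially the same route as the paper's proof: the single-variable case holds by the very definition of $\sigma_i$ on $\CC_q[\Gr(k,n)]$ together with \eqref{equ:extend}, and the general cluster monomial case is handled by combining \eqref{equ:extend} with Lemma \ref{lem:sigma_i on commutative quantum cluster monomials} to move $\sigma_i$ in and out of the bracket, with Fraser's cluster-to-cluster result guaranteeing everything lives in a single cluster. The only difference is cosmetic: you run the chain of equalities starting from $\sigma_i(\varphi(x))$ rather than $\varphi(\sigma_i(x))$ and spell out the nested-bracket collapse $[\prod_j (X^{v_j})^{a_j}] = X^{\sum_j a_j v_j}$, which the paper leaves implicit.
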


\begin{proof}
For a cluster variable $x$ in $\CC[\Gr(k,n)]$, by the definition in the beginning of this subsection, we have that 
\begin{align} \label{eq:sigma varphi is varphi sigma on cluster variable}
    \sigma_i(\varphi(x)) = \varphi(\sigma_i(x)).
\end{align}

Suppose that $x = x_1\cdots x_r$, where each $x_i$ is a cluster variable or a frozen variable or the inverse of a frozen variable. By Lemma \ref{lem:sigma_i on commutative quantum cluster monomials}, we have that $\sigma_i(x)=[\sigma_i(x_1)\cdots \sigma_i(x_r)]$. By definition (\ref{equ:extend}), 
\begin{align*}
\varphi(\sigma_i(x)) = \varphi([\sigma_i(x_1)\cdots \sigma_i(x_r)] ) =  [\varphi( \sigma_i(x_1)) \cdots \varphi(\sigma_i(x_r) )] = [\sigma_i(\varphi(x_1)) \cdots \sigma_i(\varphi(x_r))],
\end{align*}
where in the last step we used (\ref{eq:sigma varphi is varphi sigma on cluster variable}). 
Since $[\varphi(x_1)\cdots \varphi(x_r)]$ is a quantum Laurent cluster monomial, by Lemma \ref{lem:sigma_i on commutative quantum cluster monomials} and the definition of $\varphi$, we have that 
\begin{align*}
[\sigma_i(\varphi(x_1)) \cdots \sigma_i(\varphi(x_r))] = \sigma_i([\varphi(x_1)\cdots \varphi(x_r)]) = \sigma_i(\varphi([x_1 \cdots x_r])) = \sigma_i(\varphi(x)).
\end{align*}
 
The prove of $\varphi(\sigma_i^{-1}(x))=\sigma_i^{-1}(\varphi(x))$ is the similar. 
\end{proof}

\begin{remark} \label{remark:sigma of xprime is the same as sigma of expression of mutation formula}
Consider a quantum cluster variable 
\begin{align} \label{eq:xprime obtained from mutation of x in Remark}
x'_r = [x^{-1}_r \prod_{r \to l} x_l] + [x_r^{-1} \prod_{l \to r} x_l],
\end{align}
obtained by mutating $x_r$ in a cluster $\x=(x_1, \ldots, x_m)$. On the one hand, $\sigma_i(x'_r)$ is defined to be $\varphi(\sigma_i(\tilde{x}'_r))$, where $\tilde{x}'_r$ is the cluster variable in $\CC[\Gr(k,n)]$ corresponding to $x'_r$, where the map $\varphi$ is defined in the beginning of this subsection. On the other hand, $\sigma_i(x'_r)$ should also be $\sigma_i([x^{-1}_r \prod_{r \to l} x_l] + [x^{-1}_r \prod_{l \to r} x_l])$. By Lemma \ref{lem:sigma_i on commutative quantum cluster monomials}, $\sigma_i([x^{-1}_r \prod_{r \to l} x_l] + [x_r^{-1} \prod_{l \to r} x_l]) = [\sigma_i(x_r)^{-1} \prod_{r \to l} \sigma_i(x_l)] + [\sigma_i(x_r)^{-1} \prod_{l \to r} \sigma_i(x_l)]$. 

We now verify that $[\sigma_i(x_r)^{-1} \prod_{r \to l} \sigma_i(x_l)] + [\sigma_i(x_r)^{-1} \prod_{l \to r } \sigma_i(x_l)] = \varphi(\sigma_i(\tilde{x}'_r))$. In the beginning of this subsection, we see that the map $\varphi$ sends a cluster in $\CC[\Gr(k,n)]$ to a cluster in $\CC_q[\Gr(k,n)]$. Let $\tilde{x}_j$ be the cluster variable in $\CC[\Gr(k,n)]$ such that $\varphi(\tilde{x}_j)=x_j$. The exchange relation in $\CC[\Gr(k,n)]$ corresponding to (\ref{eq:xprime obtained from mutation of x in Remark}) is $\tilde{x}'_r = \tilde{x}^{-1}_r \prod_{r  \to l} \tilde{x}_l + \tilde{x}_r^{-1} \prod_{l \to r } \tilde{x}_l$. We have that
\begin{align*}
\sigma_i( \tilde{x}'_r ) = \sigma_i(\tilde{x}_r)^{-1} \prod_{r  \to l} \sigma_i(\tilde{x}_l) + \sigma_i(\tilde{x}_r)^{-1} \prod_{l \to r } \sigma_i(\tilde{x}_l).
\end{align*}
This is an exchange relation in $\CC[\Gr(k,n)]$ up to multiplying frozen variables. As discussed in the beginning of this subsection, if we replace $\sigma_i(\tilde{x}_r)$ by $\varphi(\sigma_i(\tilde{x}_r))$, then we obtain an exchange relation in $\CC_q[\Gr(k,n)]$ up to multiplying frozen variables. That is, $\varphi(\sigma_i( \tilde{x}'_r ))$ is equal to 
\begin{align*}
    [\varphi(\sigma_i(\tilde{x}_r))^{-1} \prod_{r  \to l} \varphi(\sigma_i(\tilde{x}_l))] + [\varphi(\sigma_i(\tilde{x}_r))^{-1} \prod_{l \to r } \varphi(\sigma_i(\tilde{x}_l))].
\end{align*}
By Lemma \ref{lem:sigma_i and varphi commute}, this is equal to 
\begin{align*}
    [\sigma_i(\varphi(\tilde{x}_r))^{-1} \prod_{r  \to l} \sigma_i(\varphi(\tilde{x}_l))] + [\sigma_i(\varphi(\tilde{x}_r))^{-1} \prod_{l \to r } \sigma_i(\varphi(\tilde{x}_l))],
\end{align*}
which is equal to $[\sigma_i(x_r)^{-1} \prod_{r  \to l} \sigma_i(x_l)] + [\sigma_i(x_r)^{-1} \prod_{l \to r } \sigma_i(x_l)]$.

\end{remark}

\begin{lemma} \label{lem:sigma_i preserves exchange relations}
For $i \in [d-1]$, $\sigma_i$ and $\sigma_i^{-1}$ preserve exchange relations in $\CC_q[\Gr(k,n)]$.
\end{lemma}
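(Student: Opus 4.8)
The plan is to deduce the statement directly from the computation carried out in Remark~\ref{remark:sigma of xprime is the same as sigma of expression of mutation formula}. Every exchange relation of $\CC_q[\Gr(k,n)]$ arises by mutating some cluster variable $x_r$ in a cluster $\x=(x_1,\ldots,x_m)$ and has the shape \eqref{eq:xprime obtained from mutation of x in Remark}, namely $x'_r=[x_r^{-1}\prod_{r\to l}x_l]+[x_r^{-1}\prod_{l\to r}x_l]$, whose right-hand side is a sum of two quantum cluster Laurent monomials living in the single cluster $\x$. To prove that $\sigma_i$ preserves this relation I would check two things: that applying $\sigma_i$ term by term produces a valid identity in $\CC_q[\Gr(k,n)]$, and that the resulting identity is itself the exchange relation attached to the mutation of $\sigma_i(x_r)$ in the image cluster.

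First I would compute the image of the right-hand side. Since every factor of each monomial lies in the single cluster $\x$, Lemma~\ref{lem:sigma_i on commutative quantum cluster monomials} applies and gives $\sigma_i([x_r^{-1}\prod_{r\to l}x_l])=[\sigma_i(x_r)^{-1}\prod_{r\to l}\sigma_i(x_l)]$, and likewise for the second monomial. On the other hand $\sigma_i(x'_r)$ is \emph{defined} as $\varphi(\sigma_i(\tilde{x}'_r))$, where $\tilde{x}'_r$ is the cluster variable of $\CC[\Gr(k,n)]$ corresponding to $x'_r$. That these two outputs agree, i.e.
$$\sigma_i(x'_r)=[\sigma_i(x_r)^{-1}\prod_{r\to l}\sigma_i(x_l)]+[\sigma_i(x_r)^{-1}\prod_{l\to r}\sigma_i(x_l)],$$
is exactly what Remark~\ref{remark:sigma of xprime is the same as sigma of expression of mutation formula} establishes, using the compatibility of $\varphi$ with exchange relations together with the intertwining $\sigma_i\circ\varphi=\varphi\circ\sigma_i$ from Lemma~\ref{lem:sigma_i and varphi commute}. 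This shows that $\sigma_i$ sends the exchange relation to a true identity in $\CC_q[\Gr(k,n)]$.

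It then remains to recognize the displayed identity as a \emph{genuine} exchange relation. Here I would invoke Fraser's result \cite{Fra17} that, in the non-quantum setting, $\sigma_i$ carries a cluster to a cluster up to frozen variables while preserving the mutable exchange matrix $B$; transporting this through $\varphi$ (which commutes with mutation and retains the same $B$) shows that $\{\sigma_i(x_1),\ldots,\sigma_i(x_m)\}$ is again a cluster, up to frozen factors, with the same quiver, so the neighbors $r\to l$ and $l\to r$ are precisely those of $\sigma_i(x_r)$. Because Lemma~\ref{lem:sigma_i preserves quasicommutative relations} guarantees that the quasi-commutation coefficients among the $\sigma_i(x_j)$ coincide with those among the $x_j$, the $q$-normalizations encoded in the bracket notation $[\,\cdot\,]$ match up, so the displayed identity is exactly the exchange relation obtained by mutating $\sigma_i(x_r)$. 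The argument for $\sigma_i^{-1}$ is word-for-word the same. The genuine difficulty is concentrated in the identity of the second paragraph—reconciling the direct definition $\sigma_i(x'_r)=\varphi(\sigma_i(\tilde{x}'_r))$ with the term-by-term image of the mutation formula—but since this is already discharged in the Remark, the lemma follows quickly.
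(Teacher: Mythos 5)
Your proposal is correct and takes essentially the same route as the paper's own proof: both rest on Lemma~\ref{lem:sigma_i on commutative quantum cluster monomials} and Remark~\ref{remark:sigma of xprime is the same as sigma of expression of mutation formula} to reconcile $\sigma_i(x'_r)$ with the term-by-term image of the mutation formula, and on Lemma~\ref{lem:sigma_i preserves quasicommutative relations} to see that the $q$-powers are unchanged. The only cosmetic difference is that the paper unpacks the brackets into non-commutative products $q^a x^{-1}c_1\cdots c_r + q^b x^{-1}d_1\cdots d_s$ and verifies $a=a'$, $b=b'$ explicitly before multiplying by $\sigma_i(x)$, whereas you keep everything in bracket form and phrase the matching of normalizations via the image cluster.
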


\begin{proof}
We will prove the result for $\sigma_i$. The proof of the result for $\sigma_i^{-1}$ is similar. 

It is shown in \cite{GL11, GL14} that $\CC_q[\Gr(k,n)]$ has a quantum cluster algebra structure. Let $x$ be a quantum cluster variable in a cluster. Mutate at $x$ we obtain that 
\begin{align} \label{eq:mutation at x obtain x prime}
x' = [x^{-1}\prod_{x_j \to x} x_j] + [x^{-1}\prod_{x \to x_j} x_j].
\end{align} 
We rewrite the identity using non-commutative product:
\begin{align} \label{eq:mutation at x obtain xprime noncommutative product}
x' = q^a x^{-1} c_1 \cdots  c_r + q^b x^{-1} d_1  \cdots  d_s,
\end{align}
where $c_1, \ldots, c_r, d_1, \ldots, d_s$ are $x_j$'s in (\ref{eq:mutation at x obtain x prime}) in some order, and $a, b \in \frac{1}{2}\ZZ$. Therefore
\begin{align*}
x x' = q^a c_1 \cdots  c_r + q^b d_1  \cdots  d_s.
\end{align*}
This is an exchange relation in $\CC_q[\Gr(k,n)]$. We now verify that 
\begin{align}  \label{eq:sigmax times sigmaxprime}
\sigma_i(x) \sigma_i(x') = q^a \sigma_i(c_1) \cdots  \sigma_i(c_r) + q^b \sigma_i(d_1)  \cdots  \sigma_i(d_s).
\end{align}

By Lemma \ref{lem:sigma_i on commutative quantum cluster monomials} and Remark \ref{remark:sigma of xprime is the same as sigma of expression of mutation formula}, apply $\sigma_i$ to (\ref{eq:mutation at x obtain x prime}), we have that 
\begin{align} \label{eq:sigmaxprime}
\sigma_i(x') = [\sigma_i(x)^{-1} \prod_{x_j \to x} \sigma_i(x_j)] + [\sigma_i(x)^{-1} \prod_{x \to x_j} \sigma_i(x_j)].
\end{align} 

Note that $c_1, \ldots, c_r, d_1, \ldots, d_s$ are $x_j$'s in (\ref{eq:mutation at x obtain x prime}) in some order. There are some $a', b'$ such that the right hand side of (\ref{eq:sigmaxprime}) is 
\begin{align*}
q^{a'} \sigma_i(x)^{-1} \sigma_i(c_1) \cdots  \sigma_i(c_r) + q^{b'} \sigma_i(x)^{-1} \sigma_i(d_1)  \cdots  \sigma_i(d_s).
\end{align*}
By Lemma \ref{lem:sigma_i preserves quasicommutative relations}, $c_j c_{j'} = q^{\lambda_{j,j'}}c_j c_{j'}$ implies that $\sigma_i(c_j) \sigma_i(c_{j'}) = q^{\lambda_{j,j'}}\sigma_i(c_j) \sigma_i(c_{j'})$. Therefore (\ref{eq:mutation at x obtain x prime}), (\ref{eq:mutation at x obtain xprime noncommutative product}), and (\ref{eq:sigmaxprime}) implies that $a=a'$, $b=b'$. Hence
\begin{align*}
\sigma_i(x') = q^{a} \sigma_i(x)^{-1} \sigma_i(c_1) \cdots  \sigma_i(c_r) + q^{b} \sigma_i(x)^{-1} \sigma_i(d_1)  \cdots  \sigma_i(d_s).
\end{align*}
Multiply $\sigma_i(x)$ from the left hand side, we obtain (\ref{eq:sigmax times sigmaxprime}).

\end{proof}

The $3$-term quantum Pl\"{u}cker relations are special exchange relations. To the best of our knowledge, it is still not known whether or not the quantum Grassmannian $\CC_q[\Gr(k,n)]$ has a presentation given by quantum Pl\"{u}cker coordinates modulo all quantum Pl\"{u}cker relations. It might be possible that $\CC_q[\Gr(k,n)]$ has other relations which cannot be derived from quantum Pl\"{u}cker relations. We conjecture the following.

\begin{conjecture} \label{conj:sigmai preserves r term plucker relations r is 4 or more}
The maps $\sigma_i$ and $\sigma_i^{-1}$ also preserve $r$-term quantum Pl\"{u}cker relations for $r \ge 4$ in $\CC_q[\Gr(k,n)]$ and other relations in $\CC_q[\Gr(k,n)]$ which cannot be derived from quantum Pl\"{u}cker relations (if any).
\end{conjecture}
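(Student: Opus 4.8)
The plan is to reduce the statement to the $r$-term quantum Pl\"ucker relations with $r\ge 4$, and to set aside, as genuinely out of reach, the clause about relations ``which cannot be derived from quantum Pl\"ucker relations'': no such relations are presently known, and one would need their explicit form even to state what must be checked. A quantum Pl\"ucker relation is a quadratic identity in $\CC_q[\Gr(k,n)]$,
$$
P_q \;:=\; \sum_{t} c_t\,\Delta_q^{A_t}\Delta_q^{B_t} \;=\; 0,
$$
written with a fixed ordering of the two factors in each term, where $A_t,B_t$ are $k$-subsets and each $c_t\in\CC_q$ is a power of $-q$ (see \cite{TT91, GL14}). The first, and essential, task is to record a \emph{uniform} formula showing that $c_t$ depends only on the \emph{combinatorial type} of the relation, that is, on the relative order of the indices and on the cardinalities of the pairwise overlaps and separations among the sets $A_t,B_t$.

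Granting such a formula, the argument follows the template of Lemma \ref{lem:sigma_i preserves quasicommutative relations}. Since $\sigma_i$ is extended multiplicatively, its effect on $P_q$ is the finite expression
$$
\sigma_i(P_q)\;=\;\sum_{t} c_t\,\sigma_i(\Delta_q^{A_t})\,\sigma_i(\Delta_q^{B_t}),
$$
computed from the prescribed values of $\sigma_i$ on the individual quantum minors. In the generic case each $\sigma_i(\Delta_q^{A_t})=\Delta_q^{\overline{\sigma}_i(A_t)}$, and because $\overline{\sigma}_i$ transposes only the adjacent integers $jd+i\leftrightarrow jd+i+1$ it preserves the relative order within each index set and the cardinalities of the separating subsets between them --- exactly the invariants exploited in Lemma \ref{lem:sigma_i preserves quasicommutative relations} via $|\overline{\sigma}_i(K_j)|=|K_j|$. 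Hence $\overline{\sigma}_i$ preserves the combinatorial type of the whole relation, so by the formula of the first task the index pattern $\{(\overline{\sigma}_i(A_t),\overline{\sigma}_i(B_t))\}$ carries the \emph{same} coefficients $c_t$ and is itself an instance of a quantum Pl\"ucker relation; thus $\sigma_i(P_q)=0$. Whenever reordering a quasi-commuting pair of factors is needed, Lemma \ref{lem:sigma_i preserves quasicommutative relations} guarantees that the reordering exponent is preserved as well, and at the classical level Fraser \cite{Fra17}, whose $\sigma_i$ is a quasi-automorphism, confirms independently that the monomial support and signs are those of a genuine Pl\"ucker relation up to frozen factors.

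Two families of cases require separate care. When some $A_t$ is a frozen interval, $\sigma_i$ replaces $\Delta_q^{A_t}$ by $[\Delta_q^{A'_t}(\Delta_q^{A_t})^{-1}\Delta_q^{A''_t}]$; here one uses, as in Case~1 of Lemma \ref{lem:sigma_i preserves quasicommutative relations}, that this expression quasi-commutes with every other factor exactly as $\Delta_q^{A_t}$ does, so the inserted frozen factors are common to all terms and cancel, leaving a genuine relation. Likewise the cyclic (wrap-around) index sets, where $[a,a+k-1]$ with $a+k-1>n$ is read modulo $n$, must be handled with the conventions fixed before Lemma \ref{lem:sigma on an initial cluster}. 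The argument for $\sigma_i^{-1}$ is identical.

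I expect the main obstacle to be precisely the first task together with its invariance: producing a single explicit formula for the exponents $c_t$ of the general $r$-term quantum Pl\"ucker relation, uniformly in $k,n,r$ and for all cyclic configurations, and verifying that it depends only on the combinatorial type preserved by $\overline{\sigma}_i$. This is delicate because, unlike the two-term quasi-commutation exponents controlled by Theorem \ref{thm:scott}, the higher relations mix many minors whose coefficients are not governed by any single pairwise statistic. More fundamentally, it is not even known whether $\CC_q[\Gr(k,n)]$ is presented by the quantum Pl\"ucker coordinates modulo the quantum Pl\"ucker relations, so the possible existence of further relations cannot be addressed with current tools --- which is exactly why the result is recorded here as a conjecture.
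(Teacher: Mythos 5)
This statement is Conjecture \ref{conj:sigmai preserves r term plucker relations r is 4 or more}: the paper does not prove it, and offers as evidence only a single $4$-term relation in $\CC_q[\Gr(3,6)]$ whose image under $\sigma_1$ is reduced by hand to a cubic identity among quantum minors that is then ``checked directly on computer.'' So there is no paper proof to compare against, and your proposal should be judged on its own merits as an attempted proof of an open statement. As such it has a genuine gap beyond the one you flag. Your reduction rests on the claim that ``in the generic case each $\sigma_i(\Delta_q^{A_t})=\Delta_q^{\overline{\sigma}_i(A_t)}$,'' imported from Lemma \ref{lem:sigma on an initial cluster}. But that formula is valid only for minors in the specially adapted cluster $\x(i)$ of \eqref{eq:Imod}; the minors occurring in an arbitrary $r$-term quantum Pl\"ucker relation do not all lie in $\x(i)$ (indeed the two factors $\Delta_q^{A_t}\Delta_q^{B_t}$ of a single term are typically not weakly separated, hence never lie in a common cluster), and for general minors $\sigma_i$ produces a cluster variable times a nontrivial Laurent monomial in frozen variables that varies from term to term --- see Table \ref{table:braid group action on Gr36}, e.g.\ $\sigma_1(\Delta_q^{235})=[\Delta_q^{123}\Delta_q^{146}(\Delta_q^{156})^{-1}(\Delta_q^{234})^{-1}\Delta_q^{345}]$. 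Consequently your assertion that ``the inserted frozen factors are common to all terms and cancel'' fails: in the paper's own $\Gr(3,6)$ example the factor $(\Delta_q^{156})^{-1}$ appears in some terms and not others, and after clearing it one is left not with another instance of a Pl\"ucker relation but with a genuinely new identity among triple products of minors, which is exactly why the verification required a direct computation.

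The obstacle you do identify --- a uniform formula for the coefficients $c_t$ of the general $r$-term quantum relation, invariant under $\overline{\sigma}_i$ --- is real, but even granting it your argument would not close, because the needed invariance of ``combinatorial type'' is itself doubtful: $\overline{\sigma}_i$ swaps $jd+i\leftrightarrow jd+i+1$, and when exactly one of these two elements belongs to a given index set the overlap and weak-separation pattern between sets can change, which is precisely why Lemma \ref{lem:sigma_i preserves quasicommutative relations} works only on the carefully chosen cluster $\x(i)$ and with the pairwise statistics of Theorem \ref{thm:scott}, for which no $r$-term analogue is available. Your decision to set aside the clause about relations not derivable from Pl\"ucker relations is consistent with the paper, which notes it is unknown whether $\CC_q[\Gr(k,n)]$ is even presented by the quantum Pl\"ucker relations. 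In short: your text is a reasonable strategy outline that correctly locates one obstruction, but the frozen-factor cancellation step is wrong as stated, and the statement remains a conjecture both in the paper and after your attempt.
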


\begin{remark}
Let $\Gr^{\circ}(k,n) \subset \Gr(k,n)$ be the quasi-affine variety defined by non-vanishing of the cyclically consecutive Pl\"{u}cker coordinates $P_{i+1, \ldots, i+k}$, $i \in [n]$, and the indices are treated modulo $n$. The coordinate ring $\CC[\Gr^{\circ}(k,n)]$ is the localization of $\CC[\Gr(k,n)]$ at the cyclically consecutive Pl\"{u}cker coordinates. The distinction between $\CC[\Gr^{\circ}(k,n)]$ and $\CC[\Gr(k,n)]$ is unimportant from a combinatorial perspective, and in \cite{Fra17} Fraser freely translate between them, see Section 3 in \cite{Fra17}. In \cite{Fra17}, Fraser proved that $\sigma_i$'s are regular automorphisms of $\Gr^{\circ}(k,n)$. These automorphisms induce algebra automorphisms on $\CC[\Gr^{\circ}(k,n)]$. 

In \cite{FWZ}, Section 6.8, Example 6.8.6, it is observed that in general, the ideal of relations among the cluster variables of $\CC[\Gr(k,n)]$ is not generated by the exchange relations. Therefore in quantum case, in general, the ideal of relations among the cluster variables of $\CC_q[\Gr(k,n)]$ is not generated by quasi-commutative relations and the exchange relations. 

In this paper, we only proved that $\sigma_i$'s preserve quasi-commutative relations and exchange relations, and $\sigma_i$'s are automorphisms on $\CC_q[\Gr(k,n)]$.
\end{remark}

We give an example that $\sigma_i$ preserves a $4$-term quantum Pl\"{u}cker relation to show evidence of Conjecture \ref{conj:sigmai preserves r term plucker relations r is 4 or more}.
\begin{example}
In $\CC_q[\Gr(3,6)]$, we have that 
\begin{align*} 
\Delta_q^{124} \Delta_{q}^{356} = \frac{1}{q} \Delta_q^{123} \Delta_q^{456} + q \Delta_q^{125} \Delta_q^{346} - q^2 \Delta_q^{126} \Delta_q^{345}.
\end{align*}
We now verify that $\sigma_1$ preserves the above identity. That is, by Table \ref{table:braid group action on Gr36}, we need to verify that
\begin{align*} 
\Delta_q^{125} [\Delta_{q}^{136}(\Delta_q^{156})^{-1}\Delta_q^{456} ] = \frac{1}{q} \Delta_q^{123} \Delta_q^{456} + q [\Delta_q^{126}(\Delta_q^{156})^{-1}\Delta_q^{145}] \Delta_q^{356} - q^2 \Delta_q^{126} \Delta_q^{345}.
\end{align*}
We have that 
\begin{align*}
& [\Delta_{q}^{136}(\Delta_q^{156})^{-1}\Delta_q^{456}] = q^{-1} \Delta_{q}^{136}   \Delta_q^{456}   (\Delta_q^{156})^{-1}, \\
& [\Delta_q^{126}(\Delta_q^{156})^{-1}\Delta_q^{145}] = q \Delta_q^{126} \Delta_q^{145} (\Delta_q^{156})^{-1}.
\end{align*}
Therefore we need to verify
\begin{align*} 
\scalemath{0.9}{
q^{-1} \Delta_q^{125} \Delta_{q}^{136}   \Delta_q^{456}   (\Delta_q^{156})^{-1} = \frac{1}{q} \Delta_q^{123} \Delta_q^{456} + q^2 \Delta_q^{126} \Delta_q^{145} (\Delta_q^{156})^{-1} \Delta_q^{356} - q^2 \Delta_q^{126} \Delta_q^{345}. }
\end{align*}
We have that $(\Delta_q^{156})^{-1} \Delta_q^{356} = q^{-1} \Delta_q^{356} (\Delta_q^{156})^{-1}$. Therefore we need to verify that
\begin{align*} 
\scalemath{0.9}{
q^{-1} \Delta_q^{125} \Delta_{q}^{136}   \Delta_q^{456}   (\Delta_q^{156})^{-1} = \frac{1}{q} \Delta_q^{123} \Delta_q^{456} + q \Delta_q^{126} \Delta_q^{145} \Delta_q^{356} (\Delta_q^{156})^{-1} - q^2 \Delta_q^{126} \Delta_q^{345}. }
\end{align*}
That is,
\begin{align*} 
\scalemath{0.9}{
 \Delta_q^{125} \Delta_{q}^{136}   \Delta_q^{456} =   \Delta_q^{123} \Delta_q^{456} \Delta_q^{156} + q^2 \Delta_q^{126} \Delta_q^{145} \Delta_q^{356} - q^3 \Delta_q^{126} \Delta_q^{345} \Delta_q^{156}. }
\end{align*}
This is checked directly on computer. 
\end{example}

\begin{lemma} \label{lem:sigmai and sigmai inverse are algebra automorphisms}
Up to Conjecture \ref{conj:sigmai preserves r term plucker relations r is 4 or more}, the maps $\sigma_i$ and $\sigma_i^{-1}$ are $\CC_q$-algebra automorphisms of the quantum cluster algebra $\CC_q[\Gr(k,n)]$.
\end{lemma}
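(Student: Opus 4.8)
The plan is to establish two things: first that $\sigma_i$ and $\sigma_i^{-1}$ extend to well-defined $\CC_q$-algebra endomorphisms of $\CC_q[\Gr(k,n)]$, and second that they are mutually inverse, so that each is bijective and hence an automorphism.

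For well-definedness I would view $\CC_q[\Gr(k,n)]$ as the quotient of the free $\CC_q$-algebra on the quantum Pl\"ucker coordinates $\Delta_q^J$ by the two-sided ideal $I$ of all relations among them. An assignment $\Delta_q^J \mapsto \sigma_i(\Delta_q^J)$ defines an algebra homomorphism on the free algebra, whose targets are quantum cluster Laurent monomials and hence genuine elements of $\CC_q[\Gr(k,n)]$ because the frozen variables are invertible. This homomorphism descends to $\CC_q[\Gr(k,n)]$ exactly when the images $\sigma_i(\Delta_q^J)$ satisfy every generator of $I$: if each generating relation maps to $0$, then (the homomorphism being multiplicative) all of $I$ maps to $0$. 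The generators of $I$ are the quasi-commutation relations, the $3$-term quantum Pl\"ucker relations (which are special exchange relations), the $r$-term quantum Pl\"ucker relations for $r\ge 4$, and any further relations not derivable from these. Every quasi-commuting, that is weakly separated, pair of Pl\"ucker coordinates lies in a common cluster, so Lemma~\ref{lem:sigma_i preserves quasicommutative relations} covers the first family; Lemma~\ref{lem:sigma_i preserves exchange relations} covers the exchange relations and hence the $3$-term Pl\"ucker relations; and Conjecture~\ref{conj:sigmai preserves r term plucker relations r is 4 or more} supplies the remaining families. Granting the conjecture, $\sigma_i$ and $\sigma_i^{-1}$ thus send each generating relation to a valid relation, so both extend to well-defined $\CC_q$-algebra endomorphisms.

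Next I would show the two endomorphisms are mutually inverse. Since both are algebra homomorphisms, it suffices to check that $\sigma_i\circ\sigma_i^{-1}$ and $\sigma_i^{-1}\circ\sigma_i$ act as the identity on a generating set, namely on the quantum cluster variables $\varphi(x)$. For such a generator, Lemma~\ref{lem:sigma_i and varphi commute} gives $\sigma_i^{-1}(\varphi(x))=\varphi(\sigma_i^{-1}(x))$, where $\sigma_i^{-1}(x)$ is a cluster monomial in $\CC[\Gr(k,n)]$; applying the same lemma a second time yields $\sigma_i(\varphi(\sigma_i^{-1}(x)))=\varphi(\sigma_i(\sigma_i^{-1}(x)))$. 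By Theorem~5.3 of \cite{Fra17}, the maps $\sigma_i$ and $\sigma_i^{-1}$ are inverse quasi-automorphisms of $\CC[\Gr(k,n)]$, so $\sigma_i(\sigma_i^{-1}(x))=x$ and the composite returns $\varphi(x)$; the symmetric computation gives $\sigma_i^{-1}\circ\sigma_i=\id$ on generators. As both composites are $\CC_q$-algebra endomorphisms agreeing with the identity on a generating set, they equal the identity on all of $\CC_q[\Gr(k,n)]$, so $\sigma_i$ is bijective with two-sided inverse $\sigma_i^{-1}$, hence a $\CC_q$-algebra automorphism.

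The main obstacle is the well-definedness step: the inference that preserving the listed relations is enough rests entirely on those relations generating the full ideal $I$ of relations among the quantum Pl\"ucker coordinates, which is precisely what is not known. This is exactly why Conjecture~\ref{conj:sigmai preserves r term plucker relations r is 4 or more} must be invoked, and why the statement is only established \emph{up to} that conjecture. A secondary technical point to handle carefully is that $\sigma_i$ sends a single generator to a Laurent monomial rather than to a single coordinate, so throughout both steps one must consistently use Lemma~\ref{lem:sigma_i on commutative quantum cluster monomials} to expand $\sigma_i$ of a monomial as the monomial of the $\sigma_i$-images, both when verifying that the defining relations are preserved and when computing the composites on generators.
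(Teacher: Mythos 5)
Your proposal is correct and takes essentially the same route as the paper: well-definedness of $\sigma_i$ and $\sigma_i^{-1}$ as $\CC_q$-algebra homomorphisms from Lemmas \ref{lem:sigma_i preserves quasicommutative relations} and \ref{lem:sigma_i preserves exchange relations} granting Conjecture \ref{conj:sigmai preserves r term plucker relations r is 4 or more}, and mutual inverseness checked on the generators $\varphi(x)$ via Lemma \ref{lem:sigma_i and varphi commute} together with Fraser's result that $\sigma_i,\sigma_i^{-1}$ are inverse on $\CC[\Gr(k,n)]$, then extended multiplicatively to products of quantum Pl\"ucker coordinates. Your free-algebra quotient framing of the well-definedness step, and the observation that any weakly separated pair lies in a common cluster, simply make explicit what the paper's proof leaves implicit.
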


\begin{proof}
Assume that Conjecture \ref{conj:sigmai preserves r term plucker relations r is 4 or more} is true. Then lemmas \ref{lem:sigma_i preserves quasicommutative relations} and \ref{lem:sigma_i preserves exchange relations} imply that $\sigma_i$ and $\sigma_i^{-1}$ are algebra homomorphisms of $\CC_q[\Gr(k,n)]$.

We now check that $\sigma_i$ and $\sigma_i^{-1}$ are inverses of each other on $\CC_q[\Gr(k,n)]$. In the beginning of this subsection, we see that every quantum Pl\"{u}cker coordinate of $\CC_q[\Gr(k,n)]$ is of the form $\varphi(x)$, where $x$ is a Pl\"{u}cker coordinate of $\CC[\Gr(k,n)]$. By Lemma \ref{lem:sigma_i and varphi commute}, we have that $\sigma_i^{-1}(\sigma_i(\varphi(x))) = \sigma_i^{-1}(\varphi(\sigma_i(x))) = \varphi(\sigma_i^{-1}(\sigma_i(x)))$. It is shown in \cite{Fra17} that $\sigma_i$, $\sigma_i^{-1}$ are inverses of each other on $\CC[\Gr(k,n)]$. Therefore $\varphi(\sigma_i^{-1}(\sigma_i(x)))=\varphi(x)$ and hence $\sigma_i^{-1}(\sigma_i(\varphi(x)))=\varphi(x)$. Similarly, $\sigma_i(\sigma_i^{-1}(\varphi(x)))=\varphi(x)$. 

Every element in $\CC_q[\Gr(k,n)]$ is a $\CC_q$-linear combination of products of quantum Pl\"{u}cker coordinates of $\CC_q[\Gr(k,n)]$. To show that $\sigma_i$ and $\sigma_i^{-1}$ are inverses of each other on $\CC_q[\Gr(k,n)]$, it suffices to shown that $\sigma_i^{-1}(\sigma_i(x_1 \cdots  x_r))=x_1 \cdots  x_r$ and $\sigma_i(\sigma_i^{-1}(x_1 \cdots  x_r))=x_1 \cdots  x_r$, where $x_i$'s are quantum Pl\"{u}cker coordinates. This follows from the fact that $\sigma_i$, $\sigma_i^{-1}$ are algebra homomorphisms on $\CC_q[\Gr(k,n)]$ and $\sigma_i^{-1}(\sigma_i(x_i))=x_i$, $\sigma_i(\sigma_i^{-1}(x_i))=x_i$ for quantum Pl\"{u}cker coordinates $x_i$.
\end{proof}

Our main result of this section is the following.
\begin{theorem}\label{thm: braids are QQH}
Let $k \le n$ and $d={\rm gcd}(k,n)$.  
\begin{enumerate}
\item[(i)] Assume that Conjecture \ref{conj:sigmai preserves r term plucker relations r is 4 or more} is ture. Then for any $1 \leq i \leq d-1$, the $\sigma_i$ is a quasi-automorphism of $\CC_q[\Gr(k,n)]$.

\item[(ii)] The maps $\sigma_i$'s give rise to a braid group action on $\CC_q[\Gr(k,n)]$, that is, they satisfy relations
\begin{align*}
& \sigma_i \sigma_j = \sigma_j \sigma_i, \quad |i-j|>1, \\
& \sigma_i \sigma_j \sigma_i = \sigma_j \sigma_i \sigma_j, \quad |i-j|=1.
\end{align*} 
\end{enumerate}
\end{theorem}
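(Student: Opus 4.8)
The plan is to handle the two parts by different means: part (i) via the linear-algebraic criterion of Proposition \ref{prop:equ-quasi-homo}, and part (ii) by reducing to Fraser's classical braid relations through the bijection $\varphi$.

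For part (i), I would first invoke the conjecture so that $\sigma_i$ and $\sigma_i^{-1}$ are genuine $\CC_q$-algebra automorphisms of $\CC_q[\Gr(k,n)]$ by Lemma \ref{lem:sigmai and sigmai inverse are algebra automorphisms}. I would then apply Proposition \ref{prop:equ-quasi-homo} with the initial cluster $\x = \x(i)$ of \eqref{eq:Imod} and its image $\x' = \sigma_i(\x(i))$, checking the four conditions in turn. Conditions (1) and (2) are read off directly from the explicit formula of Lemma \ref{lem:sigma on an initial cluster}: a non-frozen Pl\"{u}cker coordinate is sent to the single Pl\"{u}cker coordinate $\Delta_q^{\overline{\sigma}_i(I)}$, whereas a frozen interval is sent either to itself or to $[\Delta_q^{[j-1,j+k-2]}(\Delta_q^{[j,j+k-1]})^{-1}\Delta_q^{[j+1,j+k]}]$, a Laurent monomial in frozen variables. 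Condition (4), $R^t\Lambda' R = \Lambda$, is exactly the statement that $\sigma_i$ preserves every quasi-commutation exponent, which is Lemma \ref{lem:sigma_i preserves quasicommutative relations}. Condition (3), $R\widetilde{B} = \widetilde{B'}$, encodes that $\sigma_i$ carries the mutable exchange matrix to itself, and follows by combining Lemma \ref{lem:sigma_i preserves exchange relations} with the fact that $\sigma_i(\x(i))$ is an honest quantum seed with the same exchange matrix $B$ (from Fraser's classical computation and the exchange-graph isomorphism of Theorem 6.1 in \cite{BZ05} and Theorem 7.6 in \cite{GL14}). With all four conditions verified, Proposition \ref{prop:equ-quasi-homo} gives that $\sigma_i$ is a quantum quasi-homomorphism; the symmetric argument applies to $\sigma_i^{-1}$, and since these maps are mutually inverse by Lemma \ref{lem:sigmai and sigmai inverse are algebra automorphisms}, their composites equal, hence are proportional to, the identity. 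Thus $\sigma_i$ is a quasi-isomorphism from $\CC_q[\Gr(k,n)]$ to itself, i.e.\ a quasi-automorphism.

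For part (ii), I would use that each $\sigma_i$ is an algebra automorphism (again Lemma \ref{lem:sigmai and sigmai inverse are algebra automorphisms}), so that any word in the $\sigma_i$'s is an algebra endomorphism of $\CC_q[\Gr(k,n)]$; two such endomorphisms are equal once they agree on the quantum Pl\"{u}cker coordinates, which generate the algebra. Since every quantum Pl\"{u}cker coordinate equals $\varphi(x)$ for a classical Pl\"{u}cker coordinate $x$, each braid relation need only be checked after evaluation at $\varphi(x)$. Iterating the intertwining identity of Lemma \ref{lem:sigma_i and varphi commute} yields, for $|i-j|>1$,
\begin{align*}
\sigma_i\sigma_j(\varphi(x)) = \varphi(\sigma_i\sigma_j(x)), \qquad \sigma_j\sigma_i(\varphi(x)) = \varphi(\sigma_j\sigma_i(x)),
\end{align*}
and analogously $\sigma_i\sigma_j\sigma_i(\varphi(x)) = \varphi(\sigma_i\sigma_j\sigma_i(x))$ and $\sigma_j\sigma_i\sigma_j(\varphi(x)) = \varphi(\sigma_j\sigma_i\sigma_j(x))$ for $|i-j|=1$. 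Fraser's theorem that the classical $\sigma_i$ satisfy the braid relations on $\CC[\Gr(k,n)]$ equates the arguments of $\varphi$ on the two sides in each case, and applying $\varphi$ establishes the relations on generators, hence on all of $\CC_q[\Gr(k,n)]$.

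The main obstacle lies in part (i), at condition (3) of Proposition \ref{prop:equ-quasi-homo}: one must confirm that $\sigma_i(\x(i))$ is not merely a family of quasi-commuting Laurent monomials but a genuine quantum seed sharing the mutable exchange matrix $B$, so that $R\widetilde{B} = \widetilde{B'}$ truly holds. This is the point where the compatibility between Fraser's classical combinatorics and the quantum cluster structure must be invoked carefully. By contrast, once $\sigma_i$ is known to be an algebra automorphism, part (ii) is essentially formal, reducing cleanly to the classical braid relations with $\varphi$ serving only as a bookkeeping device.
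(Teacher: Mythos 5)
Your proposal is correct and takes essentially the same approach as the paper: part (i) verifies the four conditions of Proposition \ref{prop:equ-quasi-homo} (condition (4) from Lemma \ref{lem:sigma_i preserves quasicommutative relations}, the matrix conditions from Fraser's classical computation, i.e.\ \cite[Lemma 5.11]{Fra17}), and part (ii) reduces the braid relations to Fraser's classical ones by checking on quantum Pl\"{u}cker generators via the intertwining identity of Lemma \ref{lem:sigma_i and varphi commute} together with the fact that the $\sigma_i$ are algebra automorphisms (Lemma \ref{lem:sigmai and sigmai inverse are algebra automorphisms}). The only cosmetic difference is that the paper cites \cite[Lemma 5.11]{Fra17} directly for conditions (1) and (3) of Proposition \ref{prop:equ-quasi-homo}, whereas you route condition (3) through Lemma \ref{lem:sigma_i preserves exchange relations} and the exchange-graph isomorphism before appealing to the same classical computation --- an equivalent justification.
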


\begin{remark}
In the case that $k=2$, the quantum Pl\"{u}cker relations in $\CC_q[{\rm Gr}(2,n)]$ are $3$-term relations, see \cite{GL11}. Therefore we have that Theorem \ref{thm: braids are QQH} is true without the assumption that Conjecture \ref{conj:sigmai preserves r term plucker relations r is 4 or more} is true.
\end{remark}

We first give some examples in the next two subsections and we prove Theorem \ref{thm: braids are QQH} in Section \ref{sec:prove main theorem}. 

\subsection{Braid group action on $\CC_q[\Gr(2,4)]$} \label{sec:Gr24}
By definition, $\sigma_1$ sends
\begin{align*}
& \Delta_{q}^{12} \mapsto \Delta_{q}^{12}, \ \Delta_{q}^{13} \mapsto \Delta_{q}^{24}, \ \Delta_{q}^{14} \mapsto \Delta_{q}^{12} (\Delta_{q}^{14})^{-1} \Delta_{q}^{34}, \ \Delta_{q}^{23} \mapsto \Delta_{q}^{12} (\Delta_{q}^{23})^{-1} \Delta_{q}^{34}, \ \Delta_{q}^{34} \mapsto \Delta_{q}^{34}.
\end{align*}

We have that
\begin{align*}
\Delta_q^{24} & = (\Delta_q^{13})' = [(\Delta_q^{13})^{-1} \Delta_q^{12} \Delta_q^{34}] + [(\Delta_q^{13})^{-1} \Delta_q^{14} \Delta_q^{23}] \\
& = \frac{1}{q} (\Delta_q^{13})^{-1}   \Delta_q^{12}   \Delta_q^{34} + q (\Delta_q^{13})^{-1}   \Delta_q^{14}   \Delta_q^{23}.
\end{align*}

By Lemma \ref{lem:sigma_i on commutative quantum cluster monomials},
\begin{align*}
\sigma_1(\Delta_q^{24}) & = [\sigma_1((\Delta_q^{13})^{-1}) \sigma_1(\Delta_q^{12}) \sigma_1(\Delta_q^{34})] + [\sigma_1((\Delta_q^{13})^{-1}) \sigma_1(\Delta_q^{14}) \sigma_1(\Delta_q^{23})] \\
& = [(\Delta_q^{24})^{-1} \Delta_q^{12} \Delta_q^{34}] + [(\Delta_q^{24})^{-1} \Delta_q^{12} (\Delta_q^{14})^{-1} \Delta_q^{34} \Delta_q^{12} (\Delta_q^{23})^{-1} \Delta_q^{34}] \\
& = q^{-1} (\Delta_q^{24})^{-1}   \Delta_q^{12}   \Delta_q^{34} + q (\Delta_q^{24})^{-1}   \Delta_q^{12}   ( \Delta_q^{14})^{-1}   \Delta_q^{34}   \Delta_q^{12}   ( \Delta_q^{23})^{-1}   \Delta_q^{34}.
\end{align*}

We now verify that $\sigma_1$ preserve the Pl\"{u}cker relation:
\begin{align*}
\Delta_q^{13}   \Delta_q^{24} = q^{-1} \Delta_q^{12}   \Delta_q^{34} + q  \Delta_q^{14}   \Delta_q^{23}.
\end{align*}

On the left hand side, $\sigma_1(\Delta_q^{13}) \sigma_1(\Delta_q^{24})$ equals to
\begin{align*}
& \Delta_q^{24}   (q^{-1} (\Delta_q^{24})^{-1}   \Delta_q^{12}   \Delta_q^{34} + q (\Delta_q^{24})^{-1}   \Delta_q^{12}   ( \Delta_q^{14})^{-1}   \Delta_q^{34}   \Delta_q^{12}   ( \Delta_q^{23})^{-1}   \Delta_q^{34}) \\
& = q^{-1} \Delta_q^{12}   \Delta_q^{34} + q \Delta_q^{12}   ( \Delta_q^{14})^{-1}   \Delta_q^{34}   \Delta_q^{12}   ( \Delta_q^{23})^{-1}   \Delta_q^{34}.
\end{align*}
On the right hand side, $q^{-1} \sigma_1( \Delta_q^{12} )   \sigma_1( \Delta_q^{34} ) + q  \sigma_1( \Delta_q^{14} )   \sigma_1( \Delta_q^{23} )$ equals to
\begin{align*}
& q^{-1} \Delta_q^{12}    \Delta_q^{34} + q  [\Delta_{q}^{12} (\Delta_{q}^{14})^{-1} \Delta_{q}^{34}]   [\Delta_{q}^{12} (\Delta_{q}^{23})^{-1} \Delta_{q}^{34}] \\
& = q^{-1} \Delta_q^{12}    \Delta_q^{34} + q  \Delta_{q}^{12}   (\Delta_{q}^{14})^{-1}   \Delta_{q}^{34}   \Delta_{q}^{12}   (\Delta_{q}^{23})^{-1}   \Delta_{q}^{34}.
\end{align*}

Therefore
\begin{align*}
\sigma_1( \Delta_q^{13} )   \sigma_1(\Delta_q^{24}) = q^{-1} \sigma_1( \Delta_q^{12} )   \sigma_1( \Delta_q^{34} ) + q  \sigma_1( \Delta_q^{14} )   \sigma_1( \Delta_q^{23} ).
\end{align*}

We now check that the relation $\Delta_q^{14}   \Delta_q^{23} = \Delta_q^{23}   \Delta_q^{14}$ is preserved by $\sigma_1$. On the left hand side,
\begin{align*}
\sigma_1(\Delta_q^{14}) \sigma_1(\Delta_q^{23}) & = [\Delta_{q}^{12} (\Delta_{q}^{14})^{-1} \Delta_{q}^{34} ]   [\Delta_{q}^{12} (\Delta_{q}^{23})^{-1} \Delta_{q}^{34} ] \\
& = \Delta_{q}^{12}   (\Delta_{q}^{14})^{-1}   \Delta_{q}^{34}   \Delta_{q}^{12}   (\Delta_{q}^{23})^{-1}   \Delta_{q}^{34} \\
& =  [\Delta_{q}^{12} (\Delta_{q}^{14})^{-1} \Delta_{q}^{34} \Delta_{q}^{12} (\Delta_{q}^{23})^{-1} \Delta_{q}^{34}].
\end{align*}
% \huang{Delete
% where the last expression is a commutative quantum cluster monomial. }
On the right hand side,
\begin{align*}
\sigma_1(\Delta_q^{23}) \sigma_1(\Delta_q^{14}) & = [\Delta_{q}^{12} (\Delta_{q}^{23})^{-1} \Delta_{q}^{34}]    [\Delta_{q}^{12} (\Delta_{q}^{14})^{-1} \Delta_{q}^{34} ]  \\
& = \Delta_{q}^{12}   (\Delta_{q}^{23})^{-1}   \Delta_{q}^{34}   \Delta_{q}^{12}   (\Delta_{q}^{14})^{-1}   \Delta_{q}^{34} \\
& =  [\Delta_{q}^{12} (\Delta_{q}^{23})^{-1} \Delta_{q}^{34} \Delta_{q}^{12} (\Delta_{q}^{14})^{-1} \Delta_{q}^{34}].
\end{align*}
% \huang{Delete where the last expression is a commutative quantum cluster monomial.}
Therefore $\sigma_1(\Delta_q^{14}) \sigma_1(\Delta_q^{23}) = \sigma_1(\Delta_q^{23}) \sigma_1(\Delta_q^{14})$.

\begin{remark}
It is shown in \cite[Section 3]{LL11} that the cycling map is not an automorphism of the quantum Grassmannian $\CC_q[\Gr(k,n)]$.

In $\CC_q[\Gr(2,4)]$, the map $\sigma_1$ is similar but different from the cycling map $\rho$. In $\CC_q[\Gr(2,4)]$, the cycling map is given by $\Delta_q^{12} \mapsto \Delta_q^{23}$, $\Delta_q^{13} \mapsto \Delta_q^{24}$, $\Delta_q^{23} \mapsto \Delta_q^{34}$, $\Delta_q^{24} \mapsto \Delta_q^{13}$, $\Delta_q^{34} \mapsto \Delta_q^{14}$, $\Delta_q^{14} \mapsto \Delta_q^{12}$.
We have that $\Delta_q^{23}   \Delta_q^{14} = \Delta_q^{14}   \Delta_q^{23}$.
Applying the cycling map to $\Delta_q^{23}, \Delta_q^{14}$, we obtain $\Delta_q^{34}, \Delta_q^{12}$ respectively. Although $\Delta_q^{23}, \Delta_q^{14}$ commute, $\Delta_q^{34}, \Delta_q^{12}$ do not commute and we have  $\Delta_q^{12}   \Delta_q^{34} = q^2 \Delta_q^{34}   \Delta_q^{12}$. No matter how we choose $a_{iji'j'}$ in the deformed cycling map $\Delta_q^{ij} \mapsto q^{a_{iji'j'}} \Delta_q^{i'j'}$, the deformed cycling map always sends the relation $\Delta_q^{23}   \Delta_q^{14} = \Delta_q^{14}   \Delta_q^{23}$ to the relation $\Delta_q^{12}   \Delta_q^{34} =  \Delta_q^{34}   \Delta_q^{12}$ which does not hold in $\CC_q[\Gr(2,4)]$.
\end{remark}

\subsection{Braid group action on $\CC_q[\Gr(3,6)]$} \label{sec:Gr36}

\begin{center}
\begin{table}
\resizebox{0.99\textwidth}{!}
{
\begin{tabular}{|c|c|c|}
\hline
variable $x$ & $\sigma_1(x)$ & $\sigma_2(x)$ \\
\hline
 $\Delta_q^{123}$ & $\Delta_q^{123}$   & $\Delta_q^{123}$ \\
 \hline
 $\Delta_q^{234}$ & $[\Delta_q^{123} (\Delta_q^{234})^{-1} \Delta_q^{345}]$    & $\Delta_q^{234}$ \\
 \hline
 $\Delta_q^{345}$ & $\Delta_q^{345}$  & $[\Delta_q^{234}(\Delta_q^{345})^{-1} \Delta_q^{456}]$ \\
 \hline
 $\Delta_q^{456}$ & $\Delta_q^{456}$  & $\Delta_q^{456}$ \\
\hline
 $\Delta_q^{126}$ & $\Delta_q^{126}$ & $[\Delta_q^{123} (\Delta_q^{126})^{-1} \Delta_q^{156}]$  \\
 \hline
 $\Delta_q^{156}$ & $[\Delta_q^{126} (\Delta_q^{156})^{-1} \Delta_q^{456}]$  & $\Delta_q^{156}$   \\
\hline
 $\Delta_q^{124}$ & $\Delta_q^{125}$  & $\Delta_q^{134}$ \\
\hline
 $\Delta_q^{125}$ & $[\Delta_q^{126} ( \Delta_q^{156} )^{-1} \Delta_q^{145}]$  & $\Delta_q^{136}$ \\
\hline
 $\Delta_q^{134}$ & $\Delta_q^{235}$ & $[\Delta_q^{145} (\Delta_q^{345})^{-1} \Delta_q^{234}]$  \\
\hline
 $\Delta_q^{135}$ & $[(\Delta_q^{156})^{-1}z]$ & $[(\Delta_q^{345})^{-1}z]$   \\
\hline
 $\Delta_q^{136}$ & $\Delta_q^{236}$ & $[\Delta_q^{123}\Delta_q^{156}\Delta_q^{245}(\Delta_q^{126})^{-1}(\Delta_q^{345})^{-1}]$ \\
\hline
 $\Delta_q^{145}$ & $\Delta_q^{245}$ &  $\Delta_q^{146}$  \\
\hline
$\Delta_q^{146}$ & $\Delta_q^{256}$ &  $[\Delta_q^{124}(\Delta_q^{126})^{-1}\Delta_q^{156}]$  \\
\hline
$\Delta_q^{235}$ & $[\Delta_q^{123}\Delta_q^{146}(\Delta_q^{156})^{-1}(\Delta_q^{234})^{-1}\Delta_q^{345}]$ & $\Delta_q^{236}$  \\
\hline
 $\Delta_q^{236}$ & $[\Delta_q^{123}(\Delta_q^{234})^{-1}\Delta_q^{346}]$ & $[\Delta_q^{123}(\Delta_q^{126})^{-1}\Delta_q^{256}]$  \\
\hline
$\Delta_q^{245}$ & $[\Delta_q^{124} (\Delta_q^{234})^{-1} \Delta_q^{345}]$ &  $\Delta_q^{346}$  \\
\hline
 $\Delta_q^{246}$ & $[(\Delta_q^{234})^{-1}y]$ & $[(\Delta_q^{126})^{-1}y]$  \\
\hline
 $\Delta_q^{256}$ & $[\Delta_q^{126} \Delta_q^{134} \Delta_q^{456} (\Delta_q^{156})^{-1} (\Delta_q^{234})^{-1}] $  & $\Delta_q^{356}$ \\
\hline
 $\Delta_q^{346}$ & $\Delta_q^{356}$ & $[\Delta_q^{125}(\Delta_q^{126})^{-1}\Delta_q^{234}(\Delta_q^{345})^{-1}\Delta_q^{456}]$  \\
\hline
 $\Delta_q^{356}$ & $[\Delta_q^{136}(\Delta_q^{156})^{-1}\Delta_q^{456}]$ & $[\Delta_q^{235} (\Delta_q^{345})^{-1} \Delta_q^{456}]$  \\
\hline
$y$ & $[\Delta_q^{126} \Delta_q^{135} (\Delta_q^{156})^{-1} \Delta_q^{456}] $ &    $[\Delta_q^{135} \Delta_q^{234} (\Delta_q^{345})^{-1} \Delta_q^{456}]$ \\
\hline
$z$ & $[\Delta_q^{123}(\Delta_q^{234})^{-1}\Delta_q^{246}\Delta_q^{345}]$ & $[\Delta_q^{123} (\Delta_q^{126})^{-1} \Delta_q^{156} \Delta_q^{246}]$ \\
\hline
\end{tabular}
}
\caption{Braid group action on quantum cluster variables in $\mathbb{C}_q[\Gr(3,6)]$. 
Here $y=q^{-\frac{3}{2}}( \Delta_q^{124} \Delta_q^{356} - \frac{1}{q} \Delta_q^{123} \Delta_q^{456} )$ and $z=q^{-\frac{1}{2}}( \Delta_q^{145} \Delta_q^{236} - \frac{1}{q^2} \Delta_q^{123} \Delta_q^{456} )$.}
\label{table:braid group action on Gr36}
\end{table}
\end{center}

We list the images of quantum cluster variables of $\CC_q[\Gr(3,6)]$ under the maps $\sigma_i$, $i=1,2$, in Table \ref{table:braid group action on Gr36}.

We now check that quantum Pl\"{u}cker relations are preserved by $\sigma_1, \sigma_2$. We check one example and others are similar. Consider the quantum Pl\"{u}cker relation
\begin{align} \label{eq:plucker 124 136}
\Delta_q^{124}   \Delta_q^{136} = \frac{1}{q} \Delta_q^{123}   \Delta_q^{146} + q \Delta_q^{126}   \Delta_q^{134}.
\end{align}

We have that
\begin{align*}
\Delta_q^{136} & = [(\Delta_q^{124})^{-1} \Delta_q^{123} \Delta_q^{146}] + [(\Delta_q^{124})^{-1} \Delta_q^{126} \Delta_q^{134}],
\end{align*}
and
\begin{align*}
\sigma_1( \Delta_q^{136} ) & = [\sigma_1( ( \Delta_q^{124})^{-1} ) \sigma_1( \Delta_q^{123} ) \sigma_1( \Delta_q^{146})] + [\sigma_1( ( \Delta_q^{124})^{-1} ) \sigma_1( \Delta_q^{126} ) \sigma_1( \Delta_q^{134} )] \\
& =  [(\Delta_q^{125})^{-1} \Delta_q^{123} \Delta_q^{256}] + [(\Delta_q^{125})^{-1} \Delta_q^{126} \Delta_q^{235}] \\
& = \frac{1}{q} (\Delta_q^{125})^{-1}   \Delta_q^{123}   \Delta_q^{256} + q (\Delta_q^{125})^{-1}   \Delta_q^{126}   \Delta_q^{235}.
\end{align*}
On the left hand side,
\begin{align*}
\sigma_1( \Delta_q^{124})   \sigma_1( \Delta_q^{136} ) & = \Delta_q^{125}   (\frac{1}{q} (\Delta_q^{125})^{-1}   \Delta_q^{123}   \Delta_q^{256} + q (\Delta_q^{125})^{-1}   \Delta_q^{126}   \Delta_q^{235}) \\
& = \frac{1}{q}  \Delta_q^{123}   \Delta_q^{256} + q  \Delta_q^{126}   \Delta_q^{235}.
\end{align*}
On the right hand side,
\begin{align*}
\frac{1}{q} \sigma_1( \Delta_q^{123})   \sigma_1( \Delta_q^{146}) + q \sigma_1( \Delta_q^{126} )   \sigma_1( \Delta_q^{134} ) = \frac{1}{q} \Delta_q^{123}   \Delta_q^{256} + q \Delta_q^{126}   \Delta_q^{235}.
\end{align*}
Therefore $\sigma_1$ preserves the Pl\"{u}cker relation (\ref{eq:plucker 124 136}).

We now verify that $\sigma_1$ and $\sigma_2$ satisfy the braid relation
$\sigma_1\sigma_2\sigma_1=\sigma_2\sigma_1\sigma_2$. Consider $\sigma_1\sigma_2\sigma_1(\Delta_q^{145})$ and $\sigma_2\sigma_1\sigma_2(\Delta_q^{145})$. We have that
\begin{align*}
& \sigma_1\sigma_2\sigma_1(\Delta_q^{145})=\sigma_1\sigma_2(\Delta_q^{245})=\sigma_1(\Delta_q^{346})=\Delta_q^{356}, 
\\
& 
\sigma_2\sigma_1\sigma_2(\Delta_q^{145})=\sigma_2\sigma_1(\Delta_q^{146})=\sigma_2(\Delta_q^{256})=\Delta_q^{356}.
\end{align*}
Similarly, we have $\sigma_1\sigma_2\sigma_1(x_i)=\sigma_2\sigma_1\sigma_2(x_i)$ for other quantum cluster variables $x_i$ in the initial cluster.

\section{Proof of Theorem \ref{thm: braids are QQH}} \label{sec:prove main theorem}

In this section, we prove our main result Theorem \ref{thm: braids are QQH}.

\subsection{Proof of (i) of Theorem \ref{thm: braids are QQH}} \label{subsec:proof (i) of theorem braids are quasi-homomorphism}

By Lemma \ref{lem:sigmai and sigmai inverse are algebra automorphisms}, the maps $\sigma_i$ and $\sigma_i^{-1}$ are algebra automorphisms of the quantum cluster algebra $\CC_q[\Gr(k,n)]$.
 
By the definition of $\sigma_i$, the matrix associated to $\sigma_i$ is $R=\left(
      \begin{array}{cc}
        I_{n\times n} & 0 \\
        0 & L \\
      \end{array}
    \right)$, where $L=(l_{jk})$ is an $n\times n$ matrix with entries
\begin{equation}
\label{eq:M'}
l_{jk} =
\begin{cases}
1,  & \text{if $j=l \not \equiv i+1, j+1=l \equiv i+1, ~or~ j-1=l \equiv i+1,$} \\
-1, & \text{if $j=l \equiv i+1,$} \\
0, & \text{otherwise.}
\end{cases}
\end{equation}
It is not hard to see that $L^{-1}=L$ and $R^{-1}=R$. Here we place the frozen variables, i.e., consecutive quantum Pl\"{u}cker coordinates, in the order of $[1,k], [2,k+1],\ldots,[n-k+1,n],[n-k+2,n+1],\ldots, [n,n+k-1]$, where the addition is taken in $\mathbb Z_n$.
 
It is clear that by the definition, the maps $\sigma_i$ and $\sigma_i^{-1}$ satisfies the condition $(2)$ in Proposition \ref{prop:equ-quasi-homo}. On the other hand, the conditions $(1)$ and $(3)$ in Proposition \ref{prop:equ-quasi-homo} follow from \cite[Lemma 5.11]{Fra17}. So to prove Theorem \ref{thm: braids are QQH} (i) we only need to check the condition (4) in Proposition \ref{prop:equ-quasi-homo}, that is, $R^t\overline{\Lambda}R=\Lambda$. This follows from 
Lemma \ref{lem:sigma_i preserves quasicommutative relations}. 

\subsection{Proof of (ii) of Theorem \ref{thm: braids are QQH}} \label{subsec:proof (ii) of theorem braids are quasi-homomorphism}

Recall that in Section \ref{subsec:the bijection varphi between cluster monomials and quantum cluster monomials}, we defined a map $\varphi$ sending a cluster Laurent monomial in $\CC[\Gr(k,n)]$ to a quantum cluster Laurent monomial in $\CC_q[\Gr(k,n)]$. Every quantum cluster variable in $\CC_q[\Gr(k,n)]$ is of the form $\varphi(x)$, where $x$ is a cluster variable in $\CC[\Gr(k,n)]$. By Lemma \ref{lem:sigma_i and varphi commute}, we have that for every $i$ and every cluster Laurent monomial $x$ ($x$ can contain frozen variables or inverses of frozen variables) in $\CC[\Gr(k,n)]$, $\sigma_i(\varphi(x))=\varphi(\sigma_i(x))$. In particular, for any Pl\"{u}cker coordinate $P$, we have that $\sigma_i(\varphi(P))=\varphi(\sigma_i(P))$. It is shown in \cite{Fra17} that for $|i-j|=1$, $\sigma_i \sigma_j \sigma_i(P) = \sigma_j \sigma_i \sigma_j(P)$. Therefore
\begin{align*}
\sigma_i \sigma_j \sigma_i(\varphi(P)) = \sigma_i \sigma_j (\varphi(\sigma_i(P))) = \varphi( \sigma_i \sigma_j \sigma_i(P)  )= \varphi(\sigma_j \sigma_i \sigma_j(P)) = \sigma_j \sigma_i \sigma_j(\varphi(P)).
\end{align*}

To show that $\sigma_i \sigma_j \sigma_i = \sigma_j \sigma_i \sigma_j$ on $\mathbb{C}_q[\Gr(k,n)]$, it suffices to shown that $\sigma_i \sigma_j \sigma_i(x_1 \cdots  x_r)=\sigma_j \sigma_i \sigma_j(x_1 \cdots  x_r)$, where $x_i$'s are quantum Pl\"{u}cker coordinates. This follows from the fact that $\sigma_i$, $\sigma_j$ are algebra automorphisms on $\mathbb{C}_q[\Gr(k,n)]$ and $\sigma_i \sigma_j \sigma_i(x_i)=\sigma_j \sigma_i \sigma_j(x_i)$ for quantum Pl\"{u}cker coordinates $x_i$.

The proof of $\sigma_i(\sigma_j(x)) = \sigma_j(\sigma_i(x))$ for every $x \in \mathbb{C}_q[\Gr(k,n)]$, $|i-j|>1$, is similar.

\end{document}